\documentclass[12pt]{article}

\usepackage{a4wide}
\usepackage{graphicx}
\usepackage{amsmath}
\usepackage{amssymb}
\usepackage{amsthm}
\usepackage{enumerate}

\newtheorem{theorem}{Theorem}[section]
\newtheorem{definition}[theorem]{Definition}

\newtheorem{lemma}[theorem]{Lemma}
\newtheorem{claim}[theorem]{Claim}
\newtheorem{corollary}[theorem]{Corollary}
\newtheorem{conjecture}[theorem]{Conjecture}
\newtheorem{construction}[theorem]{Construction}
\usepackage[margin=20pt,small,bf]{caption}


\begin{document}

\title{Partitioning edge-coloured complete graphs into monochromatic cycles and paths}

\author{\large{Alexey Pokrovskiy} \thanks{Research supported by the LSE postgraduate research studentship scheme.} 
\\
\\Departement of Mathematics,
\\ {London School of Economics and Political Sciences,} 
\\ London WC2A 2AE, UK 
\\ {Email: \texttt{a.pokrovskiy@lse.ac.uk}}}

\maketitle

\begin{abstract}
A conjecture of Erd\H{o}s, Gy\'arf\'as, and Pyber says that in any edge-colouring of a~complete graph with $r$ colours, it is possible to cover all the vertices with $r$ vertex-disjoint monochromatic cycles.  So far, this conjecture has been proven only for $r = 2$.  In this paper we show that in fact this conjecture is false for all $r\geq 3$.  In contrast to this, we show that in any edge-colouring of a complete graph with three colours, it is possible to cover all the vertices with three vertex-disjoint monochromatic \emph{paths}, proving a particular case of a conjecture due to Gy\'arf\'as.  As an intermediate result we show that in any edge-colouring of the complete graph with the colours red and blue, it is possible to cover all the vertices with a red path, and a disjoint blue balanced complete bipartite graph.
\end{abstract}
\section{Introduction}
Suppose that the edges of the complete graph on $n$ vertices, $K_n$, are coloured with $r$ colours.  How many vertex-disjoint monochromatic paths are needed to cover all the vertices of $K_n$~?  Gerencs\'er and Gy\'arf\'as~\cite{Gerencser} showed that when $r=2$, this can always be done with at most two monochromatic paths.  For $r>2$, Gy\'arf\'as made the following conjecture.

\begin{conjecture} [Gy\'arf\'as,~\cite{Gyarfas}] \label{Gyarfas}  
The vertices of every $r$-edge coloured complete graph can be covered with $r$ vertex-disjoint monochromatic paths.
\end{conjecture}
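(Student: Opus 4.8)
The plan is to prove the conjecture by induction on $r$, using the intermediate result announced in the abstract as an engine that peels off one colour at a time, and to run the induction simultaneously with a bipartite companion statement. For $r \ge 1$ let $f(r)$ be the least number of vertex-disjoint monochromatic paths needed to cover any $r$-edge-coloured complete graph, and let $g(r)$ be the analogous quantity for $r$-edge-coloured \emph{balanced} complete bipartite graphs $K_{m,m}$. The conjecture is the assertion $f(r) \le r$, and the strategy is to establish the two recursions
\[
 f(r) \le 1 + g(r-1), \qquad g(r) \le 1 + g(r-1),
\]
together with the base case $g(1) = 1$. Since a monochromatic balanced complete bipartite graph has a Hamilton path, $g(1)=1$ is immediate; the two recursions then give $g(r) \le r$ and hence $f(r) \le 1 + (r-1) = r$ for every $r$, with the case $r=2$ recovering the Gerencsér--Gyárfás bound as a consistency check.

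The first recursion $f(r) \le 1 + g(r-1)$ is exactly what the intermediate result buys us. Given an $r$-edge-coloured $K_n$, I would amalgamate colours $2,\dots,r$ into a single ``blue'' colour while keeping colour $1$ as ``red'', and then apply the intermediate result to this two-colouring. This produces a red path $P$ and a disjoint blue balanced complete bipartite graph $B = K_{m,m}$ covering all of $V(K_n)$. The point is that $P$, being a path in colour $1$, is genuinely monochromatic in the \emph{original} colouring, while every edge of $B$ carries one of the original colours $2,\dots,r$; thus $B$ is a balanced complete bipartite graph edge-coloured with only $r-1$ colours. Covering $B$ by $g(r-1)$ monochromatic paths and adjoining $P$ then covers $K_n$ with $1+g(r-1)$ paths.

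The second recursion $g(r) \le 1 + g(r-1)$ requires a bipartite analogue of the intermediate result: \emph{in any red/blue colouring of a balanced complete bipartite graph, the vertices can be covered by a red path together with a disjoint blue balanced complete bipartite subgraph.} Granting this, the identical amalgamation argument (merge colours $2,\dots,r$ into blue, extract a colour-$1$ path and a blue balanced complete bipartite remainder coloured with $r-1$ colours, recurse) yields $g(r) \le 1 + g(r-1)$. So the whole argument reduces to two structural lemmas of the same flavour: the complete-graph intermediate result and its bipartite counterpart.

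The hard part, and the step on which everything hinges, is the bipartite counterpart, and within it the requirement that the extracted blue complete bipartite graph be \emph{balanced}. Balance is not cosmetic: the base case $g(1)=1$ relies on a monochromatic complete bipartite graph having a Hamilton path, and this fails once the two sides differ in size, since a monochromatic $K_{s,t}$ with $s > t+1$ cannot be covered by a single path. The obstacle is that a red path in a bipartite host alternates between the two sides and may consume them unevenly, so the blue remainder will in general be unbalanced unless the red path is chosen with considerable care. I therefore expect the real difficulty to concentrate on choosing the red path so that the surviving blue complete bipartite graph has equal (or nearly equal) sides, and on arguing that near-balance can be maintained and absorbed through successive applications of the recursion; this is presumably the precise point at which the argument resists routine extension to large $r$.
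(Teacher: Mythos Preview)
The statement is an open conjecture, not a theorem; the paper does not prove it in general but only settles the case $r=3$ (Theorem~\ref{threepaths}), and does so by a direct structural analysis rather than by the inductive scheme you propose. Your first recursion $f(r)\le 1+g(r-1)$ is correct and is precisely what Lemma~\ref{path-bipartitecoloured} delivers; the paper itself observes that combining Lemma~\ref{path-bipartitecoloured} with Corollary~\ref{bipartitepartitionsimple} yields only $f(3)\le 4$, one path short of the target.

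The fatal gap is that the bipartite analogue you postulate is simply false, not merely delicate. Take any split colouring of $K_{n,n}$: the red components are $X_1\cup Y_2$ and $X_2\cup Y_1$, while the blue edges form $B(X_1,Y_1)\cup B(X_2,Y_2)$. A red path lies inside one red component and a blue complete bipartite subgraph lies inside one of $B(X_1,Y_1)$ or $B(X_2,Y_2)$; whichever pair you pick, their union omits one of the four nonempty classes entirely. So no red path together with a blue complete bipartite graph (balanced or not) can cover a split $K_{n,n}$. Consequently your recursion $g(r)\le 1+g(r-1)$ already fails at $r=2$: Theorem~\ref{bipartitepartition} and the remark following it show that suitable split colourings force $g(2)=3$, not $g(2)\le 2$. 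Worse, the paper constructs $r$-coloured balanced complete bipartite graphs needing $2r-1$ monochromatic paths and conjectures $g(r)=2r-1$ (Conjecture~\ref{bipartiteconjecture}); plugging this into your valid first recursion gives only $f(r)\le 2r-1$, never $r$, for $r\ge 3$. The balance issue you flag at the end is therefore not a technical wrinkle to be smoothed over but an outright obstruction: the bipartite peeling lemma you need does not exist.
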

Subsequently, Erd\H{o}s, Gy\'arf\'as, and Pyber made the following stronger conjecture.
\begin{conjecture} [Erd\H{o}s, Gy\'arf\'as \& Pyber, \cite{Erdos}] \label{Erdos}
The vertices of every $r$-edge coloured complete graph can be covered with $r$ vertex-disjoint monochromatic cycles.
\end{conjecture}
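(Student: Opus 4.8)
The plan is to establish the asymptotic form of the statement through the regularity method, reformulate it as a purely combinatorial question about covering by \emph{monochromatic connected matchings}, and attack that question by induction on the number of colours $r$.

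First I would apply Szemer\'edi's regularity lemma to the $r$-coloured $K_n$ (taking $n$ large), partitioning the vertices into a bounded number of equal clusters together with a small exceptional set. Colouring each pair of clusters by the colour that dominates on the edges between them produces a \emph{reduced graph} $R$: an $r$-edge-coloured graph that is itself almost complete on the clusters, whose dense $\varepsilon$-regular pairs faithfully encode the original colours. The standard connected-matching machinery then guarantees that a monochromatic connected matching of colour $i$ in $R$ lifts to a long monochromatic cycle of colour $i$ in $K_n$ spanning essentially all vertices of the matched clusters: one builds the cycle through the regular pairs of the matching, using the minimum-degree guarantee of regularity to keep extending and finally to close it up into a cycle. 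Thus a covering of the clusters of $R$ by $r$ monochromatic connected matchings yields a covering of almost all of $K_n$ by $r$ monochromatic cycles.

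The heart of the matter then becomes the following claim: \emph{the vertex set of any $r$-edge-coloured (almost) complete graph can be partitioned into at most $r$ monochromatic connected matchings.} I would prove this by induction on $r$. For the base case $r=2$ I would invoke the theorem of Gerencs\'er and Gy\'arf\'as, since two monochromatic paths in the complete reduced graph are in particular contained in two monochromatic connected subgraphs and hence give two monochromatic connected matchings. For the inductive step I would fix a colour, say colour $r$, select a connected component $C$ of the colour-$r$ graph carrying a largest possible connected matching $M_r$, remove the matched vertices, and try to show that the remainder can be handled using only $r-1$ colours, so that the induction hypothesis supplies $r-1$ further matchings. The exceptional set and the bounded leftover from the regularity partition would be absorbed into a constant number of extra short cycles, harmless for the asymptotic statement; combined with the absorption method (reserving a reservoir of vertices in advance to close paths into cycles and to mop up the remainder) this should in principle be sharpened to an exact covering for all $n$.

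The main obstacle is precisely the inductive step. After peeling off a maximum colour-$r$ connected matching, one must argue that colour $r$ is genuinely no longer \emph{needed} on the surviving vertices, i.e.\ that the residual colouring is reducible to $r-1$ colours without incurring an extra matching. Maximality of $M_r$ forces the unmatched vertices to have very constrained colour-$r$ adjacencies — any colour-$r$ edge among them, or suitably into $C$, would enlarge $M_r$ — but converting ``constrained'' into ``eliminable'' is the delicate point. I expect a careful analysis to get stuck exactly here: the clean reduction from $r$ to $r-1$ matchings may simply be unavailable, so that the honest cost of the induction is $r$ rather than $r-1$ new matchings. This is the step on which the whole argument stands or falls, and its failure would indicate that the number of monochromatic cycles one can truly guarantee can strictly exceed $r$.
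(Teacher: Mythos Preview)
The statement you are trying to prove is a \emph{conjecture}, and the paper does not prove it --- it \emph{disproves} it. Theorem~\ref{counterexampletheorem} constructs, for every $r\geq 3$, infinitely many $r$-edge-coloured complete graphs that cannot be partitioned into $r$ monochromatic cycles. So any attempt to establish the conjecture, asymptotic or exact, is doomed from the outset: the inductive step you flag as ``the step on which the whole argument stands or falls'' does indeed fall, and not for a technical reason but because the conclusion is false.

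Your closing remark is therefore the correct one, but you should have followed it rather than the regularity programme. The paper's approach is the opposite of yours: it builds explicit colourings $H_r^m$ (starting from a carefully sized $4$-class colouring for $r=3$ and recursing for larger $r$) that cannot be partitioned into $r$ monochromatic paths of distinct colours, nor into $r-1$ monochromatic paths at all; it then adjoins $r$ extra vertices $v_1,\dots,v_r$ so that any $r$ disjoint monochromatic cycles, restricted to $H_r^m$, become $r$ paths forced by Lemma~\ref{blowuplemma} to repeat a colour, leaving some $v_i$ uncovered. The regularity/connected-matching machinery you describe is exactly what underlies the \emph{approximate} positive results (Theorem~\ref{threecycles} and the $O(r\log r)$ bound), and the paper's counterexamples are consistent with those: they miss only one vertex. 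What your outline would actually support is Conjecture~\ref{approximate}, not Conjecture~\ref{Erdos}.
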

When dealing with these conjectures, the empty set, a single vertex, and a single edge between two vertices are considered to be paths and cycles. 
It is worth noting that neither of the above conjectures require the monochromatic paths covering $K_n$ to have distinct colours. Whenever a graph $G$ is covered by vertex-disjoint subgraphs $H_1, H_2, \dots, H_k$, we say that $H_1, H_2, \dots, H_k$ \emph{partition} $G$.

  Most effort has focused on Conjecture~\ref{Erdos}.  It was shown in~\cite{Erdos} that there is a function~$f(r)$ such that, for all $n$, any $r$-edge coloured $K_n$ can be partitioned into $f(r)$ monochromatic cycles.
The best known upper bound for $f(r)$ is due to Gy\'arf\'as, Ruszink\'o, S\'ark\"ozy, and Szemer\'edi~\cite{Szemeredi3} who show that, for large $n$, $100r \log{r}$ monochromatic cycles are sufficient to partition the vertices of an $r$-edge coloured $K_n$.

For small $r$, there has been more progress.  The case $r=2$ of Conjecture \ref{Erdos} is closely related to Lehel's Conjecture, which says that any 2-edge coloured complete graph can be partitioned into two monochromatic cycles \emph{with different colours}.  This conjecture first appeared in~\cite{Ayel} where it was proved for some special types of colourings of $K_n$.  Gy\'arf\'as~\cite{Gyarfas2} showed that the vertices of a $2$-edge coloured complete graph can be covered by two monochromatic cycles with different colours intersecting in at most one vertex.  
\L{uczak}, R\"odl, and Szemer\'edi~\cite{Szemeredi2} showed, using the Regularity Lemma, that Lehel's Conjecture holds for $r=2$ for large~$n$.  Later, Allen~\cite{Allen} gave an alternative proof that works for smaller (but still large) $n$, and which avoids the use of the Regularity Lemma.
Lehel's Conjecture was finally shown to be true for all $n$ by Bessy and Thomass\'e~\cite{Thomasse}, using a short, elegant argument.

For $r=3$, Gy\'arf\'as, Ruszink\'o, S\'ark\"ozy, and Szemer\'edi proved the following theorem.
\begin{theorem} [Gy\'arf\'as, Ruszink\'o, S\'ark\"ozy \& Szemer\'edi, ~\cite{Szemeredi1}] \label{threecycles}
  
Suppose that the edges of $K_n$ are coloured with three colours.  There are three vertex-disjoint monochromatic cycles covering all but $o(n)$ vertices in $K_n$.
\end{theorem}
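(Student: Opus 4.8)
The plan is to prove Theorem~\ref{threecycles} by the \emph{regularity method}, reducing the search for three large monochromatic cycles — via \L uczak's connected-matching trick — to a purely combinatorial statement about $3$-edge-coloured complete graphs. First I would apply Szemer\'edi's Regularity Lemma to the $3$-edge-coloured $K_n$, producing an equipartition $V_0 \cup V_1 \cup \dots \cup V_m$ in which all three colour classes are simultaneously $\varepsilon$-regular between almost all pairs $(V_i,V_j)$. Form the \emph{reduced graph} $R$ on $\{1,\dots,m\}$, placing an edge of colour $c$ between $i$ and $j$ whenever the colour-$c$ bipartite graph between $V_i$ and $V_j$ is $\varepsilon$-regular with density at least, say, $1/4$. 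Since $K_n$ is complete, between almost every pair of clusters some colour has density at least $1/3$ and is regular, so $R$ is a $3$-edge-coloured graph on $m$ vertices in which all but $o(m^2)$ pairs carry at least one edge; deleting $o(m)$ exceptional clusters — costing only $o(n)$ vertices of $K_n$ — we may assume $R$ is a genuine $3$-edge-coloured $K_m$.

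Next I would establish the standard ``blow-up'' lemma: if $M$ is a monochromatic \emph{connected matching} of colour $c$ in $R$ (a matching all of whose edges lie in a single connected component of the colour-$c$ graph of $R$), then the union of the clusters met by $M$ contains a colour-$c$ cycle covering all but an $o(1)$-fraction of each of those clusters. One strings the matching edges together using short monochromatic paths through connecting clusters inside the component, uses $\varepsilon$-regularity to greedily absorb almost all remaining vertices of each involved cluster, and finally closes the resulting path into a cycle. Consequently, $k$ vertex-disjoint monochromatic connected matchings in $R$ that cover all but $o(m)$ of its vertices yield $k$ vertex-disjoint monochromatic cycles in $K_n$ covering all but $o(n)$ vertices. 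Taking $k=3$, it suffices to prove the combinatorial statement: the vertices of every $3$-edge-coloured $K_m$ can be covered, up to $o(m)$ vertices, by three vertex-disjoint monochromatic connected matchings.

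This last statement is the combinatorial heart of the argument, and I expect it to be the main obstacle. By a theorem of Gy\'arf\'as, some colour — say red — has a connected component $C$ with $|C| \ge m/2$. Let $M_1$ be a maximum red matching inside $C$. If $M_1$ is near-perfect on $C$, then one red connected matching already covers $\ge m/2 - o(m)$ vertices, and we recurse on $K_m \setminus V(M_1)$, where only two more connected matchings are needed; the two-colour base case is easy, since a $2$-edge-coloured complete graph splits into a red and a blue connected matching covering all but $o(1)$ vertices (the connected-matching version of Gerencs\'er--Gy\'arf\'as). If instead $M_1$ leaves a large unmatched set $U \subseteq C$, then $U$ spans no red edge inside $C$, which forces rigid structure and again pushes us toward a two-colour situation. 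The genuine difficulty lies in the regime where no colour dominates and the colour classes partition $K_m$ into several structured pieces — configurations resembling the affine plane $AG(2,3)$. Here I would run a stability argument: either the colouring is far from every extremal configuration, so large monochromatic components with large matchings are readily extracted, or it is close to one of the few extremal configurations, each of which is checked by hand to admit three connected matchings. Organising this case analysis cleanly is the crux.

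Finally, I would assemble the proof: translate the three connected matchings from the previous step back through the blow-up lemma into three vertex-disjoint monochromatic cycles in $K_n$, and tally the discarded vertices — the exceptional set $V_0$, vertices incident to irregular pairs, the $o(m)$ clusters left uncovered in $R$, and the $o(1)$-fraction left inside each used cluster — all of which sum to $o(n)$, completing the proof of Theorem~\ref{threecycles}.
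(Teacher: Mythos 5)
First, note that Theorem~\ref{threecycles} is stated in this paper only as a quoted result of Gy\'arf\'as, Ruszink\'o, S\'ark\"ozy and Szemer\'edi~\cite{Szemeredi1}; no proof of it appears here, so there is no internal argument to compare yours against. Your outline does follow the strategy of the cited source: Regularity Lemma, reduced graph, \L uczak's reduction of almost-spanning monochromatic cycles to monochromatic connected matchings, and then a combinatorial statement about $3$-coloured complete graphs. That part of the plan is sound and standard.

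The genuine gap is that the combinatorial heart --- that every $3$-edge-coloured $K_m$ contains three vertex-disjoint monochromatic connected matchings covering all but $o(m)$ vertices --- is not proved; it is only gestured at, and the gesture contains a step that does not work as stated. After extracting a near-perfect red connected matching $M_1$ on a large red component $C$, the graph $K_m\setminus V(M_1)$ is still $3$-coloured, so you cannot invoke the two-colour base case (a red and a blue connected matching covering all but one vertex) to finish with two more matchings; two monochromatic connected matchings do not in general cover almost all of a $3$-coloured complete graph, as $4$-partite-type colourings show. Likewise, ``$U$ spans no red edge inside $C$, which forces rigid structure'' and ``run a stability argument against the extremal configurations'' are placeholders for what is, in~\cite{Szemeredi1}, a long and delicate case analysis constituting essentially the entire difficulty of the theorem. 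As it stands your text is a correct proof \emph{scheme} with the hardest lemma left unproved, so it cannot be accepted as a proof of Theorem~\ref{threecycles}.
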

In~\cite{Szemeredi1}, it is also shown that, for large $n$, $17$ monochromatic cycles are sufficient to partition \emph{all} the vertices of every 3-edge coloured $K_n$. 

Despite Theorem~\ref{threecycles} being an approximate version of the case $r=3$ of Conjecture~\ref{Erdos}, the conjecture turns out to be false for all $r\geq 3$.
We prove the following theorem in Section~2.
\begin{theorem}\label{counterexampletheorem}
 Suppose that $r\geq 3$.  There exist infinitely many $r$-edge coloured complete graphs which cannot be vertex-partitioned into $r$ monochromatic cycles.
\end{theorem}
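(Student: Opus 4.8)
The plan is to refute Conjecture~\ref{Erdos} by exhibiting, for each $r\ge 3$, explicit $r$-edge-colourings of complete graphs for which a careful structural argument shows that $r$ vertex-disjoint monochromatic cycles always leave a vertex uncovered. I would treat $r=3$ as the base case and pass to larger $r$ by adjoining fresh colours. Since every $r$-colouring of a large $K_n$ contains a large monochromatic component, one cannot make \emph{every} colour class cycle-poor by a crude counting bound, so the colouring has to be delicate and the lower bound has to come from an exact case analysis rather than an inequality: the construction should ``cost'' the partition exactly one extra cycle beyond what $r$ cycles can deliver. (It must also remain coverable by $r$ monochromatic \emph{paths}, consistent with the rest of the paper, which hints that the real obstruction is precisely the inability of the relevant long monochromatic paths to close into cycles without being split.)

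For $r=3$ the idea is to choose the colouring so that the monochromatic components, and the monochromatic cycles they support, are completely understood. Concretely I would aim for a colouring in which, for each colour $i$, every monochromatic cycle of length $\ge 3$ in colour $i$ is forced to lie inside one prescribed vertex class $Q_i$ (or some other explicitly described ``cyclic part''), while the remainder of colour $i$ is tree-like or star-like, so that outside $Q_i$ colour $i$ contributes only cycles on at most two vertices. One then assumes a partition into three monochromatic cycles $C_1,C_2,C_3$, classifies each $C_j$ as ``long'' (contained in some $Q_i$) or ``short'' (at most two vertices), and checks in each of the few resulting cases that the special and leftover vertices cannot all be absorbed — typically because a parity or size imbalance built into the construction forces the long cycles to fall one vertex short, with too few short cycles left over to compensate. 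Pinning down a colouring for which this case analysis actually closes, and which is robust in the sense that deleting a bounded number of vertices does not repair it, is the main obstacle.

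Given such a base colouring $G$ that is robustly bad — $G-X$ cannot be partitioned into $r$ monochromatic cycles for every small vertex set $X$ — I would build a bad $(r+1)$-colouring $G'$ by adding a large clique $C$ in a brand-new colour $r+1$ and colouring every edge between $C$ and $V(G)$ with colour $r+1$ as well. Then any monochromatic cycle in colour $r+1$ lies inside a clique of that colour, so it meets $V(G)$ in at most one vertex; deleting such a cycle from a hypothetical partition of $G'$ into $r+1$ monochromatic cycles leaves $G$ minus at most one vertex to be covered by at most $r$ cycles, contradicting robustness. The same computation, applied after first deleting a vertex of $C$ or of $V(G)$, shows $G'$ is again robustly bad, so the construction iterates cleanly. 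Finally, infinitely many examples for each fixed $r$ are obtained from the free size parameters of the construction — the class sizes in the base $3$-colouring, and the clique $C$ adjoined at each step, may all be taken arbitrarily large — so every $r\ge 3$ yields a bad $r$-edge-coloured $K_n$ for infinitely many values of $n$.
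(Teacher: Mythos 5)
Your high-level skeleton -- a robustly bad base colouring for $r=3$ (bad even after deleting a bounded set of vertices), followed by an induction that adjoins a fresh colour, with the parameter sizes giving infinitely many examples -- is exactly the architecture of the actual proof. But both halves of your argument have genuine gaps. First, the base case is not constructed: you list desiderata (every long cycle of colour $i$ confined to a prescribed class $Q_i$, the rest of colour $i$ tree-like) and explicitly concede you do not know how to realise them. That structure is almost certainly unachievable in a $3$-colouring of $K_n$ (some colour class has $\ge\binom{n}{2}/3$ edges and cannot be tree-like outside a single class without that class being enormous), and it is not how the real obstruction works. The actual counterexample is a $4$-partite colouring with classes of sizes $10m,13m,7m,13m$ in which every colour has long cycles everywhere; the obstruction is a counting argument via an independent-set lemma (a class $B$ containing no colour-$i$ edges forces any colour-$i$ path to alternate, so it covers at most $|N|+1$ vertices of $B$ where $N$ is the other side of that colour component, and the sizes are rigged so $|B|>|N|+1$), combined with $r$ extra apex vertices $v_1,\dots,v_r$ whose mutual edges are coloured so that no monochromatic cycle can use both an edge inside $\{v_1,\dots,v_r\}$ and a vertex of the main body -- this is the device that converts ``no partition into $r$ paths of distinct colours'' into ``no partition into $r$ cycles,'' and nothing in your sketch plays that role.

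Second, your induction step fails as stated. If you add a clique $C$ in colour $r+1$ and colour all edges between $C$ and $V(G)$ with colour $r+1$, the colour-$(r+1)$ graph is the \emph{join} of the clique $C$ with the independent set $V(G)$, not a disjoint union of cliques; a cycle need not lie in a clique, it only needs consecutive vertices adjacent. A colour-$(r+1)$ cycle can therefore alternate $c_1v_1c_2v_2\cdots$ and meet $V(G)$ in up to $|C|$ vertices, so with $C$ large a single new-colour cycle can swallow an arbitrarily large chunk of $G$ and your reduction to ``$G$ minus at most one vertex covered by $r$ cycles'' collapses. The repair is the opposite of what you propose: make the new colour's private set \emph{small} (the paper uses $|K|=2m$ against a previous-level robustness tolerance of $5m$), and bound the absorption by the alternation count $|P\cap H|\le|P\cap K|+1$ rather than by a (false) clique-containment claim.
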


For a particular counterexample of low order to the case $r=3$ of Conjecture \ref{Erdos}, see Figure \ref{figurecounterexample}.  It is worth  noting that in all the $r$-colourings of $K_n$ that we construct, it is possible to cover $n-1$ of the vertices of $K_n$ with $r$ disjoint monochromatic cycles.  Therefore the counterexamples we construct are quite ``mild'' and leave room for further work to either find better counterexamples, or to prove approximate versions of the conjecture similar to Theorem~\ref{threecycles}.

\begin{figure}
  \centering
    \includegraphics[width=0.6\textwidth]{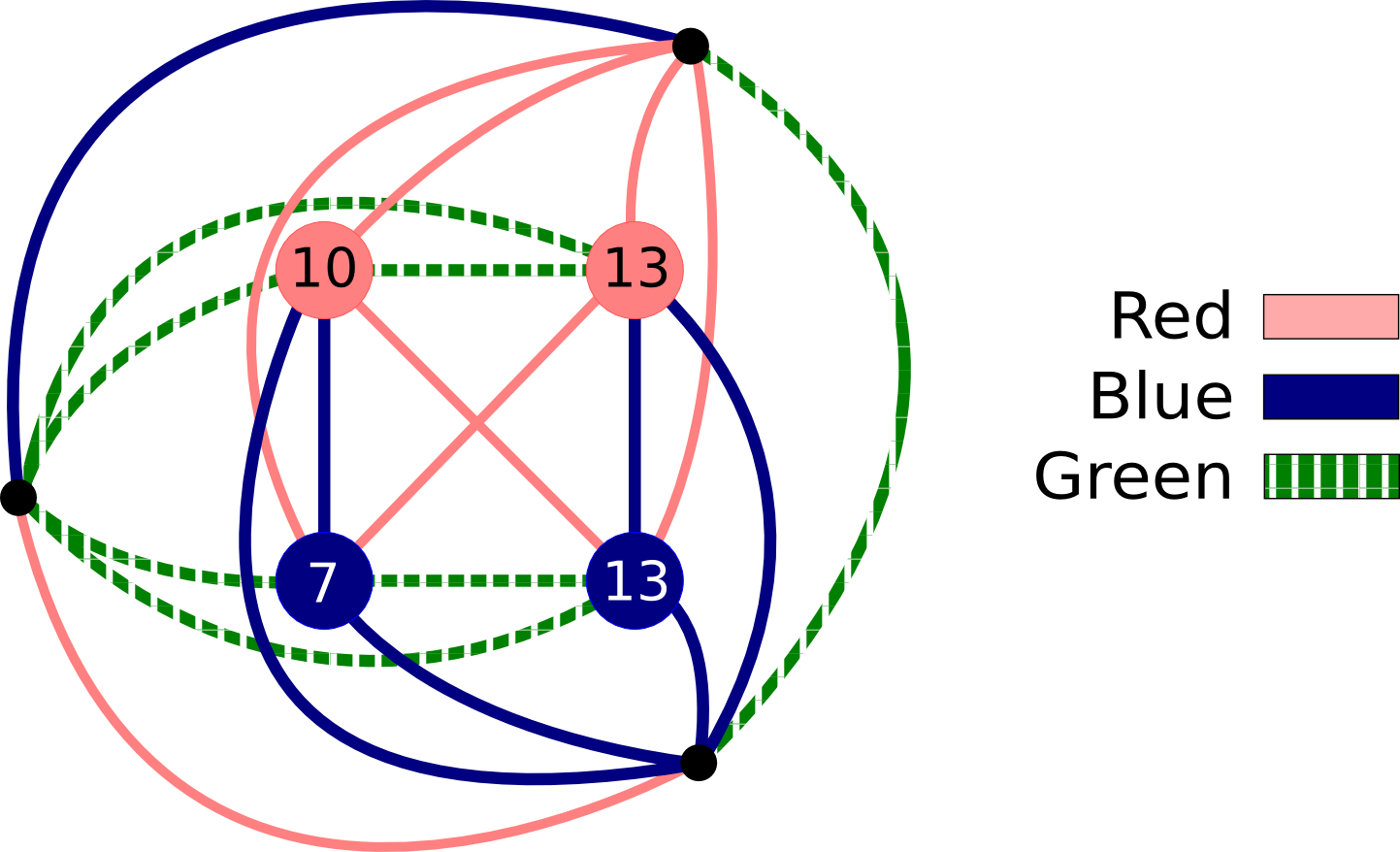}
  \caption{A $3$-edge colouring of $K_{46}$ which cannot be partitioned into three monochromatic cycles.  The small black dots represent single vertices.  The large red and blue circles represent red and blue complete graphs of order specified by the numbers inside.  The coloured lines between the sets represent all the edges between them being of that colour.
  This particular colouring is called $J_3^1$ in this paper.  In Section~2 we prove that this colouring does not allow a partition into three monochromatic cycles.} \label{figurecounterexample}
\end{figure}

Theorem \ref{counterexampletheorem} also raises the question of whether Conjecture~\ref{Gyarfas} holds for $r\geq 3$ or not.  The second main result of this paper is to prove the case $r=3$ of Conjecture~\ref{Gyarfas}.

\begin{theorem} \label{threepaths}
For $n\geq 1$, suppose that the edges of $K_n$ are coloured with three colours.  There is a~vertex-partition of $K_n$ into three monochromatic paths.
\end{theorem}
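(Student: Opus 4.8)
The plan is to prove Theorem~\ref{threepaths} via the intermediate result advertised in the abstract: in any red/blue edge-colouring of a complete graph, the vertex set can be partitioned into a red path and a blue balanced complete bipartite graph. So my first move is to establish this lemma. Given a $2$-coloured $K_m$, consider a longest red path $P$ and look at the set $R$ of vertices outside $P$. All edges inside $R$ are blue (else $P$ could be extended or modified), and in fact by the usual Gerencsér--Gyárfás-type endpoint analysis the blue neighbourhoods of the two endpoints of $P$ interact with $R$ in a controlled way. The goal is to massage this into a blue balanced complete bipartite graph on the leftover vertices: I would take a maximal blue complete bipartite graph $K_{t,t}$ with both sides being blue-cliques' worth of vertices among the leftover, then argue that any vertex not covered must attach to a suitable red path, iterating the extension argument. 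The key technical point is a careful ``rotation'' argument à la Bessy--Thomassé to show that if the construction is maximal then nothing is left over.

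Granting the lemma, I would prove Theorem~\ref{threepaths} as follows. Fix one colour, say colour~$3$, and look at the $2$-coloured graph in colours $1$ and $2$. Apply the lemma to get a colour-$1$ path $P_1$ covering some vertices and a balanced complete bipartite graph $B$ in colour~$2$ covering the rest, with parts $X$ and $Y$, $|X|=|Y|=t$. Now the point of balancedness is that $B$ with the part sizes equal has a Hamilton path in colour~$2$ --- so $X\cup Y$ is covered by a single monochromatic (colour-$2$) path $P_2$. That already gives a partition into $P_1$ (colour $1$) and $P_2$ (colour $2$), using only two paths! That cannot be right in general, so the subtlety must be that the lemma only gives a complete bipartite graph on a subset, with a genuinely leftover set $Z$ on which we have no control except that colour~$3$ has not been used; we cover $Z$ by one colour-$3$ path trivially only if $Z$ induces a connected colour-$3$ graph, which it need not. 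Hence the real structure must be: the lemma is applied so that the uncovered set $Z$ is itself monochromatic in colour~$3$ (a colour-$3$ clique), which is where the third path comes from. So I would strengthen or re-read the lemma as: red path $+$ blue balanced complete bipartite graph together cover \emph{everything}, and then in the three-colour setting one of the three ``missing-colour'' graphs must be handled --- concretely, run the lemma on colours $1,2$; the blue ($=$ colour $2$) complete bipartite part $X\cup Y$ has a colour-$2$ Hamilton path, giving $P_1$ in colour~$1$ and $P_2$ in colour~$2$ covering all of $K_n$, so actually \emph{two} paths suffice and colour~$3$ is free.

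Since two paths cannot possibly suffice for all $3$-colourings, the resolution --- and the thing I would check most carefully --- is that the intermediate lemma must have a genuine obstruction: it should say that we can partition into a red path plus a blue \emph{complete bipartite graph} (possibly unbalanced) plus nothing else, OR into a red path plus a blue balanced complete bipartite graph plus a single extra vertex. The balanced case yields two paths as above; the unbalanced case $K_{a,b}$ with $a<b$ is where colour~$3$ enters: an unbalanced blue complete bipartite graph on parts $X$ (small) and $Y$ (large) can be covered by a blue path through $X$ zig-zagging into $Y$, leaving $|Y|-|X|-1$ vertices of $Y$ uncovered by that path, but those uncovered vertices all lie in $Y$, hence span a blue clique, hence are coverable by one more blue path --- still two paths, contradiction. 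Therefore the honest statement has to involve all three colours symmetrically, and the proof of Theorem~\ref{threepaths} must go: apply the intermediate lemma in colours $1,2$ to get path $P_1$ plus balanced-or-not bipartite $B_2$; if $B_2$ is balanced we win with two paths; if not, the excess vertices of the larger side $Y$ form a set on which we now invoke Gerencsér--Gyárfás in colours $1,3$ restricted to $Y\cup(\text{stuff})$. I expect the main obstacle to be precisely pinning down the correct form of the intermediate lemma so that this case analysis closes --- that is, controlling the leftover set after the bipartite extraction well enough that one single path in the remaining colour (or a second path in an already-used colour) finishes the job, handling the small cases $t=0,1$ and $n$ small by hand.
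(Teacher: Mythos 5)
There is a fundamental misreading at the heart of your proposal. When Lemma~\ref{path-bipartitecoloured} is applied to a $3$-coloured $K_n$ by merging colours $2$ and $3$ into a single ``blue'', the resulting balanced complete bipartite graph is \emph{not} monochromatic: its edges carry an arbitrary mixture of colours $2$ and $3$. So the bipartite part does not have a monochromatic Hamilton path, your ``two paths suffice'' paradox never arises, and the various resolutions you propose (unbalanced bipartite parts, leftover cliques, a single extra vertex) are attempts to repair a contradiction that only exists because of the misreading. The correct combination is: extract a colour-$1$ path plus a $2$-coloured balanced complete bipartite graph, then partition that bipartite graph into monochromatic paths. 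By Corollary~\ref{bipartitepartitionsimple} this needs \emph{three} paths in general (split colourings of $K_{n,n}$ genuinely require three), so the naive argument yields a partition of $K_n$ into \emph{four} monochromatic paths --- exactly as stated in the introduction --- and not three.

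Closing the gap from four to three is the actual content of the theorem, and your proposal contains none of it. The paper's route is: (a) prove that a $2$-coloured $K_{n,n}$ can be partitioned into a red path and a blue path if and only if the colouring is not split (Theorem~\ref{bipartitepartition}), which requires a connectivity-refined version of the extraction lemma (Lemma~\ref{path-bipartiteconnected}) guaranteeing either a near-Hamilton path or a partition into a path plus \emph{three} mutually non-adjacent sets; (b) show that when the bipartite remainder is split one can still finish, using control over the endpoints of the extracted path and a designated vertex $v$ joined in one colour to one side (Lemma~\ref{splitthreepaths} and the component analysis in Theorem~\ref{not4partitecase}); and (c) identify the obstruction class of $4$-partite colourings, for which a separate inductive argument with a system of counting inequalities produces a partition in which two paths may share a colour (Theorem~\ref{4partitecase}). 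None of these ingredients --- the split/non-split dichotomy, the endpoint control, or the $4$-partite case analysis --- appears in your sketch, and the appeal to Gerencs\'er--Gy\'arf\'as ``in colours $1,3$ restricted to $Y\cup(\text{stuff})$'' does not substitute for them: after using colour $1$ for $P_1$ and one colour for part of the bipartite graph, you have no budget left for two further paths. As written, the proposal does not constitute a proof.
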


Theorem~\ref{threepaths} is proved in Section~3. 

One way of generalizing the conjectures and theorems above is to consider partitions of an $r$-edge coloured graph $G$ other than the complete graph.  Some results in this direction were already obtained in~\cite{Sarkozy} where $G$ is an arbitary graph with specified independence number, and in~\cite{Balogh} where $G$ is an arbitary graph with $\delta(G)\geq \frac{3}{4}$.  In order to prove Theorem~\ref{threepaths} we will consider partitions of a 2-edge coloured balanced complete bipartite graph (the complete bipartite graph $K_{n,m}$ is called \emph{balanced} if $n=m$ holds).  In order to state our result we will need the following definition.

\begin{definition}
Let $K_{n,n}$ be a 2-edge coloured balanced complete bipartite graph with partition classes $X$ and $Y$.  We say that the colouring on $K_{n,n}$ is \textbf{split} if it is possible to partition $X$ into two nonempty sets $X_1$ and $X_2$, and $Y$ into two nonempty sets $Y_1$ and $Y_2$, such that the following hold.    
\begin{itemize}
\item The edges between $X_1$ and $Y_2$, and the edges between $X_2$ and $Y_1$  are {red}.  
\item The edges between $X_1$ and $Y_1$, and the edges between $X_2$ and $Y_2$  are {blue}.  
\end{itemize}
The sets $X_1$, $X_2$, $Y_1$, and $Y_2$ will be called the ``classes" of the split colouring.
\end{definition}
When dealing with split colourings of $K_{n,n}$ the classes will always be labeled ``$X_1$'', ``$X_2$'', ``$Y_1$'', and  ``$Y_2$'' with colours between the classes as in the above definition.  See Figure~\ref{figuresplit} for an illustation of a split colouring of $K_n$.

\begin{figure}
  \centering
    \includegraphics[width=0.4\textwidth]{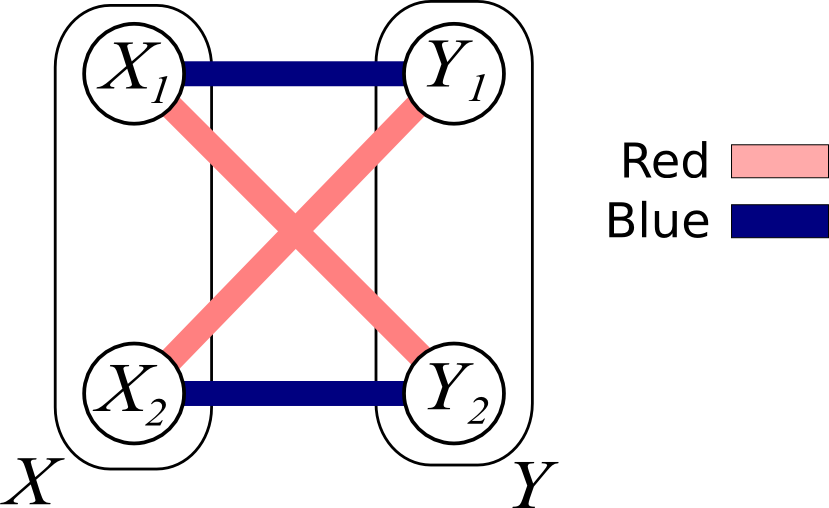}
  \caption{A split colouring of $K_{n,n}$.} \label{figuresplit}
\end{figure}

These colourings have previously appeared in the following theorem due to Gy\'{a}rf\'{a}s and Lehel.  The proof of this theorem appears implicitly in~\cite{Lehel}, and the statement appears in~\cite{Gyarfas2}.
\begin{theorem}  [Gy\'{a}rf\'{a}s \& Lehel,~\cite{Gyarfas2, Lehel}]   \label{GyarfasLehel}
Suppose that the edges of $K_{n,n}$ are coloured with two colours.  If the colouring is not split, then there exist two disjoint monochromatic paths with different colours which cover all, except possibly one, of the vertices of $K_{n,n}$.
\end{theorem}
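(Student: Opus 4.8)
The plan is an extremal argument over pairs of disjoint, oppositely-coloured monochromatic paths. Among all pairs $(P,Q)$ with $P$ a red path, $Q$ a blue path and $V(P)\cap V(Q)=\emptyset$, choose one maximizing $|V(P)|+|V(Q)|$, and subject to that maximizing $|V(P)|$. Let $U$ be the set of vertices of $K_{n,n}$ lying on neither path. If $|U|\le 1$ there is nothing to prove, so assume $|U|\ge 2$; the goal is to show that the colouring must then be split.

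First I would record the constraints forced by maximality. Since rotating $P$ along a red chord (or $Q$ along a blue chord) yields another path on the same vertex set and of the same length, let $P^\ast$ (resp.\ $Q^\ast$) denote the set of vertices occurring as an endpoint of some such rotation of $P$ (resp.\ $Q$); by bipartiteness $P^\ast$ lies in a single part whenever $|V(P)|$ is even, and similarly for $Q^\ast$. Maximality of $|V(P)|+|V(Q)|$ forbids appending an uncovered vertex to $P^\ast$ along a red edge, or to $Q^\ast$ along a blue edge, so every $u\in U$ meets $P^\ast$ only in blue and meets $Q^\ast$ only in red (across the bipartition). Maximality of $|V(P)|$ additionally forbids detaching an endpoint of $Q$ and re-attaching it to an endpoint of $P$ along a red edge, so no vertex of $Q^\ast$ is red-adjacent to a vertex of $P^\ast$ in the opposite part. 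A few further combined moves (transfer-and-append) pin down the remaining colours between $P^\ast$, $Q^\ast$ and $U$. The degenerate shapes of $P$ or $Q$ --- empty, a single vertex, a single edge --- I would handle separately, since there is then little to rotate and the structure has to be read off more directly.

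Next I would assemble the split partition. First I would dispose of the case in which $U$ lies inside a single part --- say $U\subseteq Y$, so that $P\cup Q$ covers all of $X$ --- by letting an endpoint of $P$ or $Q$ in $X$ play the role of the second ``test vertex'' below; this is also where the hypothesis that $K_{n,n}$ is balanced is used, since otherwise one side of the bipartition simply has too many vertices to cover. So fix $u\in U\cap X$ and $v$ either in $U\cap Y$ or an endpoint of a path in $Y$. The edge $uv$ is monochromatic, say red. Using the colour constraints from the previous step, I would show that taking $X_1,X_2$ to be the red- and blue-neighbourhoods of $v$ inside $X$, and $Y_1,Y_2$ to be the blue- and red-neighbourhoods of $u$ inside $Y$ (labelled so that $u$ lies on the $X_1$ side and $v$ on the $Y_2$ side), gives exactly the four classes of a split colouring: one checks that all four sets are nonempty --- here $|U|\ge 2$ and the impossibility of enlarging $P$ or $Q$ are used --- and that every edge of $K_{n,n}$ carries the colour prescribed by the definition of ``split''.

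The crux is this final assembly, together with the parity-sensitive interaction between rotations and the bipartition: rotations preserve the part containing an endpoint, so the exchange constraints split into subcases according to the parities of $|V(P)|$ and $|V(Q)|$ and the parts occupied by their endpoints, and these must be organised so that each ``we could have covered one more vertex (or lengthened $P$)'' step is manifestly legitimate --- the modified objects remaining paths of the correct colours and still disjoint. Once this bookkeeping is in place, the nonemptiness count and the edge-by-edge colour verification are routine.
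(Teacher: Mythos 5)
Your extremal set-up is reasonable and is genuinely different from what the paper does (the paper does not prove Theorem~\ref{GyarfasLehel} by path rotations at all: it cites it, and proves the stronger Theorem~\ref{bipartitepartition} by induction on $n$, using Lemma~\ref{split} to reduce to the two cases ``some colour class is connected'' and ``some vertex sees only one colour'', and then Lemma~\ref{path-bipartiteconnected} plus a parity count in the first case). But as written your argument has a genuine gap at the final assembly step. All of the constraints you extract from maximality --- appending an uncovered vertex, transferring an endpoint from $Q$ to $P$, and your further ``transfer-and-append'' moves --- concern only edges incident to $P^\ast\cup Q^\ast\cup U$. The colour of an edge between, say, an interior vertex of $P$ and an interior vertex of $Q$, or between two interior vertices of $P$ in opposite parts, is not touched by any of these exchanges: no rearrangement of the pair $(P,Q)$ is improved or broken by recolouring such an edge, so maximality says nothing about it. Yet the conclusion you need is that \emph{every} edge of $K_{n,n}$ carries the colour dictated by the four classes $X_1,X_2,Y_1,Y_2$ that you define from the neighbourhoods of the two test vertices $u$ and $v$. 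That global verification is the entire content of the theorem, and it is precisely the part you dismiss as ``routine''; with only the local information you have derived, it does not go through. To close the gap you would need some propagation mechanism --- for instance, showing that the red graph and the blue graph are each disconnected and that no vertex has a monochromatic neighbourhood, and then invoking a characterization like the paper's Lemma~\ref{split} --- but that is exactly the global structure your sketch never establishes.

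A secondary, smaller point: a P\'osa rotation always keeps the moving endpoint in the same part of the bipartition (the new endpoint $p_{i+1}$ is adjacent to $p_i$, which is adjacent to the old endpoint $p_k$), so $P^\ast$ lies in one part irrespective of the parity of $|V(P)|$; parity only governs whether the two \emph{original} endpoints of $P$ lie in the same part. This does not sink the argument, but the case analysis you describe around it is organised on a false premise and would need to be redone once the main gap above is addressed.
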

We will prove the following slight extension of Theorem~\ref{GyarfasLehel}.
\begin{theorem}\label{bipartitepartition}
Suppose that the edges of $K_{n,n}$ are coloured with two colours.  There is a vertex-partition of $K_{n,n}$ into two monochromatic paths with different colours if, and only if, the colouring on $K_{n,n}$ is not split.
\end{theorem}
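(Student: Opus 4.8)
The plan is to prove the two implications separately. The ``only if'' direction is a short connectivity argument; the ``if'' direction uses Theorem~\ref{GyarfasLehel} to reduce the problem to absorbing one leftover vertex, after which the remaining work is a finite but delicate surgery.

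\textbf{Split colourings admit no such partition.} Suppose the colouring of $K_{n,n}$ is split, with classes $X_1,X_2,Y_1,Y_2$, all nonempty. Then the red graph is the disjoint union of the complete bipartite graphs on the pairs $(X_1,Y_2)$ and $(X_2,Y_1)$, so, being connected, every red path has all of its vertices in $X_1\cup Y_2$ or all in $X_2\cup Y_1$ (this includes the empty path and one-vertex paths). Likewise every blue path lies in $X_1\cup Y_1$ or in $X_2\cup Y_2$. If a red path $P$ and a blue path $Q$ partitioned $V(K_{n,n})$ then, without loss of generality, $V(P)\subseteq X_1\cup Y_2$, whence $X_2\cup Y_1\subseteq V(Q)$. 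But $X_2\cup Y_1$ meets both $X_1\cup Y_1$ and $X_2\cup Y_2$ and is contained in neither (as $X_2\neq\emptyset$ and $Y_1\neq\emptyset$), so $V(Q)$ cannot lie in a single blue component --- a contradiction.

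\textbf{Non-split colourings: the reduction.} Now assume the colouring is not split. Consider pairs consisting of a red path $P$ and a blue path $Q$ that are vertex-disjoint, and pick such a pair covering as many vertices as possible. By Theorem~\ref{GyarfasLehel} this maximum is at least $2n-1$; if it equals $2n$ we are done, so suppose it equals $2n-1$ and let $v$ be the uncovered vertex. By the symmetry between $X$ and $Y$ we may assume $v\in X$. Since $P\cup Q$ meets $X$ in $n-1$ vertices and $Y$ in $n$ vertices, and the two class-sizes of a path in a bipartite graph differ by at most one, exactly one of $P,Q$ is nonempty with both endpoints in $Y$, and the other has its two endpoints in different classes; by the symmetry between the colours we may assume $P$ has both endpoints in $Y$, say $p_1$ and $p_k$, and we write $q_1\in X$ and $q_\ell\in Y$ for the endpoints of $Q$. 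If $vp_1$ or $vp_k$ were red we could append $v$ to $P$, contradicting maximality; if $vq_\ell$ were blue we could append $v$ to $Q$, likewise. Hence $vp_1,vp_k$ are blue and $vq_\ell$ is red.

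\textbf{Non-split colourings: the surgery, and the main obstacle.} The idea now is to transfer one vertex from one path to the other so that $v$ can be inserted. For example, deleting $p_1$ from $P$ leaves a red path $p_2\cdots p_k$ and two free vertices $v,p_1$ joined by the blue edge $vp_1$: if the edge $p_1q_1$ is blue then $v\,p_1\,q_1q_2\cdots q_\ell$ is a blue path which, together with $p_2\cdots p_k$, partitions $K_{n,n}$, contradicting maximality. So $p_1q_1$ is red, and symmetrically $p_kq_1$ is red; then $P$ together with $q_1$ forms a red cycle on $V(P)\cup\{q_1\}$, which can be re-cut into a red path ending at any prescribed vertex of $V(P)\cup\{q_1\}$. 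Similar one-vertex transfers show that $vq_2$ is red and $q_1q_\ell$ is blue (so that, in turn, $Q$ yields a blue cycle on $V(Q)$), and, more generally, that whenever $v$ --- or an endpoint of one of the paths being modified --- has a suitably coloured edge to some interior vertex of the other path's cycle, a partition can be assembled, each failure of which forces a further monochromatic edge between the current ``parts''. Iterating, one either produces a partition (contradicting maximality) or accumulates enough monochromatic structure that the colouring is split, contradicting the hypothesis. Carrying out this bookkeeping --- isolating the right invariant of a ``stuck'' configuration, showing that it forces splitness, and separately disposing of the degenerate cases in which $Q$ is empty or a single vertex --- is where the real work lies; the individual moves (extend a path at an endpoint, move an endpoint to the other path, re-cut a monochromatic cycle) are all elementary, but organising them so that every configuration either yields a partition or contradicts non-splitness is the crux.
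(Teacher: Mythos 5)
Your ``only if'' direction is fine and matches the paper's (a red path lives in one red component, a blue path in one blue component, and the four classes cannot be covered by one component of each colour). The problem is the ``if'' direction: the entire mathematical content of the theorem beyond Theorem~\ref{GyarfasLehel} is the passage from ``cover all but one vertex'' to ``partition everything'', and that is exactly the step you leave unproved. Your opening moves are correct as far as they go (the parity analysis locating $v$, the forced colours $vp_1,vp_k$ blue and $vq_\ell$ red, the forced red edges $p_1q_1,p_kq_1$ turning $V(P)\cup\{q_1\}$ into a red cycle), but the assertion that iterating such transfers ``either produces a partition or accumulates enough monochromatic structure that the colouring is split'' is precisely the theorem, restated. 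You give no invariant describing a stuck configuration, no argument that the iteration terminates, and no derivation of the four split classes from a terminal configuration; you explicitly flag this as ``where the real work lies'' and then do not do it. Some of the intermediate claims are also not justified as stated (e.g.\ why $q_1q_\ell$ must be blue is not a one-move deduction of the same kind as the others), so even the skeleton of the induction-free surgery is not in place. As written, this is a plausible plan, not a proof.

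For contrast, the paper does not try to repair a near-partition at all. It proves the statement by induction on $n$, using Lemma~\ref{split} to split into two cases: if some colour class is connected, it applies Lemma~\ref{path-bipartiteconnected} to carve off a red path whose complement is a blue ``tripartite'' structure $A,B_1,B_2$ with $|A|=|B_1|+|B_2|$, and a counting argument shows the complement decomposes into two balanced blue complete bipartite graphs that can be concatenated into a single blue path; if instead some vertex sees only one colour, it deletes that vertex together with one partner and invokes the inductive hypothesis. If you want to salvage your approach you would need to carry out the full case analysis of a maximal red-cycle/blue-cycle/isolated-vertex configuration and show it forces a split colouring; that is doable but is a substantial argument, not a remark.
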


There exist split colourings of $K_{n,n}$ which cannot be partitioned into two monochromatic paths  even when we are allowed to repeat colours.  Indeed, any split colouring with classes $X_1$, $X_2$, $Y_1$, and $Y_2$, satisfying $\big||X_1|-|Y_1|\big|\geq 2$ and $\big||X_1|-|Y_2|\big|\geq 2$ will have this property.  Using Theorem~\ref{bipartitepartition}, it is not hard to show that any 2-colouring of $K_{n,n}$ which cannot be partitioned into two monochromatic paths must be a split colouring with class sizes as above.

However, it is easy to check that every 2-edge coloured $K_{n,n}$ which is split can be partitioned into three monochromatic paths, so the following corollary follows from either of the above two theorems.

\begin{corollary}\label{bipartitepartitionsimple}
Suppose that the edges of $K_{n,n}$ are coloured with two colours.  There is a~vertex-partition of $K_{n,n}$ into three monochromatic paths.
\end{corollary}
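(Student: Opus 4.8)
The plan is to distinguish two cases depending on whether the $2$-colouring of $K_{n,n}$ is split. If it is \emph{not} split, then Theorem~\ref{bipartitepartition} already provides a partition into two monochromatic paths (of different colours), and appending an empty path gives three; alternatively one can invoke Theorem~\ref{GyarfasLehel}, which yields two disjoint monochromatic paths covering all but at most one vertex of $K_{n,n}$, and then take the exceptional vertex as a trivial third path. Either way, the non-split case is immediate.

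So the work is entirely in the split case, where no colouring lemma is needed and a direct construction suffices. Let $X_1,X_2$ and $Y_1,Y_2$ be the classes, so that the bipartite graphs between $X_1,Y_1$ and between $X_2,Y_2$ are complete and blue, while those between $X_1,Y_2$ and between $X_2,Y_1$ are complete and red. The map exchanging $X$ with $Y$ (i.e.\ relabelling $X_1\leftrightarrow Y_1$ and $X_2\leftrightarrow Y_2$) sends split colourings to split colourings, so I may assume $|X_1|\ge|Y_1|$; since $|X_1|+|X_2|=|Y_1|+|Y_2|=n$, this forces $|Y_2|\ge|X_2|$, and indeed $|X_1|-|Y_1|=|Y_2|-|X_2|=:e\ge 0$.

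The construction then goes as follows. In any complete bipartite graph with parts of sizes $p\le q$ there is a path using all $p$ vertices of the smaller part and $\min(p+1,q)$ vertices of the larger part (alternate sides, beginning and ending in the larger part), hence missing exactly $\max(q-p-1,0)$ vertices of the larger part. Applying this in the blue graph between $X_1$ and $Y_1$ gives a blue path $P_1$ covering $Y_1$ and all but $\max(e-1,0)$ vertices of $X_1$; applying it in the blue graph between $X_2$ and $Y_2$ gives a blue path $P_2$ covering $X_2$ and all but $\max(e-1,0)$ vertices of $Y_2$. The uncovered vertices therefore form a set $A\subseteq X_1$ and a set $B\subseteq Y_2$ with $|A|=|B|=\max(e-1,0)$. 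Since every edge between $X_1$ and $Y_2$ is red, the red graph on $A\cup B$ is a balanced complete bipartite graph, so it has a Hamiltonian path $P_3$ (which is empty when $e\le 1$). Then $P_1,P_2,P_3$ partition $K_{n,n}$ into at most three monochromatic paths.

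Since this is a corollary of results already proved, there is no real obstacle; the only point needing a little care is the bookkeeping in the split case. One must choose $P_1$ and $P_2$ so that the uncovered parts of $X_1$ and of $Y_2$ have \emph{the same} size — which is exactly what the identity $|X_1|-|Y_1|=|Y_2|-|X_2|$ delivers — so that a single monochromatic path $P_3$ can absorb both remainders. Everything else is routine.
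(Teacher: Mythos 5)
Your proof is correct and follows essentially the same route as the paper: the non-split case is dispatched by Theorem~\ref{bipartitepartition} (or Theorem~\ref{GyarfasLehel}) with an empty or single-vertex third path, and the split case by a direct ``two paths of one colour plus one of the other'' construction, which is exactly the construction the paper records in Lemma~\ref{splitthreepaths} (there with two red paths and one blue, the colour-swapped mirror of your two blue and one red). Your bookkeeping via $|X_1|-|Y_1|=|Y_2|-|X_2|$ is accurate, so nothing is missing.
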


Recall that Gerencs\'er and Gy\'arf\'as showed that any 2-edge coloured complete graph $K_n$ can be partitioned into two monochromatic paths.  The following lemma, which may be of independent interest, shows that one of the paths partitioning $K_n$ can be replaced by a graph which has more structure.

\begin{lemma}\label{path-bipartitecoloured}
 Suppose that the edges of $K_n$ are coloured with the colours {red} and {blue}.  There is a vertex-partition of $K_n$ into a {red} path and a {blue} balanced complete bipartite graph.
\end{lemma}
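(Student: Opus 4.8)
The plan is to find a red path $P$ in $K_n$ such that the complement $V(K_n)\setminus V(P)$ splits into two equal-size sets $A$ and $B$ with all edges between them blue, and then take the blue balanced complete bipartite graph on $A\cup B$. The natural starting point is the Gerencs\'er--Gy\'arf\'as argument (or, more conveniently, its bipartite refinement via Theorem~\ref{GyarfasLehel} / Theorem~\ref{bipartitepartition}). First I would consider a maximal red path $P$ in $K_n$; if $P$ already covers all of $V(K_n)$ we are done by taking the blue bipartite graph to be empty. Otherwise, let $R=V(K_n)\setminus V(P)$ be the uncovered set, and note that by maximality the two endpoints of $P$ send only blue edges into $R$, which is a handle I expect to use repeatedly.

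The key idea is to look at the $2$-colouring induced on a \emph{complete bipartite graph}. Specifically, take $P=v_1v_2\cdots v_k$ and consider the bipartite graph between $R$ and a suitable independent-in-$P$ side, or more cleanly: delete one endpoint of $P$ if needed to make $|R|$ and the rest line up, and apply Theorem~\ref{bipartitepartition} to an appropriate balanced complete bipartite subgraph. If that bipartite colouring is \emph{not} split, Theorem~\ref{GyarfasLehel} (or Theorem~\ref{bipartitepartition}) gives us two monochromatic paths of different colours covering almost everything; the red one can be glued onto $P$ through the blue endpoint edges, and the blue one becomes (a subgraph of) the desired blue balanced complete bipartite graph — here one uses that a blue path on a bipartite vertex set, together with the ``unused'' vertices on each side, can be absorbed into a blue balanced complete bipartite graph since \emph{every} cross edge in that part is blue. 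If the induced bipartite colouring \emph{is} split, with classes $X_1,X_2,Y_1,Y_2$, then the split structure itself hands us a long red path (zig-zagging $X_1 Y_2 X_1 Y_2\cdots$ and $X_2 Y_1\cdots$) to extend $P$, plus a blue balanced complete bipartite graph between $X_1$ and $Y_1$ (or $X_2$ and $Y_2$), after discarding at most a bounded surplus of vertices which must then be re-threaded onto the red path. So in both cases the split/non-split dichotomy is exactly what lets us recurse or finish.

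I would organize the argument as an extremal one: among all pairs $(P, H)$ where $P$ is a red path, $H$ is a blue balanced complete bipartite graph, and $V(P)$ and $V(H)$ are disjoint, pick one maximizing $|V(P)|+|V(H)|$, and show the leftover set $L=V(K_n)\setminus(V(P)\cup V(H))$ is empty. Suppose not. One shows $L\neq\emptyset$ forces, via the endpoint-colour observation and the split dichotomy applied to the bipartite graph between $L$ and (part of) $H$ or between $L$ and the endpoints, that we can either lengthen $P$ by two vertices, or enlarge $H$ by one vertex on each side, or trade a vertex between $P$ and $H$ to free up room — each contradicting maximality. The parity/balance bookkeeping (ensuring $H$ stays \emph{balanced} while we move vertices around, possibly shunting a single odd leftover vertex onto the red path through a blue edge at an endpoint) is the fiddly part but is purely combinatorial.

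The main obstacle I anticipate is \textbf{maintaining the balance of the bipartite graph} throughout the exchange arguments: extending the red path is cheap, but growing $H$ requires simultaneously finding a new vertex for each side all of whose cross-edges to the other side are blue, and when the relevant induced bipartite colouring is split we are left with a possibly-large imbalanced remainder that cannot go into $H$ and must instead be woven into $P$. Handling that remainder — in particular showing that the red path can always absorb the $\big||X_i|-|Y_j|\big|$ surplus vertices from a split block, using the red edges guaranteed between $X_1,Y_2$ and between $X_2,Y_1$ — is where the real work lies, and it is essentially the same computation that underlies the ``$\big||X_1|-|Y_1|\big|\geq 2$'' discussion already mentioned after Theorem~\ref{bipartitepartition}.
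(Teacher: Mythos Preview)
Your plan is substantially more complicated than what is needed, and it never closes. The paper's proof is a one-line deduction from a purely graph-theoretic lemma (Lemma~\ref{path-bipartite}): forget about blue entirely, let $G$ be the red colour class of $K_n$, and show that \emph{any} graph $G$ can be vertex-partitioned into a path $P$ and two sets $A,B$ with $|A|=|B|$ and no $G$-edges between $A$ and $B$. Applied here, $P$ is red and every $A$--$B$ edge in $K_n$ is blue, so $B(A,B)$ is the desired blue balanced complete bipartite graph. The proof of Lemma~\ref{path-bipartite} is itself a short extremal argument, but with a carefully ordered list of priorities (first maximise $|A\setminus C|$ for $C$ the largest component, then $|A|$, then $|P|$); this ordering is exactly what makes the endpoint-shuffling terminate.

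Your extremal setup---maximise $|V(P)|+|V(H)|$ over disjoint pairs---is the right kind of object (your $H$ is precisely the paper's $(A,B)$), but a single additive metric is not obviously enough. Consider the situation where the leftover $L$ is nonempty, the endpoint $u$ of $P$ has only blue edges into $L$, yet no single vertex of $L$ is blue-complete to one side of $H$: none of your three proposed moves (extend $P$, grow $H$ on both sides, ``trade a vertex'') applies directly, and you have not explained how the split/non-split dichotomy on some auxiliary bipartite graph resolves this. In the paper's argument this is exactly the step handled by the move ``remove $u$ from $P$ and put it into $A$'', which \emph{decreases} $|P|$---permitted because $|A|$ has higher priority than $|P|$---and would be invisible to your metric. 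Invoking Theorem~\ref{bipartitepartition} here is also backwards in spirit: in the paper that theorem is \emph{downstream} of Lemma~\ref{path-bipartite}, not a tool for proving it, and your own summary concedes that the balance bookkeeping ``is where the real work lies''. The missing idea is simply to drop blue from the picture and run the prioritised extremal argument in the red graph alone.
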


Lemma~\ref{path-bipartitecoloured} and Corollary~\ref{bipartitepartitionsimple} together easily imply that a 3-edge coloured complete graph can be partitioned into \emph{four} monochromatic paths.  Indeed suppose that $K_n$ is coloured with the colours red, blue and green.  First we treat {blue} and {green} as a single colour and apply Lemma~\ref{path-bipartitecoloured} to obtain a partition of $K_n$ into a {red} path and a {blue}-{green} balanced complete bipartite graph.  Now apply Corollary~\ref{bipartitepartitionsimple} to this graph to obtain a partition of $K_{n}$ into four monochromatic paths.

The proof of Theorem~\ref{threepaths} is more involved, and we will need a more refined version of Lemma~\ref{path-bipartitecoloured} which is stated and proved in Section~3.1.

\section{Counterexamples to the conjecture of Erd\H{o}s, Gy\'arf\'as, and  Pyber.}

In this section, for $r\geq 3$, we will prove Theorem~\ref{counterexampletheorem}, by constructing a sequence of $r$-edge coloured complete graphs, $J_r ^m$, which cannot be partitioned into $r$ monochromatic cycles.  First we will construct a sequence of $r$-edge coloured complete graphs, $H_r ^m$, which cannot be partitioned into $r$ monochromatic paths \emph{with different colours}.

\begin{construction}\label{affineblowup}
Let $r$ and $m$ be integers such that $r\geq 3$ and $m \geq 1$.
We construct $r$-edge coloured complete graphs, $H_r ^m$, on $\left( \frac{87}{250}5^r-\frac{1}{2}\right)m$ vertices.  First, we will consider the $r=3$ and $m\geq 1$ cases and define the colourings $H_3^m$. Then for $r\geq 4$ we will then construct the graphs $H_r^m$ recursively out of $H_{r-1} ^{5m}$ .
\begin{itemize}
\item Suppose that $r=3$.  To construct $H_3^m$, we 3-edge colour $K_{43m}$ as follows.  We partition the vertex set of $K_{43m}$ into four classes $A_1$, $A_2$, $A_3$, and $A_4$ such that $|A_1|=10m$, $|A_2|=13m$, $|A_3|=7m$, and $|A_4|=13m$.  The edges between $A_1$ and $A_2$ and between $A_3$ and $A_4$ are colour $1$.  The edges between $A_1$ and $A_3$ and between $A_2$ and $A_4$ are colour $2$.  The edges between $A_1$ and $A_4$ and between $A_2$ and $A_3$ are colour $3$.  The edges within $A_1$ and $A_2$ are colour $3$.  The edges within $A_3$ and $A_4$ are colour $2$.

\item Suppose that $r\geq 4$. Note that the $|H_r^m|=|H_{r-1} ^{5m}|+2m$ holds, so we can partition the vertices of $H_r^m$ into two sets $H$ and $K$ such that $|H|=|H_{r-1} ^{5m}|$ and $|K|=2m$. We colour $H$ with colours $1,\dots, r-1$ to produce a copy of $H_{r-1} ^{5m}$.  All other edges are coloured with colour $r$.
\end{itemize}
\end{construction}

\begin{construction}\label{counterexample}
Let $r$ and $m$ be integers such that $r\geq 3$ and $m \geq 1$.
We construct $r$-edge coloured complete graphs, $J_r ^m$, on $\left( \frac{87}{250}5^r-\frac{1}{2}\right)m +r$ vertices.

We partition the vertices of $|J_r ^m|$ into a set $H$ of order $|H_r ^m|$ and a set of $r$ vertices $\{v_1, \dots, v_r\}$.  The edges in $H$ are coloured to produce a copy of $H_r ^m$.  For each $i \in \{1, \dots, r\}$, we colour all the edges between $v_i$ and $H$ with colour $i$.  The edge $v_1 v_2$ is colour $3$.  For $j\geq 3$ the edge $v_1v_j$ is colour $2$ and the edge $v_2v_j$ is colour $1$.  For $3\leq i < j$, the edge $v_i v_j$ is colour $1$.
\end{construction}

The following simple fact will be convenient to state.
\begin{lemma}\label{independentset}
Let $G$ be a graph, $X$ an independent set in $G$, and $P$ a path in $G$.  Then we have 
$$|P\cap X|\leq |P \cap (G\setminus X)|+1.$$
\end{lemma}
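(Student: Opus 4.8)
The plan is to exploit the fact that, read off as a sequence of vertices, a path cannot have two consecutive vertices both lying in the independent set $X$. Indeed, if $p_i$ and $p_{i+1}$ were consecutive vertices of the path and both belonged to $X$, then $p_ip_{i+1}$ would be an edge of $G$ with both endpoints in $X$, contradicting independence. So the $X$-vertices of $P$ are ``spread out'' along $P$, and a one-line counting argument converts this into the claimed inequality.

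Concretely, I would write $P = p_1p_2\cdots p_k$ (so that $p_ip_{i+1}$ is an edge for $1 \le i \le k-1$), set $a = |P \cap X|$, and let $i_1 < i_2 < \cdots < i_a$ be the indices with $p_{i_j} \in X$. By the observation above, for each $j$ we have $p_{i_j+1} \notin X$, and moreover $i_{j+1} \ge i_j + 2$; consequently the indices $i_1+1, i_2+1, \dots, i_{a-1}+1$ are pairwise distinct, all lie in $\{1, \dots, k\}$ (since $i_j + 1 \le i_{j+1} \le i_a \le k$), and each $p_{i_j+1}$ is a vertex of $P$ lying in $G \setminus X$. Hence $|P \cap (G\setminus X)| \ge a - 1 = |P \cap X| - 1$, which is exactly the assertion.

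There is essentially no obstacle; the only care needed is with the degenerate cases — the empty path, a single-vertex path, or $a = 0$ — in all of which the inequality holds trivially (and the index set $\{i_1+1,\dots,i_{a-1}+1\}$ is simply empty).
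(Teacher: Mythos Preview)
Your proof is correct and follows essentially the same approach as the paper: both arguments use that if a vertex $p_i$ of the path lies in $X$ then its successor $p_{i+1}$ must lie in $G\setminus X$, and count accordingly. Your version simply spells out the resulting injection more explicitly and handles the degenerate cases with care, whereas the paper leaves these details implicit.
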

\begin{proof}
Let $x_1\dots x_k$ be the vertex sequence of $P$.  For $i \leq k-1$, if $x_i$ is in $X$, then $x_{i+1}$ must be in $G\setminus X$, implying the result.
\end{proof}

The only property of the graphs $H_r ^m$ that we will need is that they satisfy the following lemma.
\begin{lemma}\label{blowuplemma}
Let $r\geq 3$ and $m \geq 1$ be integers.  The following both hold.
\begin{enumerate} [(i)]
\item $H_r ^m$ cannot be vertex-partitioned into $r-1$ monochromatic paths.
\item $H_r ^m$ cannot be vertex-partitioned into $r$ monochromatic paths with different colours.
\end{enumerate}
\end{lemma}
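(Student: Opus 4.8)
The plan is to prove, by induction on $r$, the following stronger ``with slack'' statement: \emph{for every integer $m\ge 1$ and every $d$ with $0\le d<m$, deleting any $d$ vertices from $H_r^m$ leaves a graph that can be vertex-partitioned neither into $r-1$ monochromatic paths, nor into $r$ monochromatic paths of pairwise distinct colours.} Taking $d=0$ (legitimate since $m\ge1$) gives Lemma~\ref{blowuplemma}. The slack is essential: peeling off colour $r$ in the inductive step forces us to hand back to the induction a copy of $H_{r-1}^{5m}$ with a few more vertices removed, and since the recursion multiplies the superscript by $5$ while adding only $2m$ new vertices, slack $d<m$ at level $r$ turns into slack $<5m$ at level $r-1$, which is exactly what the inductive hypothesis allows.

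For the base case $r=3$ I would first record the monochromatic component structure of $H_3^m$: each colour $i$ spans exactly two connected components, each being a union of two of the classes $A_1,A_2,A_3,A_4$, and in each such component one of the two classes is independent in colour $i$ (e.g.\ in colour $1$ the components are $A_1\cup A_2$ and $A_3\cup A_4$, with all four classes independent in colour~$1$). A non-trivial monochromatic path is confined to one such component, and if a monochromatic path of colour $i$ covers all the surviving vertices of a component $A\cup A'$ with $A$ independent in colour~$i$, then Lemma~\ref{independentset} forces $|A|\le|A'|+1+d$; as $|A_1|=10m$, $|A_2|=|A_4|=13m$, $|A_3|=7m$, this fails whenever $d<m$. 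Ruling out two monochromatic paths: if they have the same colour one uses this inequality (or that an entire class would have to be deleted); if they have different colours, their components would have to form a complementary pair of class-pairs, but every complementary pair (such as $\{A_1,A_2\}$ and $\{A_3,A_4\}$) consists of two components of the \emph{same} colour, a contradiction. Ruling out three monochromatic paths of distinct colours $1,2,3$: enumerating the $2^3$ ways to pick one component per colour, the only choices whose union is all of $A_1,A_2,A_3,A_4$ are the four in which the three components share a common class; the surviving vertices of each of the other three classes then lie in a single path, and applying the independence inequality (in two of the cases to two paths and summing their contributions to the shared class) again contradicts $d<m$. Trivial and empty paths cause no trouble: each trivial path covers one vertex, so a partition using $t$ of them becomes a partition of $H_3^m$ into fewer paths with $d+t$ deletions, an even stronger hypothesis.

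For the inductive step ($r\ge4$) write $H_r^m=H\cup K$ where $H$ is a copy of $H_{r-1}^{5m}$, $|K|=2m$, and colour $r$ occurs exactly on the edges meeting $K$; in particular $H$ has no edge of colour $r$. Given a partition of $H_r^m$-minus-$d$-vertices into $r-1$ monochromatic paths (or into $r$ distinct-colour paths), classify each path as lying inside $K$, lying inside $H$, or straddling. A straddling path has an edge meeting $K$, hence colour $r$, hence no two consecutive vertices in $H$, so by Lemma~\ref{independentset} it meets $H$ in at most $|P\cap K|+1$ vertices; as the straddling paths have disjoint nonempty intersections with $K$ and $|K|=2m$, together they meet $H$ in at most $4m$ vertices (and in the distinct-colour case there is a single colour-$r$ path, so this improves to $2m+1$). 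Treating these $H$-vertices as further deletions, we obtain a partition of $H_{r-1}^{5m}$-minus-(at most $d+4m<5m$)-vertices by the paths lying inside $H$; since $K$ has a surviving vertex, at least one path lay inside or straddled $K$, so the remaining paths number at most $r-2$ in the first case, and at most $r-1$ with distinct colours in the second (after recolouring a possible trivial colour-$r$ path that landed in $H$). Either conclusion contradicts the inductive hypothesis for $H_{r-1}^{5m}$ with deletion parameter $<5m$.

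I expect the main obstacle to be twofold: getting the base-case enumeration for three distinct-colour paths right and organising the (otherwise routine) arithmetic around the shared class, and tracking the slack through the recursion — the induction closes only because the blow-up factor $5$ strictly dominates the number $2m$ of added vertices, and a careless bound on how many vertices of $H$ the colour-$r$ paths can absorb would break it.
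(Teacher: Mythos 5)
Your proposal is correct and follows essentially the same route as the paper: the same strengthening that allows roughly $m$ deleted vertices, the same base case via Lemma~\ref{independentset} applied to the monochromatic components of $H_3^m$ (your four ``shared-class'' configurations, two of which need the summed inequality, are exactly the cases (a)--(f) of Claim~\ref{singlepathclaim}), and the same inductive step bounding by $4m$ the vertices of $H$ absorbed by paths meeting $K$ before invoking the hypothesis for $H_{r-1}^{5m}$. The only cosmetic differences are your strict budget $d<m$ versus the paper's $|T|\le m$, your conversion of trivial paths into extra deletions, and the recolouring of a trivial colour-$r$ path in the distinct-colour case; none of these affects correctness.
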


\begin{proof}
It will be convenient to prove a slight strengthening of the lemma.  Let $T$ be any set of at most $m$ vertices of $H_r ^m$.  We will prove that the graph $H_r ^m\setminus T$ satisfies parts (i) and (ii) of the lemma.

The proof is by induction on $r$.  First we shall prove the lemma for the initial case, $r=3$.

Recall that $H_3^m$ is partitioned into four sets $A_1$, $A_2$, $A_3$, and $A_4$.  Let $B_i=A_i\setminus T$.  Since $|T|\leq m$, the sets $B_1$, $B_2$, $B_3$, and $B_4$ are all nonempty. We will need the following claim. 

\begin{claim} \label{singlepathclaim}
The following hold.
\begin{enumerate}[(a)]
\item $B_2$ cannot be covered by a  colour $1$ path.
\item $B_1$ cannot be covered by a  colour $2$ path.
\item $B_4$ cannot be covered by a  colour $3$ path.
\item $B_4$ cannot be covered by a  colour $1$ path.
\item $B_1 \cup B_3$ cannot be covered by a colour $1$ path contained in  $B_1\cup B_2$ and a disjoint colour $3$ path contained in $B_2\cup B_3$.
\item $B_2 \cup B_3$ cannot be covered by a colour $1$ path contained in  $B_3\cup B_4$ and a disjoint colour $2$ path contained in $B_2\cup B_4$.
\end{enumerate}
\end{claim}
\begin{proof}
\
\begin{enumerate}[(a)]

\item Let $P$ be any colour $1$ path in $H_3 ^m \setminus T$ which intersects $B_2$.  The path $P$ must then be contained in the colour $1$ component $B_1 \cup B_2$.  The set $B_2$ does not contain any colour $1$ edges, so Lemma~\ref{independentset} implies that $|P\cap  B_2|\leq |P\cap B_1| + 1$ holds.  This, combined with the fact that $|T|\leq m$ holds, implies that we have
\begin{equation*} \
  |P\cap  B_2|\leq |P\cap B_1| + 1 \leq |A_1| + 1=10m + 1 < 12m \leq |B_2|.
\end{equation*}
This implies that $P$ cannot cover all of $B_2$.

\item This part is proved similarly to (a), using the fact that $B_1$ does not contain any colour~$2$ edges and that we have $|A_3|+1=7m+1<9m\leq |B_1|$.

\item This part is proved similarly to (a), using the fact that $B_4$ does not contain any colour~$3$ edges and that we have $|A_1|+1=10m+1<12m\leq |B_4|$.

\item This part is proved similarly to (a), using the fact that $B_4$ does not contain any colour~$1$ edges and that we have $|A_3|+1=7m+1<12m\leq |B_4|$.

\item Let $P$ be a colour $1$ path contained in  $B_1\cup B_2$ and let $Q$ be a disjoint colour $3$ path contained in $B_2\cup B_3$.  The set $B_1$ does not contain any colour $1$ edges and $B_3$ does not contain any colour $3$ edges, so Lemma~\ref{independentset} implies that $|(P\cup Q) \cap  (B_1\cup B_3)|\leq |(P\cup Q) \cap B_2| + 2$ holds.  This, combined with the fact that $|T|\leq m$ holds, implies that we have
\begin{equation*} 
|(P\cup Q) \cap  (B_1\cup B_3)| \leq |(P\cup Q) \cap B_2| + 2
\leq |A_2|+2 = 13m+2 < 16m \leq  (B_1\cup B_3)
 \end{equation*}
This implies that $P$ and $Q$ cannot cover all of $B_1\cup B_3$.

\item This part is proved similarly to (e), using the fact that $B_2$ does not contain any colour~$2$ edges, $B_3$ does not contain any colour $1$ edges, and that we have  $|A_4|+2=13m+2<19m\leq B_2\cup B_3$.

\end{enumerate}

\end{proof}
We now prove the lemma for $r=3$.  We deal with parts (i) and (ii) separately
\begin{enumerate} [(i)]
\item  Suppose, for the sake of contradiction, that $P$ and $Q$ are two monochromatic paths which partition $H_3 ^m \setminus T$.  Note that $P$ and $Q$ cannot have different colours since any two monochromatic paths with different colours in $H_3 ^m$ can intersect at most three of the four sets $B_1$, $B_2$, $B_3$, and $B_4$.
The colouring $H_3 ^m \setminus T$ has exactly two components of each colour, so, for each $i$, the set $B_i$ must be covered by either $P$ or $Q$.  This contradicts case (a), (b), or (c) of Claim \ref{singlepathclaim} depending on whether $P$ and $Q$ hove colour $1$, $2$, or $3$.

\item  Suppose, for the sake of contradiction, that $P_1$, $P_2$, and $P_3$ are three monochromatic paths which partition $H_3 ^m \setminus T$ such that $P_i$ has colour $i$.  

Suppose that $P_2 \subseteq B_1\cup B_3$. By parts (c) and (d) of Claim \ref{singlepathclaim},  both of the paths $P_1$ and $P_3$ must intersect $B_4$.  This leads to a contradiction since none of the paths $P_1$, $P_2$, and $P_3$ intersect $B_2$.

Suppose that $P_2 \subseteq B_2\cup B_4$.
If $P_1\subseteq B_1\cup B_2$ then $P_3$ must be contained in $B_2 \cup B_3$, contradicting part (e) of Claim \ref{singlepathclaim}.
If $P_1\subseteq B_3\cup B_4$ then $P_3$ must be contained in $B_1\cup B_4$, contradicting part (f) of Claim \ref{singlepathclaim}.
This completes the proof of the lemma for the case $r=3$.
\end{enumerate}

We now prove  the lemma  for  $r\geq 3$ by induction on $r$.  The initial case $r=3$ was proved above.
Assume that the lemma holds for $H_{r-1} ^m$, for all $m \geq 1$.  Let $H$ and $K$ partition~$H_{r} ^m$ as in the definition of $H_{r} ^m$.  Suppose that $H_r ^m \setminus T$ is partitioned into $r$ monochromatic paths $P_1, \dots, P_r$ (with possibly some of these empty).  Without loss of generality we may assume that these are ordered such that each of the paths $P_1,\dots, P_k$ intersects $K$, and that each of the paths $P_{k+1},\dots, P_r$ is disjoint from $K$. Note that we have $k\leq |K|= 2m$.  Let $S= H\cap (P_1  \cup \dots \cup P_k)$.   The set $H\setminus T$ does not contain any colour $r$ edges, so Lemma~\ref{independentset} implies that we have $|S|\leq |K|+k \leq 4m$, and so $|S\cup T|\leq 5m$.  We know that $H \setminus (S\cup T)$ is partitioned into $r-k$ monochromatic paths $P_{k+1},\dots, P_r$, so, by induction, we know that $k=1$ and that the paths $P_{2},\dots, P_r$ are all nonempty and do not all have different colours.  This completes the proof since we know that $P_1$ contains vertices in $K$, and hence $P_1, \dots, P_r$ are all nonempty, and do not all have different colours.

\end{proof}

We now prove the main result of this section.

\begin{theorem}
For $r\geq 3$ and $m\geq 1$, $J_r ^m$ cannot be vertex-partitioned into $r$ monochromatic cycles.
\end{theorem}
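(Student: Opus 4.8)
The goal is to show that $J_r^m$ has no partition into $r$ monochromatic cycles. The essential idea is that a cycle, after deleting any one vertex, becomes a path, so a partition into $r$ cycles is "almost" a partition into $r$ paths; and the structure of $J_r^m$ — the copy of $H_r^m$ together with the $r$ extra vertices $v_1,\dots,v_r$ — is rigged so that any such near-path-partition contradicts Lemma~\ref{blowuplemma}. Concretely, I would argue by contradiction: suppose $C_1,\dots,C_r$ are vertex-disjoint monochromatic cycles partitioning $J_r^m$, with $C_i$ of colour $c_i$. The first step is to understand how the special vertices $v_1,\dots,v_r$ can sit inside these cycles.

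The key observation is that $v_i$ sends colour $i$ to all of $H$, and the edges among $\{v_1,\dots,v_r\}$ use only colours $1,2,3$ in a restricted pattern. So if a cycle $C$ of colour $c$ contains some $v_i$ with $i\neq c$, then $v_i$'s two neighbours on $C$ must both lie in $\{v_1,\dots,v_r\}$ (since any edge from $v_i$ into $H$ has colour $i\neq c$), which forces $C$ to be contained in $\{v_1,\dots,v_r\}$ and hence $|C|\le r$, so $C\cap H=\emptyset$. In particular, for each colour $c$, there is at most one cycle of colour $c$ that meets $H$ in more than a bounded way — more precisely, at most one cycle of colour $c$ can contain a vertex of $H$ together with a vertex $v_i$ with $i\neq c$ is impossible, so a cycle meeting $H$ either has colour $c$ and contains only $v_c$ among the special vertices (entering/leaving $H$ through colour-$c$ edges at $v_c$), or is entirely inside $\{v_1,\dots,v_r\}$. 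I would make this dichotomy precise and conclude that the cycles meeting $H$ can be taken to be at most $r$ in number, each picking up at most one special vertex, and that deleting at most one vertex from each such cycle turns it into a path contained in $H$ (plus possibly the vertex $v_c$, which we delete).

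From there the plan is: restrict the cycle-partition to $H$. Each $C_i$ meets $H$ in either the empty set, or in a path (if we delete one vertex of $C_i$ — either an arbitrary vertex, if $C_i\subseteq H$, or the vertex of $C_i$ lying outside $H$ when $C_i$ meets $H$ in a path-like fashion; the bulk of the bookkeeping is checking a cycle can't enter and leave $H$ through two different special vertices, which the colour pattern forbids). So $H\cong H_r^m$ is partitioned into at most $r$ monochromatic paths, one per colour, i.e.\ into at most $r$ monochromatic paths \emph{with different colours}; but some of the $r$ cycles are forced to lie entirely in the $r$-vertex set $\{v_1,\dots,v_r\}$, so strictly fewer than $r$ paths actually touch $H$ — giving a partition of $H_r^m$ into at most $r-1$ monochromatic paths, contradicting part (i) of Lemma~\ref{blowuplemma}. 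If the counting only yields $r$ differently-coloured paths covering $H$ (no special cycle is "wasted"), one instead invokes part (ii). The small set $T$ of deleted vertices (at most one per cycle meeting $H$, so at most $r$ of them — here one needs the slack, which is why Lemma~\ref{blowuplemma} was stated with a deletable set $T$ of size up to $m$, and $r\le$ that bound for the relevant $m$) is absorbed into the "$\setminus T$" form of Lemma~\ref{blowuplemma}.

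The main obstacle — and where I would spend the most care — is the case analysis on which cycles are forced into $\{v_1,\dots,v_r\}$ versus which genuinely meet $H$, and pinning down exactly how many special vertices each $H$-meeting cycle can absorb, so that the final count is either "$\le r-1$ paths on $H$" or "$r$ differently-coloured paths on $H$": both are killed by Lemma~\ref{blowuplemma}, but one must verify the dichotomy is exhaustive. A secondary subtlety is making sure the deleted set $T$ (one vertex per cycle, to break cycles into paths) stays within the size budget allowed by the strengthened Lemma~\ref{blowuplemma}; since $|T|\le r$ and $H_r^m$ is built on a blow-up parameter that is a multiple of $m$ much larger than $r$ for all $m\ge 1$, this is fine, but it should be stated explicitly. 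Everything else is routine: colours of the edges among $v_1,\dots,v_r$ were chosen precisely so that $\{v_1,\dots,v_r\}$ cannot itself host too many long monochromatic cycles, which is what prevents "cheating" by shoving structure into the special vertices.
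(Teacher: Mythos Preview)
Your overall strategy is sound and close to the paper's, but the endgame differs and you introduce one genuine error along the way.

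\textbf{The $T$ business is both unnecessary and wrong as stated.} You propose to break each cycle that lies entirely in $H$ into a path by deleting a vertex, collecting the deleted vertices in a set $T$ with $|T|\le r$, and then invoking the strengthened form of Lemma~\ref{blowuplemma} which allows deletion of up to $m$ vertices. But for $m<r$ (in particular $m=1$) you have $|T|\le r>m$, so the bound you need fails; your claim that ``$H_r^m$ is built on a blow-up parameter\dots much larger than $r$'' is simply false --- the parameter is $m$ itself. The fix is trivial: a monochromatic cycle in $H$ contains a spanning monochromatic path (delete an \emph{edge}, not a vertex), so $C_i\cap H$ can always be replaced by a path on the same vertex set and $T=\emptyset$. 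Once you do this the ordinary Lemma~\ref{blowuplemma} applies directly.

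\textbf{Your ``one per colour'' claim needs the pigeonhole step spelled out.} You assert that the paths on $H$ are ``one per colour'' but do not say why. The argument is: if all $r$ cycles meet $H$, then each contains at most one $v_j$ (two would force the cycle into $\{v_1,\dots,v_r\}$); since there are $r$ cycles and $r$ special vertices, each cycle contains exactly one $v_j$, and a cycle through $v_j$ that also meets $H$ must have colour $j$. Hence the $r$ restricted paths have all $r$ colours, contradicting Lemma~\ref{blowuplemma}(ii). If instead some cycle fails to meet $H$, at most $r-1$ paths cover $H$, contradicting part~(i). This dichotomy is correct, but you should state it rather than just gesture at it.

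\textbf{Comparison with the paper.} The paper does not run your dichotomy. It simply observes that each $C_i\setminus\{v_1,\dots,v_r\}$ is a monochromatic path in $H$, applies both parts of Lemma~\ref{blowuplemma} at once to conclude that all $r$ of these paths are nonempty \emph{and} not all of different colours, and then finishes by noting that the missing colour $i$ leaves $v_i$ uncovered (since every cycle meets $H$ and so cannot use the edges inside $\{v_1,\dots,v_r\}$, while every edge from $v_i$ to $H$ has colour $i$). Your route avoids this last ``uncovered vertex'' step by deriving the contradiction directly from the lemma via the pigeonhole count; the paper's route avoids the pigeonhole count by tracking where $v_i$ could live. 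Both are short; yours is arguably a touch cleaner once the $T$ issue is removed.
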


\begin{proof}
Let $H$ and $\{v_1, \dots, v_r\}$ partition $J_r ^m$ as in the definition of $J_r ^m$
Suppose that $C_1, \dots, C_r$ are $r$ disjoint monochromatic cycles in $J_r ^m$.  We need to show that $C_1\cup \dots \cup C_r \neq J_r^m$.  Note that, for any $i\neq j$, the edge $v_iv_j$ has a different colour to the edges between $v_i$ and $H$.  This means that monochromatic cycle in $J_r ^m$ cannot simultaneously pass through edges in $\{v_1, \dots, v_r\}$ and vertices in $H$.

Let $P_i=C_i \setminus \{v_1,\dots, v_r\}$.  We claim that, for each $i$, $P_i$ is a monochromatic path in $H$.  If $C_i \cap \{v_1, \dots, v_r\} \leq 1$, then this is clear.  So, suppose that for $j\neq k$ we have $v_j, v_k \in C_i$.  In this case $C_i$ cannot contain vertices in $H$, since otherwise the edges of $C_i$ which pass through $v_j$ and $v_k$ would have different colours, contradicting the fact that $C_i$ is monochromatic.  This means that $P_i = \emptyset$, which is trivially a path.

Therefore $P_1, \dots, P_r$ partition $H$ into $r$ monochromatic paths.  By Lemma~\ref{blowuplemma}, they are all nonempty and not all of different colours.  This means that there is a colour, say colour $i$, which is not present in any of the cycles $C_1, \dots, C_r$.  For each $j$, the fact that $P_j$ is nonempty implies that $C_j$ does not contain edges in $\{v_1, \dots, v_r\}$.  But then, the vertex $v_i$ cannot be contained in any of the cycles $C_1, \dots, C_r$ since all the edges between $v_i$ and $H$ have colour $i$.
\end{proof}

\section{Partitioning a 3-coloured complete graph into three monochromatic paths.}
In this section we prove Theorem~\ref{threepaths}.

Throughout this section, when dealing with $K_{n,n}$, the classes of the bipartition will always be called $X$ and $Y$.  
For two sets of vertices $S$ and $T$ in a graph $G$, let $B(S,T)$ be the subgraph of $G$ with vertex set $S\cup T$ with $st$ an edge of $B(S,T)$ whenever $s\in S$ and $t \in T$.  
A \emph{linear forest} is a disjoint union of paths.

For a nonempty path $P$, it will be convenient to distinguish between the two endpoints of $P$ saying that one endpoint is the ``start'' of $P$ and the other is the ``end'' of $P$.  Thus we will often say things like ``Let $P$ be a path from $u$ to $v$''.  Let $P$ be a path from $a$ to $b$ in $G$ and $Q$ a path from $c$ to $d$ in $G$.  If $P$ and $Q$ are disjoint and $bc$ is an edge in $G$, then we define $P+Q$ to be the unique path from $a$ to $d$ formed by joining $P$ and $Q$ with the edge $bc$.    If $P$ is a path and $Q$ is a subpath of $P$ sharing an endpoint with $P$, then $P-Q$ will denote the subpath of $P$ with vertex set $V(P)\setminus V(Q)$.

We will often identify a graph $G$ with its vertex set $V(G)$.  Whenever we say that two subgraphs of a graph are ``disjoint" we will always mean vertex-disjoint.  If $H$ and $K$ are subgraphs of $G$ then $H\setminus K$ will mean $V(H)\setminus V(K)$ and $H\cup K$ will mean $V(H)\cup V(K)$.  Additive notation will be reserved solely for concatenating paths as explained above.

All colourings in this section will be edge-colourings.
Whenever a graph is coloured with two colours, the colours will be called ``{red}" and ``{blue}".  If there are three colours, they will be ``{red}", ``{blue}", and ``{green}".  
If a graph $G$ is coloured with some number of colours we define the \emph{{red} colour class} of $G$ to be the subgraph of $G$ with vertex set $V(G)$ and edge set consisting of all the {red} edges of $G$.  
We say that $G$ is \emph{connected in} {red}, if the {red} colour class is a connected graph.  Similar definitions are made for {blue} and {green} as well.

We will need the following special 3-colourings of the complete graph.

\begin{definition}\label{4partitedefinition}
 Suppose that the edges of $K_n$ are coloured with three colours.  We say that the colouring is \textbf{4-partite} if there exists a partition of the vertex set into four nonempty sets $A_1$, $A_2$, $A_3$, and $A_4$ such that the following hold.
\begin{itemize}
 \item The edges between $A_1$ and $A_4$, and the edges between $A_2$ and $A_3$  are {red}.
 \item The edges between $A_2$ and $A_4$, and the edges  between  $A_1$ and $A_3$ are {blue}. 
 \item The edges between $A_3$ and $A_4$, and the edges  between $A_1$ and $A_2$ are {green}. 

\end{itemize}
 The edges within the sets $A_1$, $A_2$, $A_3$, and $A_4$ can be coloured arbitrarily.  The sets $A_1$, $A_2$, $A_3$, and $A_4$ will be called the ``classes" of the $4$-partition. 
\end{definition}

When dealing with 4-partite colourings of $K_n$, the classes will always be labeled ``$A_1$'', ``$A_2$'', ``$A_3$'', and  ``$A_4$'', with colours between the classes as in the above definition.  See Figure~\ref{figure4partite} for an illustation of a 4-partite colouring of $K_n$

\begin{figure}
  \centering
    \includegraphics[width=0.4\textwidth]{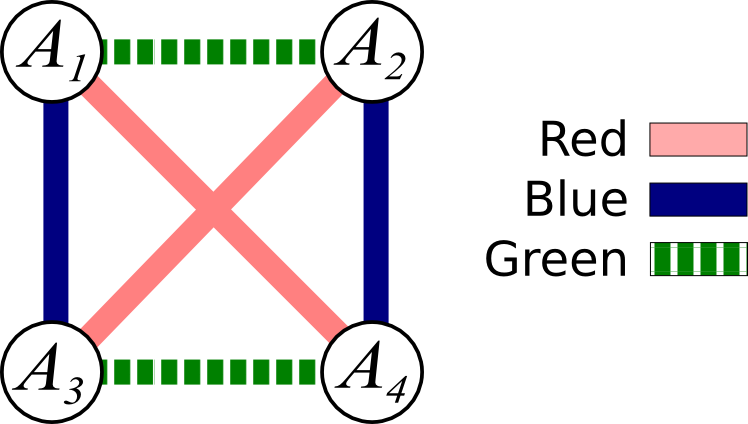}
  \caption{A 4-partite colouring of $K_n$.} \label{figure4partite}
\end{figure}

For all other notation, we refer to~\cite{Diestel}.

In Section~2, we saw that there exist 3-colourings of the complete graphs which cannot be partitioned into three monochromatic paths \emph{with different colours}.  It turns out that all such colourings must be 4-partite.  Our proof of Theorem~\ref{threepaths} will split into the following two parts.

\begin{theorem} \label{not4partitecase}
Suppose that the edges of $K_n$ are coloured with three colours such that the colouring is not 4-partite.  Then $K_n$ can be vertex-partitioned into three monochromatic paths with different colours.
\end{theorem}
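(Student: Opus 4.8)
The plan is to reduce Theorem~\ref{not4partitecase} to the two-coloured bipartite setting and appeal to Theorem~\ref{bipartitepartition}. Choose one colour, say red, to keep separate, fold the other two together, and apply (a refined form of) Lemma~\ref{path-bipartitecoloured} to the resulting $2$-colouring: this gives a vertex-partition of $K_n$ into a red path $P$ and a balanced complete bipartite graph $B$ all of whose edges are blue or green. By Theorem~\ref{bipartitepartition}, $B$ admits a vertex-partition into a blue path and a green path unless its blue-green colouring is split, and in the non-split case, appending $P$ produces the desired partition of $K_n$ into three monochromatic paths of distinct colours. Since this argument can be run for each of the three choices of the colour kept separate, we succeed unless all three runs yield a split balanced complete bipartite graph.

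The substance of the proof is therefore to show that if all three reductions fail, then the colouring of $K_n$ is $4$-partite, contradicting the hypothesis. I would set this up extremally --- among all valid partitions of the form (monochromatic path) plus (balanced complete bipartite graph coloured in the other two colours), over all three choices of the kept colour, take one whose bipartite part $B$ is as large as possible --- and use the refined version of Lemma~\ref{path-bipartitecoloured} to control the endpoints of $P$ and the placement of a bounded number of exceptional vertices. Maximality of $B$, together with the splitness of $B$ and of the other two bipartite parts, should force a rigid configuration: essentially no edge of the kept colour runs inside $B$ or between $P$ and $B$, and similarly for the other two pairings. From this rigidity every colour class of $K_n$ is pinned down by the split classes $X_1, X_2, Y_1, Y_2$ of $B$, and one checks that the only colouring consistent with all three pairings being split is a partition of $V(K_n)$ into four sets realising exactly the colour pattern of Definition~\ref{4partitedefinition}.

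I expect this last step --- reconstructing a global $4$-partition from the simultaneous failure of the three bipartite reductions --- to be the main obstacle. It requires careful bookkeeping of which colours may appear on the edges inside and between the classes $X_1, X_2, Y_1, Y_2$ and between those classes and $P$, and then cross-checking this data against all three ways of pairing the colours. A likely simplification is to first reduce to the case where $P$ (and its analogues for the other pairings) is empty or a single vertex, so that $K_n$ acquires the very constrained shape ``two parts joined only by blue-green edges'' on which the split classes immediately display the required four classes; the genuinely $2$-coloured, non-split cases that arise along the way are then dispatched by folding two colours and quoting Theorem~\ref{bipartitepartition} once more, exactly as in the first paragraph.
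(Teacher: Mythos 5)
Your first paragraph is fine and coincides with the easy half of the paper's argument: partition off a red path, look at the remaining blue--green balanced complete bipartite graph, and invoke Theorem~\ref{bipartitepartition} when it is not split. The gap is that the split case --- which is the entire substance of the theorem --- is not actually handled. Your plan is to show that if all three choices of the kept colour lead to a split bipartite graph, then $K_n$ is $4$-partite; but this implication is exactly what you have not proved, and the mechanisms you sketch do not deliver it. Maximality of the bipartite part does not force ``no edge of the kept colour inside $B$ or between $P$ and $B$'': Lemma~\ref{path-bipartitecoloured} only forbids red edges \emph{between} the two sides $A$ and $B$, and red edges within a side, or arbitrary colours on edges meeting $P$, are perfectly compatible with $|A|+|B|$ being maximal. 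Likewise the proposed reduction to $|P|\leq 1$ is unjustified; $P$ can be long, and even if it were empty you would still need the edges between the two split classes on each side of the bipartition to be red before the four classes match Definition~\ref{4partitedefinition}. So the hard case remains open in your write-up.

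The paper resolves the split case by a different device, which is the idea you are missing: Lemma~\ref{splitthreepaths} shows that a split $K_{n,n}$ can always be partitioned into \emph{two} red paths with prescribed starting vertices $y_1\in Y_1$, $y_2\in Y_2$ plus one blue path, and the proof then manufactures a way to concatenate the two same-coloured paths into one. The extra connector comes from strengthened versions of the path-plus-bipartite decomposition: when $K_n$ is connected in red, Lemma~\ref{path-bipartiteconnected} produces \emph{three} sets $A$, $B_1$, $B_2$ with no red edges between them, so a suitable edge $y_1y_2$ between $B_1$ and $B_2$ is blue or green and joins the two paths; when every colour class is disconnected, the non-$4$-partite hypothesis is used through Lemma~\ref{4partite} to find a vertex $v$ all of whose edges to one side are green, and $v$ (taken as the designated endpoint of $P$ via Lemma~\ref{path-bipartite}) glues the two green paths together. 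Without some such merging argument, trying all three colours does not rescue you: a split bipartite graph with sufficiently unbalanced classes genuinely cannot be partitioned into two monochromatic paths, so your reduction fails there irreparably.
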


\begin{theorem} \label{4partitecase}
 Suppose that the edges of $K_n$ are coloured with three colours such that the colouring is 4-partite.  Then $K_n$ can be vertex-partitioned into three monochromatic paths, at most two of which have the same colour.
\end{theorem}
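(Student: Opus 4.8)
The plan is to exploit the bipartite structure implicit in a $4$-partite colouring. First observe that permuting the classes $A_1,\dots,A_4$ permutes the three colours: each transposition of two classes swaps two colours, while the double transpositions fix all three. Since the statement of the theorem is symmetric in the colours, we may therefore relabel the classes so that $a_1\ge a_2\ge a_3\ge a_4\ge 1$, where $a_i=|A_i|$.

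Each colour determines a perfect matching of the four classes and hence a bipartition of $V(K_n)$ all of whose crossing edges avoid that colour. For instance, since all green edges lie between $A_1$ and $A_2$ or between $A_3$ and $A_4$, every edge between $A_1\cup A_2$ and $A_3\cup A_4$ is red or blue, so $B(A_1\cup A_2,\,A_3\cup A_4)$ is a $2$-coloured complete bipartite graph; the other two pairings behave likewise. If one of these three bipartitions is \emph{balanced}, i.e.\ its two parts have equal size, then the associated complete bipartite graph is balanced and $2$-coloured, so by Theorem~\ref{bipartitepartition} it splits into two monochromatic paths of distinct colours when the colouring is not split, and otherwise into three monochromatic paths with no colour used three times, exactly as in the proof of Corollary~\ref{bipartitepartitionsimple}. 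Either way $K_n$ is partitioned as required, so from now on assume that none of the three pairings is balanced; since $a_1\ge a_3$ and $a_2\ge a_4$ this forces $a_1+a_2>a_3+a_4$.

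In the remaining case the plan is to peel off a single path of the colour \emph{missing} from the chosen bipartition --- here a green path $G_0$ inside $A_1\cup A_2$, built from the green complete bipartite graph $B(A_1,A_2)$ and, when that is too short, from green edges inside a class --- chosen so that the part of $A_1\cup A_2$ left over has the same size as $A_3\cup A_4$ \emph{and} so that $G_0$ swallows one of $A_1,A_2$ entirely. After this the leftover of $A_1\cup A_2$ is a single set, balanced in size with $A_3\cup A_4$, and the complete bipartite graph between them is $2$-coloured red and blue and, crucially, is \emph{not split}: the split of such a colouring would require contributions from both $A_1$ and $A_2$, and one of these has been emptied. Hence by Theorem~\ref{bipartitepartition} this balanced complete bipartite graph partitions into exactly \emph{two} monochromatic paths of different colours, which with $G_0$ give the desired three paths. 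Lemma~\ref{independentset} bounds how many vertices of $A_1$ a green path can reach, and the refined form of Lemma~\ref{path-bipartitecoloured} from Section~3.1 is what lets us carry out the peeling cleanly. In the residual range where $A_1$ is so large that no admissible $G_0$ exists (roughly $a_1>a_2+a_3+a_4$), we instead cover almost all of $A_1$ by applying Theorem~\ref{not4partitecase} to the colouring induced on $A_1$ --- or, if that colouring is itself $4$-partite, by induction on $n$ --- and extend the three resulting paths out of $A_1$ along the monochromatic edges joining $A_1$ to $A_2$, $A_3$ and $A_4$ respectively.

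The whole difficulty is to get down to \emph{three} paths rather than four: the crude recipe ``merge two colours, apply Lemma~\ref{path-bipartitecoloured} to obtain a path plus a balanced complete bipartite graph, then apply Corollary~\ref{bipartitepartitionsimple}'' already yields four monochromatic paths for every $3$-colouring, and the $4$-partite hypothesis must be spent to save the fourth. The key gain is that after rebalancing, the balanced complete bipartite graph handed to the bipartite theorem is not split, so two paths suffice there instead of three; arranging the size arithmetic so that a rebalancing path of the third colour which also empties a class always exists, together with the separate treatment of a dominating class, is the technical heart of the argument.
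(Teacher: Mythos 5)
Your opening reductions are sound: the colour symmetry under permuting $A_1,\dots,A_4$, the disposal of the case where some pairing $A_i\cup A_j$ versus $A_k\cup A_l$ is balanced via Theorem~\ref{bipartitepartition} and Lemma~\ref{splitthreepaths}, and the observation that once the leftover of $A_1\cup A_2$ lies inside a \emph{single} class the residual balanced bipartite graph cannot be split. But the peeling step has a genuine hole in the middle range of class sizes. Writing $a_i=|A_i|$ with $a_1\ge a_2\ge a_3\ge a_4$, your green path $G_0$ must have order exactly $a_1+a_2-a_3-a_4$ and contain one of $A_1,A_2$ entirely, which forces $a_1\ge a_3+a_4$; the analogous peels in the other two pairings force the even stronger conditions $a_1\ge a_2+a_4$ and $a_1\ge a_2+a_3$. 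When $a_1<a_3+a_4$ --- for instance $a_1=6$, $a_2=a_3=a_4=5$, where no pairing is balanced and $a_1\not>a_2+a_3+a_4$ --- no admissible $G_0$ exists in \emph{any} pairing, the leftover of $A_1\cup A_2$ necessarily meets both classes, the residual bipartite graph is split, and your recipe yields four paths. Neither your balanced case nor your dominating-class fallback covers this range.

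Second, even when $a_1\ge a_3+a_4$, the existence of a green path through all of $A_2$ and exactly $a_1-a_3-a_4$ vertices of $A_1$ is not automatic: by Lemma~\ref{independentset} such a path needs green edges inside $A_2$ when $a_1-a_3-a_4<a_2-1$ (and inside $A_1$ when it exceeds $a_2+1$), and nothing guarantees these exist --- $A_2$ may be internally entirely red. The paper's proof spends most of its effort on exactly this point: it defines, for each class, guaranteed monochromatic path lengths $r_i,b_i,g_i$ obtained from an inductive partition of $A_i$, packages them into ``$A_i$-filling'' linear forests, and then runs a six-way case analysis on linear inequalities in the $a_i,r_i,b_i,g_i$ showing that \emph{some} choice of target components (a ``star-like'' or ``path-like'' configuration) is always feasible. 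Your fallback for $a_1>a_2+a_3+a_4$ is essentially the paper's star-like partition, but the case analysis showing that one of the several configurations always applies is the real content of the theorem and is absent from your argument.
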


We will use Theorem \ref{not4partitecase} in the proof of Theorem \ref{4partitecase}.

\subsection{Proof of Theorem~\ref{not4partitecase}.}

We begin by proving the following strengthening of Lemma~\ref{path-bipartitecoloured}
\begin{lemma}\label{path-bipartite}
Let $G$ be a graph, and $v$ a vertex in the largest connected component of $G$.  There is a vertex-partition of $G$ into a path $P$, and two sets $A$ and $B$, such that there are no edges between $A$ and $B$, and $|A|=|B|$.  In addition $P$ is either empty or starts at $v$.
\end{lemma}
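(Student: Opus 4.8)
The plan is to prove this by induction on $|G|$. The base cases where $G$ is empty or a single vertex are trivial (take $P$ empty or $P = v$, and $A = B = \emptyset$). For the inductive step, the key idea is to ``grow'' the path $P$ greedily from $v$ while maintaining a partition of the remaining vertices into two sets with no edges between them of \emph{almost} equal size, and then fix up the discrepancy at the end. More precisely, I would consider a longest path $P$ starting at $v$ inside the component of $v$, say ending at a vertex $w$, and look at the graph $G' = G \setminus P$. By maximality of $P$, the vertex $w$ has no neighbours in $G'$ lying in the component of $v$... but this is not quite enough, since $G'$ may have several components and $v$'s component structure is lost.

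A cleaner approach: induct on $|G|$, and at each step peel off one or two vertices. If $G$ has an isolated vertex $u \ne v$, or more generally if the connected components of $G$ can be split into two groups of equal total size, we are essentially done by putting those groups into $A$ and $B$ and recursing on nothing — but components rarely split evenly, so instead I would do the following. Pick a vertex $u$ in the component of $v$ adjacent to $v$ (if the component is just $\{v\}$, then handle the rest of $G$ separately by pairing up its components). Apply induction to $G \setminus v$ with the base vertex chosen to be $u$ if $u$ lies in the largest component of $G\setminus v$; this gives a path $P'$ from $u$, and sets $A', B'$ with no edges between them and $|A'| = |B'|$. Then $P = v + P'$ together with $A = A'$, $B = B'$ is the desired partition of $G$. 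The subtlety is ensuring $u$ can be chosen inside the largest component of $G \setminus v$ — deleting $v$ might break its component into pieces, none of which is largest. To handle this, when $v$'s component in $G \setminus v$ is not the largest, I would instead move the vertices of $v$'s component entirely into the sets $A, B$: if that component (minus $v$) has even size it splits as $A, B$ evenly since it has no edges to the rest once... no — it does have no edges to \emph{other} components, so we can throw roughly half of it into $A$ and half into $B$, and recurse on the rest of $G$, which is where the largest component now lives.

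The main obstacle, and where care is needed, is the bookkeeping of the size constraint $|A| = |B|$ combined with the requirement that $P$ either is empty or starts at $v$. Unlike Lemma~\ref{path-bipartitecoloured} (which allows any path), here $v$ is pinned, so I cannot freely reroute. The right way to manage this is probably to prove the statement with a slack parameter — e.g., prove that $G$ can be partitioned into a path $P$ (empty or starting at $v$) and sets $A, B$ with no $A$--$B$ edges and $||A| - |B|| \le 1$, handle parity at the very end by absorbing one leftover vertex onto the path (this is always possible: if $|A| = |B| + 1$, take a vertex $a \in A$; if $a$ is isolated from $P$'s end we... ) — actually the cleanest fix is that whenever $||A|-|B|| = 1$, the extra vertex can be appended to $P$ or, if $P$ is empty, we note the parity of $|G|$ forces $A = B = \emptyset$ or lets us start $P = v$. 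I expect the bulk of the work to be a careful case analysis on whether $v$'s component is the unique largest one, organized so that in every case we either extend $P$ by one vertex from $v$ and recurse, or dump a whole non-largest component into $A \cup B$ and recurse, always preserving that the recursion's base vertex sits in a largest component.
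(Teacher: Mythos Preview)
Your proposal is not a proof: it is a sequence of half-started ideas, each of which you yourself abandon mid-sentence, and the concrete steps that remain contain genuine errors. The central difficulty you never resolve is how to achieve \emph{exactly} $|A|=|B|$ while keeping all $A$--$B$ edges absent. Your suggestion to ``throw roughly half'' of the broken pieces of $v$'s component into $A$ and half into $B$ fails outright: those pieces may have internal edges, so splitting a single piece between $A$ and $B$ creates forbidden $A$--$B$ edges, and distributing whole pieces between $A$ and $B$ is a subset-sum problem with no reason to land on equality. Your alternative---prove the statement with slack $\bigl||A|-|B|\bigr|\le 1$ and absorb the surplus vertex onto $P$---also breaks, as you noticed: the surplus vertex $a\in A$ need have no edge to the end of $P$, and you cannot append it. Finally, the inductive setup itself is fragile: after deleting $v$, you need the recursion's base vertex in a \emph{largest} component of $G\setminus v$, and your handling of the case where $v$'s component shatters into small pieces is exactly where the argument collapses.

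The paper sidesteps all of this with a clean extremal argument rather than induction. One takes, among all partitions of $G$ into a path $P$ (empty or starting at $v$) and sets $A,B$ with $|A|\le|B|$ and no $A$--$B$ edges, one that lexicographically maximises first $|A\setminus C|$ (where $C$ is the largest component), then $|A|$, then $|P|$. The trivial partition $P=A=\emptyset$, $B=V(G)$ shows such partitions exist. One then argues by contradiction that $|A|=|B|$: if $|A|<|B|$, a short case analysis (on whether $P$ is empty, whether $C\subseteq A$ or $C\subseteq B$, and whether the end of $P$ sees $B$) always produces a partition improving one of the three quantities. The choice of $|A\setminus C|$ as the \emph{first} quantity to maximise is the key trick---it is what lets you swap $C$ out of $A$ when $P$ is empty and $C\subseteq A$, and is exactly the kind of global bookkeeping device your induction was missing.
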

\begin{proof}
Let $C$ be the largest connected component of $G$.
We claim that there is a partition of $G$ into a path $P$ and two sets $A$ and $B$ such that the following hold:
\begin{enumerate} [(i)]
\item $|A|\leq |B|$.
\item There are no edges between $A$ and $B$.
\item $P$ is either empty or starts from $v$.
\item $|A\setminus C|$ is as large as possible (whilst keeping (i) - (iii) true).
\item $|A|$ is as large as possible (whilst keeping (i) - (iv) true).
\item $|P|$ is as large as possible (whilst keeping (i) - (v) true).
\end{enumerate}

To see that such a partition exists, note that letting $P=A=\emptyset$ and $B=V(G)$ gives a partition satisfying (i) - (iii), so there must be a partition having $|A\setminus C|$, $|A|$, and $|P|$ maximum, as required by (iv) - (vi).  

Assume that $P$, $A$ and $B$ satisfy (i) - (vi).  We claim that $|A|=|B|$ holds.  
Suppose, for the sake of contradiction, that we have $|A| < |B|$.  

Suppose that $P$ is empty.  There are two cases depending on whether $C\subseteq A$ or $C\subseteq B$ holds.  Note that, by (ii), we are always in one of these cases.
\begin{itemize}
 \item Suppose that $C\subseteq A$.  By (i) and (ii), there must be some connected component of $G$, say $D$, which is contained in $B$.  In this case, let $P'=P$, $A'=(A\setminus C) \cup D$, and $B'=(B\setminus D) \cup C$.  Using $|D|\leq |C|$ we obtain that $|A'|\leq |B'|$ holds.  Therefore $P'$, $A'$, and $B'$ partition $G$, satisfy (i) - (iii), and have $|A'\setminus C|=|A|-|C|+|D|> |A|-|C|=|A\setminus C|$.  This contradicts $|A\setminus C|$ being maximal in the original partition.
 \item Suppose that $C\subseteq B$.  In this case we have $v\in B$. Letting $P'=\{v\}$, $A'=A$, and $B'=B\setminus \{v\}$ gives a partition satisfying (i) - (v), and having $|P'| > |P|$.  This contradicts $P$ being maximal in the original partition.
\end{itemize}

Suppose that $P$ is not empty.  Let $u$ be the end vertex of $P$.  There are two cases depending on whether there are any edges between $u$ and $B$

\begin{itemize}
 \item Suppose that for some $w\in B$, $uw$ is an edge.  Letting $P'=P+w$, $A'=A$, and $B'=B\setminus \{w\}$ gives  a partition satisfying (i) - (v), and having $|P'| > |P|$.  This contradicts $P$ being maximal in the original partition.
 \item Suppose that for all $w\in B$, $uw$ is not an edge.  Letting $P'=P-u$, $A'=A\cup \{u\}$, and $B'=B$ gives partition satisfying (i) - (iv), and having $|A'|>|A|$.  This contradicts $A$ being maximal in the original partition.
\end{itemize}
\end{proof}

 Lemma~\ref{path-bipartite} implies Lemma~\ref{path-bipartitecoloured}, by taking $G$ to be the {red} colour class of a $2$-coloured complete graph.  
The following could be seen as a strengthening of Lemma~\ref{path-bipartitecoloured}, when one of the colour classes of $K_n$ is connected.

\begin{lemma}\label{path-bipartiteconnected}
 Suppose that $G$ is connected graph.  Then at least one of the following holds.
\begin {enumerate}[(i)]
\item There is a path $P$ passing through all, but one vertex in $G$. 
\item There is a vertex-partition of $G$ into a path $P$, and three nonempty sets $A$, $B_1$, and $B_2$ such that $|A|=|B_1|+|B_2|$ and there are no edges between any two of $A$, $B_1$, and $B_2$.
\end {enumerate}
\end{lemma}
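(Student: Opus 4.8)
The statement is a dichotomy: either $G$ has a near-spanning path (missing one vertex), or $G$ decomposes as a path plus a tripartite-like independent structure with the balance condition $|A|=|B_1|+|B_2|$. The natural strategy is to apply Lemma~\ref{path-bipartite} to $G$ with a well-chosen root $v$, obtaining a partition into a path $P$ and two independent-across sets $A,B$ with $|A|=|B|$, and then to analyze the internal structure of $B$ (or of $A$) inside $G$. Since $G$ is connected, the only way to have $A,B$ nonempty with no edges between them is that the path $P$ is nonempty and ``separates'' $A$ from $B$; in particular every vertex of $A\cup B$ has all its neighbours in $P\cup A$ or in $P\cup B$ respectively. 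The key observation I would push is: look at whether the subgraph induced on $A\cup B$ is connected. If $A=B=\emptyset$ then $P$ is a spanning (hence near-spanning) path and we are in case~(i). Otherwise we have the balanced pair; I then want to split $B$ into two pieces $B_1,B_2$ with no edges between them and also no edges to $A$, which forces me to understand the component structure of $G[A\cup B]$.

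First I would set up Lemma~\ref{path-bipartite}: pick $v$ in the (unique, since $G$ connected) component and get $P,A,B$ with $|A|\le|B|$, no $A$--$B$ edges, $P$ empty or starting at $v$, and $|P|$ maximal. Re-examining the proof of Lemma~\ref{path-bipartite}, maximality of $|P|$ forces the end vertex $u$ of $P$ to have no neighbours in $B$, and maximality of $|A|$ forces $|A|=|B|$ exactly (this is already proved). So after applying the lemma we are left with $P$, and $A,B$ with $|A|=|B|$, no edges between them, $u=$ end of $P$ not adjacent to $B$. If $A=\emptyset$ we are in case~(i). So assume $|A|=|B|\ge 1$. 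Now the crux: consider $G[B]$ together with its attachment to the rest. The plan is to further refine the side $B$. Because $G$ is connected, $B$ must attach to $P$; since $u\not\sim B$, it attaches to the interior of $P$. I would argue that the balanced structure allows me to ``shave'' a piece: among the connected components of $G[A\cup B]$, each is contained entirely in $A$ or entirely in $B$. If $G[A\cup B]$ is connected it lies entirely in $A$ (if in $B$ then $A=\emptyset$, excluded) — but then $B$ is empty, contradiction; hence $G[A\cup B]$ is disconnected, so $B$ itself splits into $B_1,B_2$ nonempty with no edges between them (group components of $G[B]$ into two nonempty classes — possible since there is at least one component and we need at least two; the case of a single $B$-component needs separate handling). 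If $G[B]$ is connected (a single component), I would instead move: I can reassign so that the ``large'' side $A$ is split, using $|A|=|B|$, by peeling one vertex off $P$'s end $u$ into $A$ — but that increases $|A|$ beyond $|B|$, so instead I'd exchange a component of $A$ with part of $B$, or fall into case~(i) by extending $P$ through $A\cup B$ using connectivity of $G$. The cleanest route is probably: if both $G[A]$ and $G[B]$ are connected, then $G[A\cup B]$ has exactly two components $A$ and $B$ with $|A|=|B|$; then use that $G$ is connected to find a path from some $a\in A$ through interior attachments of $P$ to some $b\in B$, absorbing as much as possible, and show this yields either case~(i) or lets us re-split.

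The main obstacle I anticipate is exactly this last dichotomy inside $B$ (or $A$): when the independent-across partition from Lemma~\ref{path-bipartite} gives sides that are internally connected, there is no obvious third independent set, and we must instead argue that connectivity of $G$ forces a near-spanning path, i.e., case~(i). Making that argument tight — showing that a single path can be routed to cover all but one vertex when $P$ leaves behind two equal-sized internally-connected blobs joined only through $P$'s interior — is the delicate part and will likely require re-running a local-exchange/maximality argument (extend $P$ into one blob, use an edge from the far end back to $P$'s interior, reroute) rather than a black-box application. I would structure the final writeup as: (1) invoke Lemma~\ref{path-bipartite} with the extra maximality of $|P|$; (2) dispose of $A=\emptyset$ as case~(i); (3) if $G[B]$ is disconnected, partition its components into nonempty $B_1,B_2$, giving case~(ii) directly since $|A|=|B_1|+|B_2|$ and there are no $A$--$B$ or $B_1$--$B_2$ edges; (4) if $G[B]$ is connected, symmetrically check $G[A]$, and if that is also connected, run the rerouting argument to land in case~(i). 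Throughout I would keep the invariant ``no edges between $A$ and $B$, $|A|=|B|$'' and only break symmetry by which side I choose to subdivide.
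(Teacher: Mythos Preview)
Your plan diverges from the paper's, and step~(5) has a genuine gap: the expectation that when both $G[A]$ and $G[B]$ are connected you can reroute into case~(i) is wrong. Take the spider with centre $c$ and three legs $c\,a_1\,a_2$, $c\,b_1\,b_2$, $c\,d_1\,d_2$. With $v=c$, the maximality choices inside Lemma~\ref{path-bipartite} produce (up to symmetry) $P=c\,a_1\,a_2$ and $A=\{b_1,b_2\}$, $B=\{d_1,d_2\}$; both $G[A]$ and $G[B]$ are single edges, hence connected. Yet the longest path in this graph has five vertices out of seven, so case~(i) fails. Case~(ii) does hold, but only via a \emph{different} path, e.g.\ $P'=a_1\,c\,b_1$ with $A=\{d_1,d_2\}$, $B_1=\{a_2\}$, $B_2=\{b_2\}$; obtaining $P'$ is a global change (abandon one leg, enter another), not a local exchange at an endpoint. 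So balancing first and hoping to separate one side afterwards does not work without a further idea.

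The paper reverses the order: it first takes a \emph{shortest} path $P_0$ whose removal disconnects $G$ (if no path disconnects $G$, a longest-path argument gives case~(i) immediately). Minimality of $P_0$ forces both its endpoints to have neighbours in every component of $G\setminus P_0$. It then applies Lemma~\ref{path-bipartite} separately to the largest component $C_1$ and to the union $C_2\cup\dots\cup C_j$ of the others, obtaining balanced pairs $(X_1,Y_1)$ and $(X_2,Y_2)$ together with paths that can be spliced onto $P_0$ via those endpoint adjacencies. The tripartition for case~(ii) is $A=X_1\cup X_2$, $B_1=Y_1$, $B_2=Y_2$, with a short case analysis when some of the $X_i,Y_i$ are empty. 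The point is that the separation comes from $P_0$ disconnecting $G$, not from hoping that one side of a global balanced split happens to be internally disconnected.
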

\begin{proof}
First suppose that for every path $P$, $G\setminus P$ is connected.  Let $P$ be a path in $G$ of maximum length.  Let $v$ be an endpoint of $P$.  By maximality, $v$ cannot be connected to anything in $(G\setminus P) \cup \{v\}$.  However, since $P - \{v\}$ is a path, $(G\setminus P) \cup \{v\}$ must be connected, hence it consists of the single vertex $v$.  Thus the path $P$ passes through every vertex in $G$, proving case (i) of the lemma.

Now, we can assume that there exists a path $P_0$ such that $G\setminus P_0$ is disconnected.  In addition, we assume that $P_0$ is a shortest such path.  The assumption that $G$ is connected implies that $P_0$ is not empty.  Suppose that $P_0$ starts with $v_1$ and ends with $v_2$.  Let $C_1, \dots, C_j$ be the connected components of $G\setminus P_0$, ordered such that $|C_1|\geq |C_2|\geq \dots \geq |C_j|$.    The assumption of $P_0$ being a shortest path, such that $G\setminus P_0$ is disconnected, implies that $v_1$ and $v_2$ are both connected to  $C_t$ for each $t \in \{1,\dots,j\}$.  Indeed if this were not the case, then either $P_0 - \{v_1\}$ or $P_0 - \{v_2\}$ would give a shorter path with the required property.

Let $u_1$ be a neighbour of $v_1$ in $C_{1}$ and $u_2$ a neighbour of $v_2$ in $C_2$.  Apply Lemma~\ref{path-bipartite} to~$C_{1}$ to obtain a partition of $C_1$ into a path $P_1$ and two sets $X_1$ and $Y_1$, such that $|X_1|=|Y_1|$ and there are no edges between $X_1$ and $Y_1$.  Similarly, apply Lemma~\ref{path-bipartite} to $C_2 \cup \dots \cup C_{j}$ to obtain a partition of $C_2 \cup \dots \cup C_{j}$ into a path $P_2$ and two sets $X_2$ and $Y_2$, such that $|X_2|=|Y_2|$ and there are no edges between $X_2$ and $Y_2$.    In addition we can assume that $P_1$ is either empty or ends at $u_1$ and that $P_2$ is either empty or starts at $u_2$.   Since $v_1u_1$ and $v_2u_2$ are both edges, we can define a path  $Q=P_1 + P_0 + P_2$.  Let $w_1$ be the start of $Q$, and $w_2$ the end of $Q$.  We have that either $w_1 \in C_1$ or $w_1 =v_1$ and either $w_2\in C_2\cup \dots\cup C_j$ or $w_2=v_2$.

If each of the sets $X_1$, $Y_1$, $X_2$, and $Y_2$ is nonempty, then case (ii) of the lemma holds, using the path $Q$, $A=X_1 \cup X_2$, $B_1=Y_1$, and $B_2=Y_2$.

Suppose that $X_1=Y_1=\emptyset$ and  $X_2=Y_2\neq \emptyset$.  In this case $w_1$ must lie in $C_1$ since we know that $P_1 \cup X_1\cup X_2= C_1 \neq \emptyset$.  Therefore $P_1$ is nonempty, and so must contain $w_1$.

Suppose that $w_2$ has no neighbours in $X_2 \cup Y_2$.  Note that in this case $w_2\neq v_2$ since otherwise $X_2\cup Y_2= C_2\cup\dots\cup C_j$ would hold, and we know that $v_2$ has neighbours in $C_2\dots C_j$.  Therefore, we have $w_2\in C_2\cup\dots\cup C_j$ , and so (ii) holds with $P=Q-\{w_1\} -\{w_2\}$ as our path, 
 $A=X_2\cup\{w_2\}$, $B_1=Y_2$, and $B_2=\{w_1\}$.  

Suppose that $w_2$ has a neighbour $x$ in $X_2\cup Y_2$.  Without loss of generality, assume that $x \in X_2$.  If $|X_2|=|Y_2|= 1$, then case (i) of the lemma holds with  $Q+x$ a path covering all the vertices in $G$ except the single vertex in $Y_2$.  If $|X_2|=|Y_2|\geq 2$ then case (ii) holds with $P=Q+\{x\}-\{w_1\}$ as our path, $A=Y_2$, $B_1=X_2 - \{x\}$, and $B_2=\{w_1\}$.  

The case when $X_1=Y_1\neq\emptyset$ and $X_2=Y_2=\emptyset$ is dealt with similarly.  If $X_1=Y_1=X_2=Y_2=\emptyset$, then $Q$ covers all the vertices in $G$, so case (i) holds.
\end{proof}

The following lemma gives a characterization of split colourings of $K_{n,n}$.

\begin{lemma} \label{split}
Let $K_{n,n}$ be a 2-edge coloured balanced complete bipartite graph.  The colouring on $K_{n,n}$ is split if and only if none of the following hold.
\begin{enumerate} [(i)]
 \item $K_{n,n}$ is connected in some colour.
 \item There is a vertex $u$ such that all the edges through $u$ are the same colour.
 \end{enumerate}
\end{lemma}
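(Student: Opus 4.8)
The plan is to prove the two directions separately, the forward implication being routine and the converse carrying all the content.

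First I would check that a split colouring satisfies neither (i) nor (ii). If the colouring is split with classes $X_1,X_2,Y_1,Y_2$, then the red colour class is exactly the disjoint union of $B(X_1,Y_2)$ and $B(X_2,Y_1)$, and the blue colour class is exactly the disjoint union of $B(X_1,Y_1)$ and $B(X_2,Y_2)$; since all four classes are nonempty, both of these graphs are disconnected, so (i) fails. For (ii), any vertex $u$ lies in one class, say $X_1$, and then sends red edges to the nonempty set $Y_2$ and blue edges to the nonempty set $Y_1$, so not all edges at $u$ share a colour; the same argument applies to $u$ in any class.

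For the converse, suppose neither (i) nor (ii) holds. From the failure of (ii), every vertex is incident with both a red and a blue edge, so neither colour class has an isolated vertex. From the failure of (i), the red colour class is disconnected; pick one red component $C$ and let $C'$ be the union of all the other red components, so that $C$ and $C'$ partition the vertex set and no edge between $C$ and $C'$ is red. Set $X_1 = X\cap C$, $Y_2 = Y\cap C$, $X_2 = X\cap C'$, and $Y_1 = Y\cap C'$. Because the red class has no isolated vertex, $C$ contains a red edge and hence meets both $X$ and $Y$, and likewise for $C'$, so all four of these sets are nonempty. Every edge between $X_1$ and $Y_1$, and every edge between $X_2$ and $Y_2$, runs between $C$ and $C'$ and is therefore blue.

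It remains to force the edges between $X_1$ and $Y_2$, and between $X_2$ and $Y_1$, to be red, and this is the only real step. The graphs $B(X_1,Y_1)$ and $B(X_2,Y_2)$ are connected (all four classes are nonempty) and blue, and their vertex sets together cover all of $K_{n,n}$. If some edge between $X_1$ and $Y_2$ were blue, then that edge together with these two graphs would exhibit the blue colour class as connected, contradicting the failure of (i); hence all $X_1$–$Y_2$ edges are red, and symmetrically all $X_2$–$Y_1$ edges are red. Thus $X_1,X_2,Y_1,Y_2$ witness that the colouring is split. The point where care is needed is precisely here: one cannot read off the classes directly by claiming a red component is "red-complete", since its internal edges may well be blue, so the colours of the four cross-blocks must be pinned down through the blue-connectivity argument rather than from the component structure; note also that there is no need to count the red components, as lumping every non-$C$ component into $C'$ suffices.
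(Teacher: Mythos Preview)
Your proof is correct and follows essentially the same approach as the paper: take a red component to define the four candidate classes, then use the disconnectedness of the blue colour class to force the remaining two cross-blocks to be red. The only organisational difference is that the paper proves the contrapositive (``not split and not (i) $\Rightarrow$ (ii)'') via a case split on which of the four sets is empty, whereas you invoke the failure of (ii) up front to guarantee all four sets are nonempty, which is slightly cleaner.
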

\begin{proof}
 Suppose that $K_{n,n}$ is not split and (i) fails to hold.  We will show that (ii) holds.  Let $X$ and $Y$ be the classes of the bipartition of $K_{n,n}$.  Let $C$ be any {red} component of $K_{n,n}$, $X_1=X\cap C$, $X_2=X \setminus C$, $Y_1=Y \cap C$, and $Y_2= Y\setminus C$.  If all these sets are nonempty, then $G$ is split with classes $X_1$, $X_2$, $Y_1$, and $Y_2$.  To see this note that there cannot be any {red} edges between $X_1$ and $Y_2$, or between $X_2$ and $Y_1$ since $C$ is a {red} component.  There cannot be any {blue} edges between $X_1$ and $Y_1$, or between $X_2$ and $Y_2$ since $K_{n,n}$ is disconnected in {blue}.  

Assume that one of the sets $X_1$, $X_2$, $Y_1$, or $Y_2$ is empty.
If $X_1$ is empty, then $C$ is entirely contained in $Y$ and hence consists of a single vertex $u$, giving rise to case (ii) of the lemma. 
If $X_2$ is empty, then note that $Y_2$ is nonempty, since otherwise $C=K_{n,n}$ would hold contradicting our assumption that (i) fails to hold.  Let $u$ be any vertex in $Y_2$.  For any $v$, the edge $uv$ must be {blue}, since $X\subseteq C$ holds. Thus again (ii) holds.  The cases when $Y_1$ or $Y_2$ are empty are done in the same way by symmetry.

For the converse, note that if $K_{n,n}$ is split, then the {red} components are $X_1\cup Y_1$ and $X_2\cup Y_2$, and that the {blue} components are $X_1\cup Y_2$ and $X_2 \cup Y_1$.  It is clear that neither (i) nor (ii) can hold.
\end{proof}

We now prove Theorem~\ref{bipartitepartition}

\begin{proof} [Proof of Theorem \ref{bipartitepartition}]
Suppose that the colouring of $K_{n,n}$ is split.  Note that two monochromatic paths with different colours can intersect at most three of the sets $X_1$, $X_2$, $Y_1$ and~$Y_2$.  This together with the assumption that $X_1$, $X_2$, $Y_1$ and $Y_2$ are all nonempty implies that $K_{n,n}$ cannot be partitioned into two monochromatic paths with different colours.  

It remains to prove that every 2-coloured $K_{n,n}$ which is not split can be partitioned into two monochromatic paths.

The proof is by induction on $n$.  The case $n=1$ is trivial.  For the remainder of the proof assume that the result holds for $K_{m,m}$ for all $m<n$.

Assume that the colouring on $K_{n,n}$ is not split.  Lemma~\ref{split} gives us two cases to consider.  

Suppose that $K_{n,n}$ satisfies case (i) of Lemma~\ref{split}. Without loss of generality we can assume that $K_{n,n}$ is connected in {red}.

Apply Lemma~\ref{path-bipartiteconnected} to the {red} colour class of $K_{n,n}$.  If case (i) of Lemma~\ref{path-bipartiteconnected} occurs, then the theorem follows since we may choose $P$ to be our {red} path and the single vertex to be our {blue} path.  

So we can assume that we are in case (ii) of Lemma~\ref{path-bipartiteconnected}.  This gives us a partition of $K_{n,n}$ into a {red} path $P$, and three nonempty sets $A$, $B_1$, and $B_2$, such that $|A|=|B_1|+|B_2|$ and all the edges between $A$, $B_1$, and $B_2$ are {blue}.   Let $H = (A \cap X) \cup (B_1 \cap Y) \cup (B_2 \cap Y)$ and $K= (A \cap Y) \cup (B_1 \cap X) \cup (B_2 \cap X)$.  Note that $K_{n,n}[H]$ and $K_{n,n}[K]$ are both {blue} complete bipartite subgraphs of $K_{n,n}$, since all the edges between $A$ and $B_1\cup B_2$ are {blue}.  Notice that $|A|=|B_1|+|B_2|$  and $|X|=|Y|$ together imply that $P$ contains an even number of vertices.  This, together with the fact that the vertices of $P$ must alternate between $X$ and $Y$, implies that $|X\setminus P|=|Y\setminus P|$.  However $X\setminus P= X \cap (A\cup B_1\cup B_2)$ and $Y\setminus P= Y \cap (A\cup B_1\cup B_2)$, so we have that
\begin{equation} \label{path-bipartiteconnectedequation}
|X \cap A|+ |X\cap B_1| +|X \cap B_2|=|Y \cap A|+ |Y\cap B_1| +|Y \cap B_2|. 
\end{equation}
  Equation (\ref{path-bipartiteconnectedequation}), together with $|X \cap A|+|Y \cap A| = |Y\cap B_1| +|Y \cap B_2|+|X\cap B_1| +|X \cap B_2|$  implies that the following both hold:
\begin{align}
|A \cap X|&= |B_1 \cap Y|+|B_2 \cap Y|, \label{eq:equalityBY}\\
|A \cap Y|&= |B_1 \cap X|+|B_2 \cap X|. \label{eq:equalityBX}
\end{align}
Thus $K_{n,n}[H]$ and $K_{n,n}[K]$ are balanced {blue} complete bipartite subgraphs of $K_{n,n}$ and so can each be covered by a {blue} path.  If $H =\emptyset$ or $K=\emptyset$ holds, the theorem follows, since $V(K_{n,n})=V(P)\cup H \cup K$.   

So, we can assume that $H\neq \emptyset$ and $K\neq \emptyset$.  Equation (\ref{eq:equalityBY}), together with $H\neq\emptyset$, implies that $(B_1\cup B_2)\cap H \neq \emptyset$.  Similarly (\ref{eq:equalityBX}) together with $K\neq\emptyset$, implies that $(B_1\cup B_2)\cap K \neq \emptyset$.  We also know that $B_1$ and $B_2$ are nonempty and contained in $H\cup K$.  Combining all of these implies that at least one of the following holds.
\begin{enumerate}[(a)]
 \item $B_1 \cap H \neq \emptyset$ and $B_2 \cap K \neq \emptyset$.
 \item $B_1 \cap K \neq \emptyset$ and $B_2 \cap H \neq \emptyset$. 
\end{enumerate}
Suppose that (a) holds.  Choose $x \in B_1 \cap H$ and a {blue} path $Q$ covering $H$ and ending with $x$.  Choose $y \in B_2 \cap K$ and a {blue} path $R$ covering $K$ and starting with $y$.  Notice that $x\in X$ and $y\in Y$, so there is an edge  $xy$.  The edge $xy$  must be {blue} since it lies between $B_1$ and $B_2$.  This means that $Q+R$ is a {blue} path covering $A \cup B_1 \cup B_2=G\setminus P$, implying the theorem.  The case when (b) holds can be treated identically, exchanging the roles of $H$ and $K$.

\

Suppose that $K_{n,n}$ satisfies case (ii) of Lemma~\ref{split}.   Without loss of generality, this gives us a vertex $u \in X$ such that the edge $uy$ is {red} for every $y \in Y$. Let $v$ be any vertex in $Y$.

Suppose that the colouring of $K_{n,n}\setminus \{u,v\}$ is split with classes $X_1$, $X_2$, $Y_1$, and $Y_2$.  In this case $B(X_1,Y_2)$, $B(X_2, Y_1)$, and $\{v\}$ are all connected in {red}, and $u$ is connected to each of these by {red} edges.  This means that $K_{n,n}$ is connected in {red} and we are back to the previous case.  

So, suppose that the colouring of $K_{n,n}\setminus \{u,v\}$ is not split.  We claim that there is a partition of $K_{n,n}\setminus \{u,v\}$ into two a {red} path $P$ and a {blue} path $Q$ such that either $P$ is empty or $P$ ends in $Y$.  To see this, apply the inductive hypothesis to $K_{n,n}\setminus \{u,v\}$ to obtain a partition of this graph into a {red} path $P'$ and a {blue} path $Q'$.  If $P'$ is empty or $P'$ has an endpoint in $Y$, then we can let $P=P'$ and $Q=Q'$.   Otherwise, the endpoints $P'$ are in $X$, and so the endpoints of $Q'$ in $Y$.  Let $x$ be the end of $P'$ and $y$ the end of $Q'$.  If $xy$ is {red}, let $P=P'+\{y\}$ and $Q=Q' - \{y\}$. If $xy$ is {blue}, let $P=P' - \{x\}$ and $Q=Q'+ \{x\}$.  In either case, $P$ and $Q$ give a partition of $K_{n,n}\setminus \{u,v\}$ into two paths such that either $P$ is empty or $P$ has an endpoint in $Y$. 

Suppose that $P$ is empty.  In this case we have a partition of $K_{n,n}$ into a {red} path $\{u,v\}$ and a {blue} path $Q$.

Suppose that $P$ ends in a vertex, $w$, in $Y$.  The edges $uv$ and $uw$ are both {red}, so $P+\{u\}+\{v\}$ is a {red} path giving the required partition of $K_{n,n}$ into a {red} path $P+\{u\}+\{v\}$ and a {blue} path $Q$.
\end{proof}

As remarked in the introduction, there are split colourings of $K_{n,n}$ which cannot be partitioned into two monochromatic paths.  The following lemma shows that three monochromatic paths always suffice.

\begin{lemma} \label{splitthreepaths}
 Suppose that the edges of  $K_{n,n}$ are coloured with two colours.  Suppose that the colouring is split with classes $X_1$, $X_2$, $Y_1$, and $Y_2$.  For any two vertices $y_1 \in Y_1$ and $y_2 \in Y_2$, there is a vertex-partition of $K_{n,n}$ into a {red} path starting at $y_1$, a {red} path starting at $y_2$, and a {blue} path.
\end{lemma}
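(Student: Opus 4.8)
The plan is to arrange the two red paths so that, between them, they cover all of $X_2$ and all of $Y_2$, leaving behind a balanced blue complete bipartite graph to serve as the third path. Write $a_i=|X_i|$ and $b_i=|Y_i|$, so that $a_1+a_2=b_1+b_2=n$ and all four quantities are at least $1$ (the classes of a split colouring are nonempty by definition). The map exchanging $X_1$ with $X_2$ and $Y_1$ with $Y_2$ is an automorphism of the coloured graph that preserves each colour class, so by relabelling I may assume $b_1\geq a_2$ (equivalently $a_1\geq b_2$); put $d=b_1-a_2=a_1-b_2\geq 0$.

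First I would pick $Y_1'\subseteq Y_1$ with $y_1\in Y_1'$ and $|Y_1'|=a_2$ (possible since $1\leq a_2\leq b_1$), and $X_1'\subseteq X_1$ with $|X_1'|=b_2$ (possible since $1\leq b_2\leq a_1$). Then $B(X_2,Y_1')$ is a red copy of $K_{a_2,a_2}$, so it has a Hamilton path $P_1$ with $y_1$ as an endpoint; similarly $B(X_1',Y_2)$ is a red copy of $K_{b_2,b_2}$, so it has a Hamilton path $P_2$ with $y_2$ as an endpoint (note $y_2\in Y_2$). Finally, $B(X_1\setminus X_1',\,Y_1\setminus Y_1')$ is a \emph{blue} copy of $K_{d,d}$, since every edge between $X_1$ and $Y_1$ is blue and $|X_1\setminus X_1'|=a_1-b_2=d=b_1-a_2=|Y_1\setminus Y_1'|$; let $P_3$ be a Hamilton path of it (the empty path when $d=0$).

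The paths $P_1$, $P_2$, $P_3$ lie in the pairwise disjoint sets $X_2\cup Y_1'$, $X_1'\cup Y_2$, and $(X_1\setminus X_1')\cup(Y_1\setminus Y_1')$, and a one-line count ($X_2$ and $Y_2$ are covered entirely, while $Y_1$ splits as $Y_1'$ together with $Y_1\setminus Y_1'$, and $X_1$ splits as $X_1'$ together with $X_1\setminus X_1'$) shows they cover every vertex of $K_{n,n}$; so they give the required partition into a red path from $y_1$, a red path from $y_2$, and a blue path. There is no serious obstacle here: the single idea needed is that forcing one red path to absorb all of $X_2$ and the other to absorb all of $Y_2$ makes the remainder automatically balanced and monochromatic, and everything else is bookkeeping — the only points requiring a little care are the symmetry reduction to $d\geq 0$ and the degenerate cases $d=0$ (empty blue path) and $a_2=1$ or $b_2=1$ (red paths that are single edges).
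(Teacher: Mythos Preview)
Your argument is correct and is essentially the paper's own proof: reduce by the colour-preserving symmetry $X_1\leftrightarrow X_2$, $Y_1\leftrightarrow Y_2$ to the case where the two red bipartite pieces can each be covered by a Hamilton path starting at the prescribed $y_i$, and observe that the leftover is a balanced blue $K_{d,d}$. If anything, your write-up is cleaner than the paper's, which garbles some indices and makes a slightly sloppy ``without loss of generality'' reduction; your single assumption $b_1\ge a_2$ is exactly what is needed and is justified by the symmetry you name.
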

\begin{proof}
Without loss of generality, suppose that $X_1\leq X_2$ and $Y_1\leq Y_2$.  This, together with $X_1+X_2=Y_1+Y_2$ implies that  $X_1\leq Y_2$ and $Y_1\leq X_2$ both hold.  

$B(X_1, Y_2)$ is a {red} complete bipartite graph, so we can cover $X_1$ and $|X_1|$ vertices in $Y_2$ with a {red} path starting from $y_1$.  Similarly we can cover  $Y_2$ and $|Y_2|$ vertices in $X_1$ with a {red} path starting from $y_2$.   The only uncovered vertices are in $Y_2$ and $X_1$.  All the edges between these are {blue}, so we can cover the remaining vertices with a {blue} path.
\end{proof}

The following lemma gives an alternative characterization of $4$-partite colourings of $K_n$.

\begin{lemma} \label{4partite}
Suppose that the edges of $K_n$ are coloured with three colours.  The colouring is 4-partite if and only if there is a {red} connected component $C_1$ and a {blue} connected component $C_2$ such that all of the sets $C_1\cap C_2$, $(V(K_n)\setminus C_1)\cap C_2$, $C_1\cap (V(K_n)\setminus C_2)$, and $(V(K_n)\setminus C_1)\cap (V(K_n)\setminus C_2)$ are nonempty.
\end{lemma}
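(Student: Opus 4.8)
The plan is to prove both directions by unwinding the definitions and matching up the colour classes with the two components $C_1$ and $C_2$. For the forward direction, suppose the colouring of $K_n$ is 4-partite with classes $A_1, A_2, A_3, A_4$. The natural candidates are $C_1 = A_1 \cup A_4$ together with whatever red stuff is attached inside the $A_i$, and $C_2 = A_1 \cup A_3$ similarly. More precisely, I would let $C_1$ be the red connected component containing $A_1$; since all edges between $A_1$ and $A_4$ are red, $C_1 \supseteq A_1 \cup A_4$. Likewise all edges between $A_2$ and $A_3$ are red, so $A_2$ and $A_3$ lie in a common red component; but I must be careful — that red component need not be $C_1$. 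However, the edges within the $A_i$ are arbitrary, so a priori everything could be red-connected. The cleaner move: note that every edge leaving $A_1 \cup A_4$ to $A_2 \cup A_3$ is either blue (between $A_1,A_3$ and $A_2,A_4$) or green (between $A_1,A_2$ and $A_3,A_4$), hence \emph{not} red. So $A_1 \cup A_4$ is a union of red components, and similarly $A_1 \cup A_3$ is a union of blue components (edges from $A_1 \cup A_3$ to $A_2 \cup A_4$ are red or green). Taking $C_1$ to be the red component containing some fixed vertex of $A_1$ and $C_2$ the blue component containing that same vertex, we get $C_1 \subseteq A_1 \cup A_4$ and $C_2 \subseteq A_1 \cup A_3$. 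Then $C_1 \cap C_2 \subseteq A_1$, $A_4 \setminus C_2 \subseteq C_1 \setminus C_2$... wait, I need the four intersections nonempty, so I should instead argue that $C_1 \cap C_2 \ni$ (the chosen vertex of $A_1$), that $C_2 \setminus C_1$ meets $A_3$ (pick a vertex of $A_3$; it is blue-joined to $A_1$ so lies in $C_2$, and it is outside $A_1 \cup A_4 \supseteq C_1$), that $C_1 \setminus C_2$ meets $A_4$ similarly, and that the complement of $C_1 \cup C_2$ meets $A_2$ (a vertex of $A_2$ is outside $A_1 \cup A_4 \supseteq C_1$ and outside $A_1 \cup A_3 \supseteq C_2$). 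Since $A_1, A_2, A_3, A_4$ are all nonempty, all four sets are nonempty.

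For the converse, suppose we are given a red component $C_1$ and a blue component $C_2$ making all four ``quadrant'' sets nonempty. I would set
$$A_1 = C_1 \cap C_2, \quad A_2 = (V \setminus C_1) \cap (V \setminus C_2), \quad A_3 = (V \setminus C_1) \cap C_2, \quad A_4 = C_1 \cap (V \setminus C_2),$$
where $V = V(K_n)$. By hypothesis these are nonempty and they clearly partition $V$. Now I check the six between-class colour conditions. Edges between $A_1$ and $A_4$: both lie in $C_1$, and $A_4$ lies outside $C_2$ while $A_1$ lies inside $C_2$, so such an edge is not blue (it would merge $A_4$ into $C_2$) — and I need it to be not green either. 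Here is where I use that $C_1$ is a \emph{whole} red component: an edge between $A_1 \subseteq C_1$ and $A_4 \subseteq C_1$ stays within the component $C_1$, which tells me nothing directly about its colour. Hmm — so I instead argue: the edge is not blue because $A_1 \subseteq C_2$ and $A_4 \cap C_2 = \emptyset$ and $C_2$ is a blue component (a blue edge from $A_1$ to $A_4$ would force $A_4 \subseteq C_2$). The edge is not green: consider the green colour class. I claim edges between $C_1$ and $V \setminus C_1$ are never red (that is immediate, $C_1$ is a red component) — so they are blue or green. Combined with ``not blue'' from the $C_2$ argument when one endpoint is in $C_2$ and the other is not... but $A_1, A_4$ are both related to $C_1$ the same way. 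The resolution: edges between $A_1$ and $A_4$ have \emph{both} endpoints in $C_1$, so they could be red. I think the correct bookkeeping is: $A_1 \cup A_3 = C_2$ is a union of... no, $A_1 \cup A_3 = C_2$ exactly, a single blue component. So edges from $A_1 \cup A_3$ to $A_2 \cup A_4 = V \setminus C_2$ are not blue, i.e. red or green. And $A_1 \cup A_4 = C_1$, so edges from $A_1 \cup A_4$ to $A_2 \cup A_3 = V \setminus C_1$ are not red, i.e. blue or green. Intersecting: an $A_1$–$A_2$ edge is (not blue) $\cap$ (not red) $=$ green; an $A_3$–$A_4$ edge is (not blue) $\cap$ (not red) $=$ green; an $A_1$–$A_3$ edge is (not red, since it's inside $C_2$... no). Let me recompute: $A_1$–$A_3$ edges have both endpoints in $C_2$ and endpoints on opposite sides of $C_1$, so they are not red, hence blue or green — but I want exactly blue. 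Use instead: $A_2$–$A_4$ edges are inside $V \setminus C_1$ hence... ugh. I will have to set this up so that each of the six pairs straddles exactly one of $\{C_1, V\setminus C_1\}$ and $\{C_2, V \setminus C_2\}$: pairs $A_1A_4$ and $A_2A_3$ straddle the $C_1$-partition only (both on the same side of the $C_2$-partition)? No: $A_1 \subseteq C_2$, $A_4 \not\subseteq C_2$, so $A_1A_4$ straddles both. The honest statement is that $\{A_1A_2, A_3A_4\}$ straddle the $C_1$-line and lie within one side of... $A_1 \subseteq C_2, A_2 \cap C_2 = \emptyset$ — straddles both again. OK: \emph{every} pair among the four classes straddles at least one of the two partition lines, and the colour is pinned down by: not-red if it straddles the $C_1$-line, not-blue if it straddles the $C_2$-line. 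Pairs straddling the $C_1$-line only would have to be within $C_2$ or within $V \setminus C_2$; that's $A_1A_4$? $A_1 \subseteq C_2$, $A_4 \subseteq V\setminus C_2$ — no. There is in fact \emph{no} pair straddling only one line, which is the source of my confusion, so I cannot force a unique colour this way and the converse as I've set it up is simply not true without more.

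Given the above, the main obstacle is clearly the converse direction: the naive quadrant construction does not visibly give the rigid colour pattern of Definition~\ref{4partitedefinition}, because ``being a whole red component'' constrains edges \emph{leaving} $C_1$ but not edges \emph{inside} $C_1$, and the definition needs three of the six between-class bundles to be monochromatic in a specific way. My plan to rescue it: do not take the raw quadrants; instead observe that knowing $C_1$ is a red component and $C_2$ a blue component forces all of $A_1 \cup A_2$, $A_3 \cup A_4$ (edges to the other side of $C_1$) to avoid red and all of $A_1 \cup A_3$, $A_2 \cup A_4$ (edges to the other side of $C_2$) to avoid blue; combine with the fact that the whole graph has only three colours so ``avoid red'' $\cap$ ``avoid blue'' $=$ green, which handles the $A_1A_2$ and $A_3A_4$ bundles (each straddles the $C_1$-line \emph{and} the $C_2$-line? recheck: $A_1 \subseteq C_1 \cap C_2$, $A_2 \subseteq (V\setminus C_1)\cap(V\setminus C_2)$ — yes, straddles both, so green). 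Then for the remaining four bundles I would pass to the green colour class: the green non-edges between classes tell us $A_1 \cup ?$ forms a green component, and a third application of the ``whole component'' trick pins the red and blue bundles. Concretely, I expect one shows $C_1, C_2$ and a suitably chosen green component $C_3$ satisfy $C_3 = A_1 \cup A_2 \cup (\text{possibly more})$, forcing $A_1 \cup A_2$ to be a union of green components and hence $A_3$–$A_4$ edges green, $A_1$–$A_3$ edges (not green since $A_1,A_3$ split by the $C_1 \cup C_2$-vs-green partition... ) — I anticipate this needs a short argument that the three ``non-edge'' structures are mutually consistent, and this is where I'd spend the real work; everything else is definition-chasing. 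For the forward direction I expect no obstacle beyond the careful choice of $C_1, C_2$ as the red and blue components through a common vertex of $A_1$, as sketched above.
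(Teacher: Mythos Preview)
Your forward direction is fine, and the paper does it a shade more directly: it simply takes $C_1 = A_1\cup A_4$ and $C_2 = A_2\cup A_4$, noting that each is genuinely a connected component in its colour (all $A_1$--$A_4$ edges are red, all $A_2$--$A_4$ edges are blue, and no red, respectively blue, edge leaves the set). Your version via ``the component through a fixed vertex of $A_1$'' lands on the same components after a little more bookkeeping.

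Your difficulty with the converse is not a failure of technique: the converse as stated is actually \emph{false}, so no completion of your plan can succeed. On $K_4$, colour edge $14$ red, edges $13$ and $24$ blue, and edges $12$, $23$, $34$ green. Then $C_1=\{1,4\}$ is a red component and $C_2=\{2,4\}$ is a blue component with all four quadrants $\{4\},\{1\},\{2\},\{3\}$ nonempty, yet the colouring has only one red edge and hence cannot be $4$-partite. Your diagnosis was exactly right: of the six between-class bundles, only the two that straddle both partition lines (the ``$A_1$--$A_2$'' and ``$A_3$--$A_4$'' bundles) are forced green; each of the other four straddles just one line and is merely forbidden one colour. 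The paper's own proof of this direction just writes ``This ensures that the sets $A_1,A_2,A_3,A_4$ form the classes of a $4$-partite colouring of $K_n$'', skipping precisely the step you got stuck on. Your instinct to bring in the green colour class is, however, the correct repair --- with an extra hypothesis. If one also assumes the green colour class is disconnected (and this \emph{is} the only setting in which the paper ever invokes the lemma, inside the proof of Theorem~\ref{not4partitecase}), then the forced-green bundles place $A_1\cup A_2$ and $A_3\cup A_4$ in distinct green components, so none of the remaining four bundles can be green, which pins each of them to its required single colour.
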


\begin{proof}
Suppose that we have a {red} component $C_1$ and a {blue} component $C_2$ as in the statement of the lemma.  Let  $A_1=C_1\cap (V(K_n)\setminus C_2)$, $A_2=(V(K_n)\setminus C_1)\cap C_2$, $A_3=(V(K_n)\setminus C_1)\cap (V(K_n)\setminus C_2)$, and $A_4=C_1\cap C_2$.  This ensures that the sets $A_1$, $A_2$, $A_3$, and $A_4$ form the classes of a 4-partite colouring of $K_n$.

For the converse, suppose that $A_1$, $A_2$, $A_3$, and $A_4$  form the classes of a 4-partite colouring.  Choose $C_1=A_1\cup A_4$ and $C_2=A_2 \cup A_4$ to obtain components as in the statement of the lemma.  
\end{proof}

We are now ready to prove Theorem~\ref{not4partitecase}.
\begin{proof}[Proof of Theorem \ref{not4partitecase}]
 The two main cases that we will consider are when $K_n$ is connected in some colour, and when $K_n$ is disconnected in all three colours.

Suppose that $K_n$ is connected in {red}.  Apply Lemma~\ref{path-bipartiteconnected} to the {red} colour class of $K_n$.  If case (i) of Lemma~\ref{path-bipartiteconnected} occurs, then the theorem follows since we can take $P$ as our {red} path, the single vertex as our {blue} path and the empty set as our {green} path.  so, suppose that case (ii) of Lemma~\ref{path-bipartiteconnected} occurs, giving us a partition of $K_n$ into a {red} path $P$ and three sets $A$, $B_1$, and $B_2$ such that $|A|=|B_1|+|B_2|$ and all the edges between $A$, $B_1$, and $B_2$ are {blue} or {green}. 

If the colouring on $B(A, B_1\cup B_2)$ is not split, we can apply Theorem~\ref{bipartitepartition} to partition $B(A, B_1\cup B_2)$  into a {blue} path and a {green} path proving the theorem.

So, assume $B(A, B_1\cup B_2)$ is split with classes $X_1$, $X_2$, $Y_1$, and $Y_2$, such that $A=X_1 \cup X_2$ and $B_1\cup B_2= Y_1\cup Y_2$.
Then, the fact that $B_1$, $B_2$, $Y_1$, and $Y_2$ are nonempty implies that one of the following holds.
\begin{enumerate} [(i)]
 \item $B_1\cap Y_1 \neq \emptyset$ and $B_2\cap Y_2 \neq \emptyset$.
 \item $B_1\cap Y_2 \neq \emptyset$ and $B_2\cap Y_1 \neq \emptyset$.
\end{enumerate}

Assume that (i) holds.  Choose $y_1\in B_1\cap Y_1$ and $y_2 \in B_2\cap Y_2$.  The edge $y_1 y_2$ must be {blue} or {green} since it lies between $B_1$ and $B_2$.  Assume that $y_1 y_2$ is {blue}.  Apply Lemma~\ref{splitthreepaths} to partition $B(A, B_1\cup B_2)$ into a {blue} path $Q$ ending with $y_1$, a {blue} path $R$ starting from $y_2$ and a {green} path $S$.  By joining $Q$ and $R$, we obtain a partition of $G$ into three monochromatic paths $P$, $Q + R$, and $S$, all of different colours.  The cases when (ii) holds or when the edge $y_1 y_2$ is {green} are dealt with similarly.

\

The same argument can be used if $K_n$ is connected in {blue} or {green}.  So, for the remainder of the proof, we assume that all the colour classes are disconnected.  Let $C$ be the largest connected component in any colour class.  Without loss of generality we may suppose that $C$ is a {red} connected component.  Let $D$ be a {blue} connected component.  Let $C^c=V(K_n)\setminus C$ and $D^c=V(K_n)\setminus D$.  One of  the sets $C\cap D$, $C^c\cap D$, $C\cap D^c$, or $C^c\cap D^c$ must be empty.  Indeed if all these sets were nonempty, then Lemma~\ref{4partite} would imply that the colouring is 4-partite, contradicting the assumption of the theorem.

We claim that $D\subseteq C$ or  $D^c\subseteq C$ holds.  To see this consider four cases depending on which of $C\cap D$, $C^c\cap D$, $C\cap D^c$ or $C^c\cap D^c$ is empty.
\begin{itemize}
 \item $C\cap D = \emptyset$ implies that all the edges between $C$ and $D$ are {green}.  This contradicts $C$ being the largest component in any colour.
 \item $C^c\cap D = \emptyset$ implies that $D\subseteq C$.
 \item $C\cap D^c = \emptyset$ implies that $C\subseteq D$.  Since $C$ is the largest component of any colour, this means that $C=D$.  
 \item $C^c\cap D^c = \emptyset$ implies that $D^c \subseteq C$.
\end{itemize}

If $D\subseteq C$ holds, then choose $v\in D$.  If $D^c\subseteq C$ holds, then choose $v \in D^c$.  In either case all the edges between $v$ and $C^c$ must be {green}.

Apply Lemma~\ref{path-bipartite} to the {red} colour class of $K_n$ in order to obtain a partition of $K_n$ into a {red} path $P$ and two sets $A$ and $B$ such that $|A|=|B|$ and all the edges between $A$ and $B$ are colours $2$ or $3$.  In addition, $P$ is either empty or starts at $v$.  If either of the graphs $K_n[A]$ or $K_n[B]$ is disconnected in {red}, then we can proceed just as we did after we applied Lemma~\ref{path-bipartiteconnected} in the previous part of the theorem.  So assume that both $K_n[A]$ and $K_n[B]$ are connected in {red}.  We claim that one of the sets $A$ or $B$ must be contained in $C^c$.  Indeed otherwise $C$ would intersect each of $P$, $A$, and $B$.  Since $P$, $K_n[A]$, and $K_n[B]$ are connected in {red}, this would imply that $C= P\cup A \cup B =K_n$ contradicting $K_n$ being disconnected in {red}.  Without loss of generality we may assume that $B \subseteq C^c$.  Therefore all the edges between $v$ and $B$ are  {green}.  

As before, if  the colouring on $B(A, B)$ is not split, we can apply Theorem~\ref{bipartitepartition} to partition $B(A, B)$  into a {blue} path and a {green} path.  Therefore assume that the colouring on $B(A,B)$ is split.

If the path $P$ is empty, then we must have $v \in A$.  Lemma \ref{split} leads to a contradiction, since we know that all the edges between $v$ and $B$ are {green}, and $B(A, B)$ is split.  

Therefore the path $P$ is nonempty.  We know that $B(A, B)$ is split with classes $X_1$, $X_2$, $Y_1$, and $Y_2$, such that $A=X_1 \cup X_2$ and $B= Y_1\cup Y_2$.
Choose $y_1\in Y_1$ and $y_2 \in Y_2$ arbitrarily.  Apply Lemma~\ref{splitthreepaths} to $B(A,B)$ to partition $B(A,B)$ into a {green} path $Q$ ending with $y_1$, a {green} path $R$ starting from $y_2$, and a {blue} path $S$.  Notice that the edges $y_1v$ and $vy_2$ are both {green}, so $P -\{v\}$, $S$, and $Q + \{v\} + R$ give a partition of $K_n$ into three monochromatic paths, all of different colours.

\end{proof}

\subsection{Proof of Theorem~\ref{4partitecase}}
In this section, we prove Theorem~\ref{4partitecase}.  The proof has a lot in common with the proof of a similar theorem in~\cite{Szemeredi1}. 

\begin{proof}
Let $A_1$, $A_2$, $A_3$, and $A_4$ be the classes of the 4-partition of $K_n$, with colours between the classes as in Definition~\ref{4partitedefinition}.    Our proof will be by induction on $n$.  The initial case of the induction will be $n=4$, since for smaller $n$ there are no 4-partite colourings of $K_n$.  For $n=4$, the result is trivial.  Suppose that the result holds for $K_m$ for all $m< n$. 

For $i=1$, $2$, $3$, and $4$ we assign three integers $r_i$, $b_i$, and $g_i$ to $A_i$ corresponding to the three colours as follows:
\begin{enumerate}[(i)]
\item Suppose that $A_i$ can be partitioned into three nonempty monochromatic paths $R_i$, $B_i$, and $G_i$ of colours {red}, {blue}, and {green} respectively.  In this case, let $r_i=|R_i|$, $b_i=|B_i|$, and $g_i=|G_i|$.
\item Suppose that $A_i$ can be partitioned into three nonempty monochromatic paths $P_1$, $P_2$, and $Q$ such that $P_1$ and $P_2$ are coloured the same colour and $Q$ is coloured a different colour.  
If $P_1$ and $P_2$ are {red}, then we let $r_i=|P_1|+|P_2|-1$.  If $Q$ is {red}, then we let $r_i=|Q|$.  If none of $P_1$, $P_2$, or $Q$ are {red}, then we let $r_i=1$.  We do the same for ``{blue}" and ``{green}" to assign values to $b_i$ and $g_i$ respectively.  As a result we have assigned the values $|P_1|+|P_2|-1$, $|Q|$, and $1$ to some permutation of the three numbers $r_i$, $b_i$, and $g_i$. 
\item Suppose that $|A_i|\leq 2$.  In this case, let $r_i=b_i=g_i=1$.
\end{enumerate}

For each $i \in \{1,2,3,4\}$, $A_i$ will always be in at least one of the above three cases.  To see this, depending on whether the colouring on $A_i$ is $4$-partite or not, apply either Theorem \ref{not4partitecase} or the inductive hypothesis of Theorem \ref{4partitecase} to $A_i$, in order to partition $A_i$ into three monochromatic paths $P_1$, $P_2$, and $P_3$ at most two of which are the same colour.  If $|A_i|\geq 3$ then we can assume that  $P_1$, $P_2$, and $P_3$ are nonempty.  Indeed if $P_1$, $P_2$, or $P_3$ are empty, then we can remove endpoints from the longest of the three paths and add them to the empty paths to obtain a partition into three nonempty paths, at most two of which are the same colour.  Therefore, if $|A_i|\geq 3$, then either Case (i) or (ii) above will hold, whereas if $A_i\leq 2$, then Case (iii) will hold.

For each $i \in \{1,2,3,4\}$, note that $r_i$, $b_i$, and $g_i$ are positive and satisfy $r_i + b_i + g_i \geq |A_i|$.
We will need the following definition.


\begin{definition}
A {red} linear forest $F$ is $A_i$-\textbf{filling} if $F$ is contained in $A_i$, and either $F$ consists of one path of order $r_i$, or $F$ consists of two paths $F_1$ and $F_2$ such that $|F_1|+|F_2|=r_i+1$.
\end{definition}

{Blue} or {green} $A_i$-filling linear forests are  defined similarly, exchanging the role of $r_i$ for $b_i$ or $g_i$ respectively.  We will need the following two claims.

\begin{claim}\label{twoforests}
Suppose that $i\in \{1,2,3,4\}$, and $|A_i|\geq 2$.  There exist two disjoint $A_i$-filling linear forests with different colours for any choice of two different colours.
\end{claim}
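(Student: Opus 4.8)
The plan is to do a case analysis based on which of the three cases (i), (ii), (iii) in the definition of $r_i$, $b_i$, $g_i$ the set $A_i$ falls into, and to produce the two filling linear forests by splitting the monochromatic paths obtained from the partition of $A_i$. The key observation is that in every case $A_i$ was partitioned into three monochromatic paths $P_1$, $P_2$, $P_3$ (nonempty when $|A_i| \ge 3$), and the numbers $r_i$, $b_i$, $g_i$ were \emph{defined} precisely so that a monochromatic path (or pair of paths) of the appropriate colour and total size $r_i$ (resp.\ $r_i + 1$ for two paths) sits inside $A_i$. So for a single colour, say red, an $A_i$-filling red linear forest essentially already exists: if the red part of the partition is a single path $Q$ of order $r_i = |Q|$, that path is filling; if red appears as two paths $P_1, P_2$ with $r_i = |P_1| + |P_2| - 1$, then $\{P_1, P_2\}$ is \emph{almost} filling but has total size $r_i + 1$, which is exactly the ``two paths'' alternative in the definition; if red is assigned the value $1$ (it does not appear, or $|A_i| \le 2$), then any single vertex of $A_i$ is a filling red forest of order $1$.

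The real content is getting \emph{two disjoint} filling forests of two prescribed colours simultaneously. First I would handle Case (i): $A_i = R_i \cup B_i \cup G_i$ with three nonempty monochromatic paths of the three colours and $r_i = |R_i|$, etc. Here for any two requested colours, say red and blue, we take $R_i$ as the red filling forest and $B_i$ as the blue filling forest; they are disjoint by construction, and we simply discard $G_i$ — wait, that is not allowed since the forests need not cover $A_i$, so this is fine. Similarly Case (iii) with $|A_i| = 2$: the two vertices of $A_i$, call them $u$ and $w$, give $\{u\}$ as a filling forest of any colour (order $r_i = b_i = g_i = 1$) and $\{w\}$ as a filling forest of any other colour, disjointly. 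The main work is Case (ii), where $A_i = P_1 \cup P_2 \cup Q$ with $P_1, P_2$ of one colour (say the value-carrying colour, whose assigned number is $|P_1| + |P_2| - 1$), $Q$ of a second colour (assigned $|Q|$), and the third colour assigned $1$. If the two requested colours are exactly these first two, take $\{P_1, P_2\}$ (total size $r_i + 1$, a valid two-path filling forest) and $Q$ (a single path of the right order); they are disjoint. If one requested colour is the ``value $1$'' colour $c$, I would peel a single endpoint vertex $x$ off whichever of $P_1$, $P_2$, $Q$ has the appropriate colour and is nonempty (here I use $|A_i| \ge 2$, so at least one of these paths has $\ge 2$ vertices, actually I should be slightly more careful and use that the relevant path has enough vertices), use $\{x\}$ as the colour-$c$ filling forest, and use the remaining vertices of that path together with the other paths to build the second requested colour's filling forest — the second colour is one of the two that actually appears, so this reduces to the previous subcase after removing at most one vertex, and the filling definition's flexibility (a single path of order $r_j$, or two paths summing to $r_j + 1$) absorbs the perturbation.

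The step I expect to be the main obstacle is Case (ii) when one of the two requested colours is the colour assigned the value $1$: I must exhibit a single vertex of that colour's ``filling'' forest while simultaneously leaving enough structure of the correct colour and size for the other requested forest, and the bookkeeping of which path loses a vertex and whether the resulting piece is one path or two must be tracked against the two allowed shapes of a filling forest. I expect this to come down to checking a handful of small sub-subcases (which requested colour plays the value-$1$ role, whether the surviving piece is $P_1 \cup P_2$ minus a vertex or $Q$ minus a vertex), each resolved by the same mechanism: removing an endpoint from a monochromatic path either shortens it by one or splits off nothing, and the definition of $A_i$-filling was engineered to tolerate exactly a $\pm 1$ discrepancy split across at most two paths. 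The hypothesis $|A_i| \ge 2$ is what guarantees we never need to remove a vertex from an empty or singleton path in a way that destroys the remaining forest.
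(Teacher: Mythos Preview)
Your case analysis is correct and is exactly the unpacking of the paper's proof, which consists of the single sentence ``holds trivially from the definition of $r_i$, $b_i$, and $g_i$.'' Your anticipated ``main obstacle'' in Case~(ii) is illusory: the two filling forests need not cover $A_i$ (as you yourself noted in Case~(i)), so when one requested colour $c$ has value~$1$ you simply take a single vertex from whichever of $P_1,P_2,Q$ is \emph{not} used for the other requested colour's forest---if the other colour is that of $P_1,P_2$ take $\{P_1,P_2\}$ and a vertex of $Q$, if it is that of $Q$ take $Q$ and a vertex of $P_1$---with no peeling or perturbation to absorb.
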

\begin{proof}
Claim \ref{twoforests} holds trivially from the definition of $r_i$, $b_i$, and $g_i$.
\end{proof}

\begin{claim}\label{onepath}
Suppose that $i,j\in \{1,2,3,4\}$ such that $i\neq j$ and $B(A_i, A_j)$ is {red}. Let $m$ be an integer such that the following hold.
\begin{align}
0\leq m &\leq r_i\label{eq:onepath1},\\
|A_i|-m &\leq  |A_j|. \label{eq:onepath2}
\end{align}
There exists a {red} path $P$ from $A_i$ to $A_i$, of order $2|A_i|-m$, covering all of $A_i$ and any set of $A_i-m$ vertices in $A_j$.
\end{claim}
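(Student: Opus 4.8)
The plan is to build the path $P$ greedily by alternating between $A_i$ and $A_j$ along the red complete bipartite graph $B(A_i,A_j)$, using the structure of the partition of $A_i$ recorded by $r_i$. The point of the hypothesis $m\le r_i$ is precisely that $A_i$ admits a red linear forest on its vertices that uses exactly $r_i+1$ vertices as ``internal anchors'' in a way that lets us interleave $r_i$-many jumps into $A_j$; more concretely, I would first produce, using Claim~\ref{twoforests} (or Case (i)--(iii) directly), a red $A_i$-filling linear forest $F$, i.e. either a single red path on $r_i$ vertices of $A_i$ together with the leftover $|A_i|-r_i$ vertices of $A_i$ as isolated vertices, or two red paths $F_1,F_2$ with $|F_1|+|F_2|=r_i+1$ plus leftover isolated vertices. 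Every vertex of $A_i$ that is not covered by the red forest $F$ will be spliced into $P$ only via its (red) edges to $A_j$.

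The construction then goes as follows. Order the components of the ``red structure on $A_i$'' — that is, the paths of $F$ followed by the $|A_i|-|F|$ isolated vertices of $A_i$ — as $W_1,\dots,W_s$. Between consecutive blocks $W_t$ and $W_{t+1}$ we insert a single vertex of $A_j$ (all these edges are red since $B(A_i,A_j)$ is red), and we also need to insert enough vertices of $A_j$ so that the total number of $A_j$-vertices used is exactly $|A_i|-m$. I would do this by distributing the $A_j$-vertices so that $P$ starts in $A_i$, ends in $A_i$, and alternates in long runs: each isolated vertex of $A_i$ forces an $A_j$-vertex on each side, and we have room to absorb extra $A_j$-vertices by making some of the runs through $A_j$ longer. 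A clean way to organize the bookkeeping: the number of $A_j$-vertices that a red path from $A_i$ to $A_i$ covering all of $A_i$ can accommodate ranges over all integers from (roughly) the number of ``gaps'' we are forced to fill, up to $|A_i|-1$ (when $A_i$ is split into as many red pieces as possible, i.e. $r_i$ forces at least one jump, and the remaining $|A_i|-r_i$ isolated vertices each force two); the two inequalities \eqref{eq:onepath1} and \eqref{eq:onepath2} guarantee that the target value $|A_i|-m$ lies in this range — \eqref{eq:onepath1} keeps $|A_i|-m$ from being too small (so the red structure of $A_i$ has enough pieces to force that many jumps, equivalently $|A_i|-m \ge |A_i|-r_i$), and \eqref{eq:onepath2} keeps $|A_i|-m \le |A_j|$ so that $A_j$ actually contains that many vertices. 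The resulting path has order $|A_i| + (|A_i|-m) = 2|A_i|-m$, as required.

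The main obstacle is the careful case analysis needed to show that for \emph{every} value of $m$ in the allowed range one can realize exactly $|A_i|-m$ vertices of $A_j$ — the extreme cases $m=0$ and $m=r_i$ need the two ends of the inequality range to match up, and the ``two-path'' subcase of the $A_i$-filling forest (where $|F_1|+|F_2|=r_i+1$) behaves slightly differently from the single-path subcase and from Case (iii) where $|A_i|\le 2$; one has to check each against \eqref{eq:onepath1}--\eqref{eq:onepath2}. I expect the verification to be routine interval arithmetic once the alternating construction is set up, so the real work is just pinning down the construction precisely and confirming the endpoint count $2|A_i|-m$ and the ``from $A_i$ to $A_i$'' condition in each subcase.
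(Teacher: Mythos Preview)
Your overall strategy matches the paper's: take a red $A_i$-filling linear forest $F$, and thread the vertices of $A_i$ through $A_j$ using the red bipartite edges of $B(A_i,A_j)$. However, your mechanism for hitting the exact target of $|A_i|-m$ vertices in $A_j$ is wrong as stated. You write that extra $A_j$-vertices can be absorbed ``by making some of the runs through $A_j$ longer''. This does not work: between two consecutive $A_i$-blocks the only red edges you may use are those of $B(A_i,A_j)$, so you can insert exactly one $A_j$-vertex there, never more --- you have no information about the colours of edges inside $A_j$. With your block decomposition into $s=|A_i|-r_i+1$ pieces (the paths of $F$ together with the isolated leftover vertices), a path from $A_i$ to $A_i$ covering all blocks uses precisely $s-1=|A_i|-r_i$ vertices of $A_j$, which is only the $m=r_i$ endpoint of your range. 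To reach smaller $m$ you must create \emph{more gaps}, not longer runs.

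The paper achieves this by simply shrinking the filling forest before building the path. In the one-path case, truncate $F$ to a subpath $F'$ of order $m$ (legal by \eqref{eq:onepath1}); alternate the remaining $|A_i|-m$ vertices of $A_i$ with $|A_i|-m$ vertices of $A_j$ (legal by \eqref{eq:onepath2}) to form a red path $P$ from $A_i$ to $A_j$, and return $P+F'$. In the two-path case, truncate to $F_1',F_2'$ with $|F_1'|+|F_2'|=m+1$, alternate the remaining $|A_i|-m-1$ vertices of $A_i$ with $|A_i|-m-1$ vertices of $A_j$, pick one more vertex $v\in A_j$, and return $P+F_1'+\{v\}+F_2'$. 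Either way the order is $2|A_i|-m$ and both endpoints lie in $A_i$, with no interval bookkeeping needed. Your argument is easily repaired along these lines: rather than trying to lengthen $A_j$-runs, peel endpoints off the paths of $F$ one at a time, turning them into new isolated blocks; each such step increases the number of $A_j$-vertices by exactly one, letting you hit any value between $|A_i|-r_i$ and $|A_i|-1$.
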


\begin{proof} 

Note that we can always find an $A_i$-filling linear forest, $F$.  If $|A_i|=1$ this is trivial, and if $|A_i|\geq 2$, then this follows from Claim \ref{twoforests}.

Suppose that $F$ consists of one path of order $r_i$. By (\ref{eq:onepath1}), we can shorten $F$ to obtain a new path $F'$ of order $m$.  By (\ref{eq:onepath2}), we can choose a {red} path, $P$, from $A_i$ to $A_j$ consisting of $A_i\setminus F'$ and any $|A_i|-m$ vertices in $A_j \setminus F'$.  The path $P+F'$ satisfies the requirements of the claim.

Suppose that $F$ consists of two paths $F_1$ and $F_2$ such that $|F_1|+|F_2|=r_i+1$.  By (\ref{eq:onepath1}), we can shorten $F_1$ and $F_2$ to obtain two paths $F_1'$ and $F_2'$ such that $|F_1|+|F_2|=m+1$.    By (\ref{eq:onepath2}), we can choose a {red} path, $P$, from $A_i$ to $A_j$ consisting of $A_i \setminus F'$ and any $|A_i|-m-1$ vertices in $A_j$. By (\ref{eq:onepath2}) there must be at least one vertex, $v$, in $A_j\setminus P$.  The path $P+F_1+\{v\}+F_2 $ satisfies the requirements of the claim.

\end{proof}

We can formulate versions of Claim \ref{onepath} for the colours {blue} or {green} as well, replacing $r_i$ with $b_i$ or $g_i$ respectively.

To prove Theorem \ref{4partitecase}, we will consider different combinations of values of $r_i$, $b_i$, and $g_i$ for $i = 1$, $2$, $3$, and $4$ to construct a partition of $K_n$ into three monochromatic paths in each case.  

  If a partition of $K_n$ into monochromatic paths contains edges in the graph $B(A_i, A_j)$ for $i \neq j$, we say that $B(A_i, A_j)$ is a \emph{target component} of the partition.  Note that a partition of $K_n$ into three monochromatic paths can have at most three target components.  This is because a monochromatic path can pass through edges in at most one of graphs $B(A_i, A_j)$.  

There are two kinds of partitions into monochromatic paths which we shall construct.  
\begin{itemize}
 \item We say that a partition of $K_n$ is \emph{star-like} if the target components are $B(A_i,A_j)$, $B(A_i,A_k)$, and $B(A_i, A_l)$, for $(i,j,k,l)$ some permutation of $(1,2,3,4)$.  In this case, all the paths in the partition will have different colours.
\item We say that a partition of $K_n$ is \emph{path-like} if the target components are $B(A_i,A_j)$, $B(A_j,A_k)$, and $B(A_k, A_l)$, for $(i,j,k,l)$ some permutation of $(1,2,3,4)$.  In this case, two of the paths in the partition will have the same colour.
\end{itemize}

For $i\in \{1,2,3,4\}$, it is possible to write down sufficient conditions on $|A_i|$, $r_i$, $b_i$, and $g_i$ for $K_n$ to have a partition into three monochromatic paths with given target components.

\begin{claim} \label{star}
Suppose that the following holds: 
\begin{equation} \label{eq:star}
|A_1| + |A_2| + |A_3| \leq |A_4| + r_1 + b_2 + g_3. 
\end{equation}
 Then, $K_n$ has a star-like partition with target components $B(A_4,A_1)$, $B(A_4,A_2)$, and $B(A_4,A_3)$ of colours {red}, {blue}, and {green} respectively.
\end{claim}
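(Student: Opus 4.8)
The plan is to build the three monochromatic paths directly out of the three bridges $B(A_4,A_1)$, $B(A_4,A_2)$, $B(A_4,A_3)$, which by Definition~\ref{4partitedefinition} are monochromatic in red, blue, and green respectively. I want a red path $P_1$ covering $A_1$ together with a block of $A_4$, a blue path $P_2$ covering $A_2$ together with another block of $A_4$, and a green path $P_3$ covering $A_3$ together with the remaining block of $A_4$, the three blocks partitioning $A_4$. Each $P_i$ is produced by the appropriate colour-version of Claim~\ref{onepath} applied to the pair $(A_i,A_4)$: for any $m_1$ with $0\le m_1\le r_1$ and $|A_1|-m_1\le |A_4|$ there is a red path covering $A_1$ and any prescribed $|A_1|-m_1$ vertices of $A_4$, and similarly a blue path for $m_2\le b_2$ and a green path for $m_3\le g_3$. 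Since $P_1$, $P_2$, $P_3$ have their non-$A_4$ parts in the pairwise disjoint classes $A_1$, $A_2$, $A_3$, they will automatically be pairwise disjoint once their $A_4$-parts are chosen disjoint.

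The only thing that remains is to choose $m_1,m_2,m_3$ so that the three $A_4$-parts partition $A_4$: we need $(|A_1|-m_1)+(|A_2|-m_2)+(|A_3|-m_3)=|A_4|$, i.e. $m_1+m_2+m_3=|A_1|+|A_2|+|A_3|-|A_4|$, subject to $\max(0,|A_i|-|A_4|)\le m_i$ together with $m_1\le r_1$, $m_2\le b_2$, $m_3\le g_3$. So the task reduces to the purely arithmetic statement that an integer point $(m_1,m_2,m_3)$ satisfying these box constraints with the prescribed coordinate sum exists. The upper bound required is $|A_1|+|A_2|+|A_3|-|A_4|\le r_1+b_2+g_3$, which is exactly hypothesis~\eqref{eq:star}; the lower bound required is $\sum_i\max(0,|A_i|-|A_4|)\le |A_1|+|A_2|+|A_3|-|A_4|$, which I would check by splitting on how many of the $|A_i|$ exceed $|A_4|$. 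Once such $m_i$ are fixed, I partition $A_4$ into three sets of sizes $|A_1|-m_1$, $|A_2|-m_2$, $|A_3|-m_3$, feed them into the three invocations of Claim~\ref{onepath}, and read off a partition of $K_n$ into a red path, a blue path, and a green path whose only target components are $B(A_4,A_1)$, $B(A_4,A_2)$, $B(A_4,A_3)$; this is a star-like partition of the required type.

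I expect the main obstacle to be the regime $|A_4|>|A_1|+|A_2|+|A_3|$, where the three bridge-paths together are too short to exhaust $A_4$ and the arithmetic above has no solution. Here one cannot avoid exploiting the (arbitrarily coloured) interior of $A_4$: one way out is to first partition $A_4$ into at most three monochromatic paths via Theorem~\ref{not4partitecase} or the induction hypothesis of Theorem~\ref{4partitecase}, and then attach these pieces to single-vertex endpoints in $A_1$, $A_2$, $A_3$ through the three bridges so that the resulting three paths are red, blue, green and still meet all three bridges. Alternatively, Claim~\ref{star} may only ever be invoked in the proof of Theorem~\ref{4partitecase} when $|A_4|\le |A_1|+|A_2|+|A_3|$, in which case this subcase does not arise and the clean construction above suffices.
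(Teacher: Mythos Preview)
Your main construction (choosing $m_1,m_2,m_3$ so that the three bridge paths exactly partition $A_4$) is correct in the regime $|A_1|+|A_2|+|A_3|\ge |A_4|$, but your handling of the complementary regime has a genuine gap.

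First, option (b) is false. In the proof of Theorem~\ref{4partitecase}, Claim~\ref{star} (in the form~\eqref{eq:4star}) is invoked only in Case~2, where one has derived inequalities such as $|A_4|>|A_1|+|A_2|+|A_3|+g_1+r_2+r_3$. So the claim is applied \emph{precisely} when $|A_4|>|A_1|+|A_2|+|A_3|$, never in your ``clean'' regime.

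Second, option (a) as you describe it does not cover $A_1\cup A_2\cup A_3$. Attaching the three pieces of a partition of $A_4$ to ``single-vertex endpoints in $A_1,A_2,A_3$'' produces three monochromatic paths covering $A_4$ together with at most three further vertices; the rest of $A_1,A_2,A_3$ is left out.

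The paper avoids the case split entirely. Rather than choosing the $m_i$ so that the bridge paths fill $A_4$ exactly, it always takes $m_1=r_1$, $m_2=b_2$, $m_3=g_3$ (the maximum permitted values). By~\eqref{eq:star} the three resulting paths $P_1,P_2,P_3$ then fit disjointly inside $A_4$ but may leave a remainder $A_4\setminus(P_1\cup P_2\cup P_3)$. That remainder is a smaller complete graph and is partitioned, by Theorem~\ref{not4partitecase} or the inductive hypothesis, into three monochromatic paths $Q_1,Q_2,Q_3$, at most two of the same colour. The point you are missing is that each $P_i$ has \emph{both} endpoints in $A_i$ (this is part of the conclusion of Claim~\ref{onepath}), so each $P_i$ offers two free attachment points into $A_4$ via the appropriately coloured bridge. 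If $Q_1,Q_2,Q_3$ have three distinct colours one attaches each $Q_j$ to the matching $P_i$; if two share a colour, say $Q_1,Q_2$ are both red, one forms $Q_1+P_1+Q_2$ and attaches the remaining $Q_3$ to the $P_i$ of its colour. This yields a uniform argument with no arithmetic case analysis on the size of $A_4$.
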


\begin{proof}
Using (\ref{eq:star}), we can find three disjoint subsets $S_1$, $S_2$, and $S_3$ of $A_4$ such that  $|S_1|=|A_1|-r_1$, $|S_2|=|A_2|-b_2$, and $|S_3|=|A_3|-g_3$ all hold.  By Claim \ref{onepath} there is a {red} path $P_1$ with vertex set  $A_1\cup S_1$, a {blue} path $P_2$ with vertex set $A_2\cup S_2$, and a {green} path $P_3$ with vertex set $A_3\cup S_3$.  The paths $P_1$, $P_2$, and $P_3$ are pairwise disjoint and have endpoints in $A_1$, $A_2$, and $A_3$ respectively.

Depending on whether $A_4\setminus (P_1 \cup P_2 \cup P_3)$ is $4$-partite or not, apply either Theorem~\ref{not4partitecase} or the inductive hypothesis to find a partition of $A_4\setminus (P_1 \cup P_2 \cup P_3)$ set into three monochromatic paths $Q_1$, $Q_2$, and $Q_3$ at most two of which are the same colour.

We will join the paths $P_1$, $P_2$, $P_3$, $Q_1$, $Q_2$, and $Q_3$ together to obtain three monochromatic paths partitioning all the vertices in $K_n$.  

Suppose that all the $Q_i$ are all of different colours, with $Q_1$ {red}, $Q_2$ {blue}, and $Q_3$ {green}.  In this case $P_1+Q_1$, $P_2+Q_2$, and $P_3+Q_3$ are three monochromatic paths forming a star-like partition of $K_n$.

Suppose that two of the $Q_i$ are the same colour.  Without loss of generality, we may assume that $Q_1$ and $Q_2$ are {red} and $Q_3$ is {blue}.  In this case $Q_1+P_1+Q_2$, $P_2$, and $P_3+Q_3$ are three monochromatic paths forming a star-like partition of $K_n$.
\end{proof}

\begin{claim} \label{path}
Suppose that the following all hold:
\begin{align}
|A_1|+|A_4| &\leq |A_2|+|A_3| + b_4 + g_4+ g_1 \label{eq:path1},\\
|A_3|+|A_2| &\leq |A_1|+|A_4| + b_2 + g_2+ g_3 \label{eq:path2}, \\
|A_1| &< |A_2| + g_1 \label{eq:path3},\\
|A_3| &< |A_4| + g_3 \label{eq:path4}.
\end{align}
Then $K_n$ has a path-like partition with target components $B(A_1,A_2)$, $B(A_2,A_4)$, and $B(A_4,A_3)$ of colours {green}, {blue}, and {green} respectively.
\end{claim}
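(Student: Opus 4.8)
The plan is to imitate the proof of Claim~\ref{star}, replacing the ``star'' topology by the ``path'' configuration in which the target components are $B(A_1,A_2)$ (green), $B(A_2,A_4)$ (blue) and $B(A_4,A_3)$ (green). We build three monochromatic paths: a green path $P_1$ covering all of $A_1$ together with some of $A_2$ (using green edges of $B(A_1,A_2)$ and green internal edges of $A_1$ and $A_2$); a green path $P_3$ covering all of $A_3$ together with some of $A_4$ (symmetrically); and a blue path $P_2$ mopping up the remaining vertices of $A_2$ and of $A_4$ by alternating between them along blue edges of $B(A_2,A_4)$.

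Concretely, I would first apply the green analogue of Claim~\ref{onepath} with $i=1$, $j=2$: for $m_1$ with $0\le m_1\le g_1$ and $|A_1|-m_1\le |A_2|$ it produces a green path covering $A_1$ and \emph{any chosen} set of $|A_1|-m_1$ vertices of $A_2$; inequality~(\ref{eq:path3}) guarantees that $m_1=g_1$ is admissible, so the number of vertices of $A_2$ absorbed by $P_1$ can be prescribed anywhere in a nonempty interval. The crucial point is the freedom to choose \emph{which} vertices of $A_2$ are absorbed: one takes them to lie outside a fixed blue $A_2$-filling linear forest, so that the remainder $A_2'':=A_2\setminus P_1$ still contains a blue $A_2$-filling forest. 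One treats $A_3$ and $A_4$ symmetrically, using~(\ref{eq:path4}), to get $P_3$ and a remainder $A_4'':=A_4\setminus P_3$ containing a blue $A_4$-filling forest.

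Next I would build the blue path $P_2$ covering $A_2''\cup A_4''$ exactly as in the proof of Claim~\ref{onepath}: use the blue $A_2$-filling forest inside $A_2''$ and the blue $A_4$-filling forest inside $A_4''$ as ``cores'', alternate the remaining vertices of $A_2''$ with those of $A_4''$ along $B(A_2,A_4)$, and splice the cores in at the appropriate moments. This requires the two residual amounts to match up, and the role of inequalities~(\ref{eq:path1}) and~(\ref{eq:path2}) is precisely to make this possible: given the internal slack $b_2+g_2$ in $A_2$, the slack $b_4+g_4$ in $A_4$, the slack $g_1$ and $g_3$ in how much of $A_2$ and $A_4$ the green paths absorb, and the sizes of $A_1$ and $A_4$ bounding the alternation lengths, (\ref{eq:path1}) is the inequality that prevents $P_2$ from running short on one side and (\ref{eq:path2}) the one that prevents it on the other. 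If a bounded residual set $Z\subseteq A_2\cup A_4$ (the vertices in no blue or green filling forest) is left over, one partitions $K_n[Z]$ into three monochromatic paths with at most two of the same colour, using Theorem~\ref{not4partitecase} or the inductive hypothesis of Theorem~\ref{4partitecase}, and splices these onto the free endpoints of $P_1$, $P_2$, $P_3$, as in the endgame of Claim~\ref{star}.

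The main obstacle will be the bookkeeping needed to fit all of this together: choosing $m_1$, $m_3$ and the alternation lengths so that the blue path is feasible, while tracking the $+1$'s that appear whenever a filling forest consists of two paths rather than one and whenever a single vertex is used as a path of an arbitrary colour, so that one can verify that~(\ref{eq:path1})--(\ref{eq:path4}) really are exactly what is needed. A second, smaller subtlety is the final splicing step: unlike in Claim~\ref{star}, the two ``outer'' paths $P_1$ and $P_3$ share a colour (green), so the case analysis for attaching the three recursive paths $Q_1,Q_2,Q_3$ must be redone --- when the $Q_i$ all have distinct colours one joins the green one to $P_1$, the blue one to $P_2$, and absorbs the red one into an extension, and when two of the $Q_i$ share a colour one routes the doubled pair so that a doubled green pair is split between $P_1$ and $P_3$ while a doubled blue pair is split between the two ends of $P_2$ --- in every case producing three monochromatic paths, two green and one blue, that partition $K_n$.
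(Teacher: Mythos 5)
Your overall plan deviates from the paper's proof in a way that creates a genuine gap. You cover $A_2$ using only (a) the green analogue of Claim~\ref{onepath} with $i=1$, $j=2$, which by construction absorbs at most $|A_1|$ vertices of $A_2$, and (b) a blue path through $B(A_2,A_4)$ whose excess over $|A_4|$ is bounded by the blue $A_2$-filling forest, i.e.\ by about $b_2$. Nothing in your construction exploits the \emph{green} internal slack $g_2$ of $A_2$, yet inequality~(\ref{eq:path2}) only bounds $|A_2|$ in terms of $b_2+g_2+g_3$. Concretely, take $|A_1|=|A_3|=|A_4|=1$ and $|A_2|=100$ with $r_2=2$, $b_2=1$, $g_2=97$: all four hypotheses~(\ref{eq:path1})--(\ref{eq:path4}) hold, but your three paths cover at most a handful of the $100$ vertices of $A_2$. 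The paper handles exactly this regime (its subcase where~(\ref{eq:path6}) fails) by building a green path $P_G=G_1+\{v\}+G_2$ that threads the green $A_2$-filling forest through a vertex $v\in A_1$, so that up to $g_2+1$ vertices of $A_2$ are swallowed by internal green edges of $A_2$; you would need this (or an application of Claim~\ref{onepath} with $A_2$ in the role of the core class) to make the count work. Relatedly, your claim that the vertices of $A_2$ absorbed by $P_1$ can always be chosen to avoid a blue $A_2$-filling forest is not justified by~(\ref{eq:path3}), which only gives $|A_1|-g_1<|A_2|$, not $|A_1|-g_1\le |A_2|-b_2$.

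The proposed endgame is also broken. In Claim~\ref{star} the leftover set lies inside the single class $A_4$, and a possible red pair $Q_1,Q_2$ is absorbed as $Q_1+P_1+Q_2$ because one of the three outer paths \emph{is} red. Here your residual set $Z$ lies in $A_2\cup A_4$ and all three outer paths are green or blue, so a red path produced by the recursive call on $K_n[Z]$ has nothing to merge with, and you would end with more than three paths. The paper avoids this entirely: in the path-like case there is no recursive call and no residual set; the hypotheses~(\ref{eq:path1})--(\ref{eq:path4}) are used, via the case split on~(\ref{eq:path5}) and~(\ref{eq:path6}) and the trimming argument for $P_B'$ and $P_G'$, to make the three paths cover $K_n$ exactly.
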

\begin{proof}
 Suppose that we have
\begin{equation} \label{eq:path5}
|A_2|-|A_1|+g_1\geq |A_4|-|A_3|+g_3.
\end{equation}
 The inequality (\ref{eq:path4}), together with Claim~\ref{onepath} ensures that we can find a {green} path $P_1$ consisting of all of $A_3$ and  $|A_3|-g_3$ vertices in $A_4$.  

\

There are two subcases depending on whether the following holds or not:
\begin{equation} \label{eq:path6}
|A_2|-|A_1|\leq |A_4|-|A_3|+g_3.
\end{equation}

 Suppose that (\ref{eq:path6}) holds.  
Let $m= |A_1|-|A_2|+ |A_4|-|A_3|+g_3$.  Note that $|A_1|-m\leq |A_2|$ holds by (\ref{eq:path4}), that  $m$ is positive by (\ref{eq:path6}), and that $m$ is less than $g_1$ by (\ref{eq:path5}).
Therefore, we can apply Claim \ref{onepath} to find a {green} path $P_2$  consisting of $A_1$ and $|A_1|-m$ vertices in $A_2$.  
 There remain exactly $|A_4|-|A_3|+g_3$ vertices in each of $A_2$ and $A_4$ outside of the paths $P_1$ and $P_2$.  Cover these with a {blue} path $P_3$ giving the required partition.

\

Suppose that (\ref{eq:path6}) fails to hold. 
Note that (\ref{eq:path4}) and the negation of (\ref{eq:path6}) imply that $|A_2|> |A_1|$ which, together with the fact that $|A_1|>0$, implies that $|A_2|\geq 2$.
Therefore, we can apply Claim \ref{twoforests} to $A_2$ to obtain a {blue} $A_2$-filling linear forest, $B$, and a disjoint {green} $A_2$-filling linear forest, $G$.
We construct a {blue} path $P_B$ and a {green} path $P_G$ as follows:

Note that $A_4\setminus P_1$ is nonempty by (\ref{eq:path4}), so let $u$ be a vertex in $A_4\setminus P_1$.  If $B$ is the union of two paths $B_1$ and $B_2$ such that $|B_1|+|B_2|=b_2+1$, then let $P_B=B_1+\{v\}+B_2$.  Otherwise $B$ must be single path of order $b_1$, and we let $P_B=B$.

Similarly, let $v$ be a vertex in $A_1$.  If $G$ consists of two paths $G_1$ and $G_2$, we let $P_G=G_1+\{v\}+G_2$.  If $G$ is a single path, we let $P_G=G$.

Note that the above construction and (\ref{eq:path2}) imply that the following is true.

\begin{equation}\label{eq:path7}
|A_2\setminus (P_B\cup P_G)|\leq |A_1\setminus (P_B\cup P_G)| + |(A_4 \setminus P_1) \setminus (P_B\cup P_G)|.
\end{equation}

The negation of (\ref{eq:path6}) is equivalent to the following 

\begin{equation}\label{eq:path8}
|A_2| \geq |A_1| + |(A_4 \setminus P_1)|.
\end{equation}

Let $P_B'$ and $P_G'$ be subpaths of $P_B$ and $P_G$ respectively, such that the sum $|P_B'|+|P_G'|$ is as small as possible and we have
\begin{equation}\label{eq:path9}
|A_2\setminus (P_B'\cup P_G')|\leq |A_1\setminus (P_B'\cup P_G')| + |(A_4 \setminus P_1') \setminus (P_B'\cup P_G')|.
\end{equation}
The paths $P_B'$ and $P_G'$ are well defined by (\ref{eq:path7}). We claim that we actually have equality in (\ref{eq:path9}).  Indeed, since $A_1$, $A_2$, and $A_4 \setminus P_1'$ are all disjoint, removing a single vertex from $P_B'$ or $P_G'$ can change the inequality (\ref{eq:path9}) by at most one.  Therefore, if the inequality (\ref{eq:path9}) is strict, we know that $P_B'$ and $P_G'$ are not both empty by (\ref{eq:path8}), so we can always remove a single vertex from $P_B'$ or $P_G'$ to obtain shorter paths satisfying (\ref{eq:path9}), contradicting the minimality of $|P_B'|+|P_G'|$.

Equality in (\ref{eq:path9}) implies that $|A_2\setminus(P_1\cup P_B'\cup P_G')|=|(A_1\cup A_4)\setminus(P_1\cup P_B'\cup P_G')|$, so we can choose a {green} path $Q_G$ from $A_1$ to $A_2$ and a disjoint {blue} path $Q_B$ from $(A_4 \setminus P_1)$ to $A_2$ such that $Q_B\cup Q_G=(A_1\cup A_2 \cup A_4)\setminus (P_1\cup P_B'\cup P_G')$.  The paths $P_1$, $P_B'+Q_B$, and $P_G'+Q_G$ give us the required partition of $K_n$.

\

If the negation of (\ref{eq:path5}) holds, we can use the same method, exchanging the roles of $A_1$ and $A_3$, and of $A_2$ and $A_4$.

\end{proof}

Obviously, there was nothing special about our choice of target components in Claims~\ref{star} and~\ref{path}.  We can write similar sufficient conditions for there to be a star-like partition or a path-like partition for any choice of target components.  
To prove Theorem~\ref{4partitecase}, we shall show that either the inequality in Claim~\ref{star} holds or all four inequalities from Claim~\ref{path} hold for some choice of target components.  

Without loss of generality we can assume that the following holds:
\begin{equation}\label{eq:order2}
|A_1|\leq|A_2|\leq|A_3|\leq|A_4|.
\end{equation}
Consider the following instances of Claims~\ref{star} and~\ref{path}.

There is a star-like partition of $K_n$ with target components $B(A_4,A_1)$, $B(A_4,A_2)$, and $B(A_4, A_3)$ of colours {red}, {blue}, and {green} respectively if the following holds:
\begin{equation*} \label{eq:4star}
 |A_1| + |A_2| + |A_3| \leq |A_4| + r_1 + b_2 + g_3. \tag{A1}
\end{equation*}

There is a path-like partition of $K_n$ with target components $B(A_1,A_2)$, $B(A_2,A_4)$, and $B(A_4, A_3)$ of colours {green}, {blue}, and {green} respectively if the following holds:
\begin{align*}
|A_1|+|A_4| &\leq |A_2|+|A_3| + b_4 + g_4+ g_1, \label{eq:1243in1} \tag{B1}\\
|A_3|+|A_2| &\leq |A_1|+|A_4| + b_2 + g_2+ g_3, \label{eq:1243in2} \tag{B2}\\
|A_1| &< |A_2| + g_1, \\
|A_3| &< |A_4| + g_3.   
\end{align*}

There is a path-like partition of $K_n$ with target components $B(A_1,A_4)$, $B(A_4,A_3)$, and $B(A_3, A_2)$ of colours {red}, {green}, and {red} respectively if the following holds:
\begin{align*} 
|A_1|+|A_3| &\leq |A_2|+|A_4| + g_3 + r_3+ r_1, \\
|A_2|+|A_4| &\leq |A_1|+|A_3| + g_4 + r_4+ r_2, \label{eq:1432in2} \tag{C2}\\
|A_1| &< |A_4| + r_1, \\
|A_2| &< |A_3| + r_2. \\
\end{align*}

There is a path-like partition of $K_n$ with target components $B(A_1,A_3)$, $B(A_3,A_2)$, and $B(A_2, A_4)$ of colours {blue}, {red}, and {blue} respectively if the following holds:
\begin{align*}
|A_1|+|A_2| &\leq |A_3|+|A_4| + r_2 + b_2+ b_1, \\
|A_3|+|A_4| &\leq |A_1|+|A_2| + r_3 + b_3+ b_4, \label{eq:1324in2} \tag{D2}\\
|A_1| &< |A_3| + b_1, \\
|A_4| &< |A_2| + b_4. \label{eq:1324in4} \tag{D4}
\end{align*}

There is a path-like partition of $K_n$ with target components $B(A_1,A_4)$, $B(A_4,A_2)$, and $B(A_2, A_3)$ of colours {red}, {blue}, and {red} respectively if the following holds:
\begin{align*}
|A_1|+|A_2| &\leq |A_3|+|A_4| + b_2 + r_2+ r_1, \\
|A_3|+|A_4| &\leq |A_1|+|A_2| + b_4 + r_4+ r_3, \label{eq:1423in2} \tag{E2}\\
|A_1| &< |A_4| + r_1, \\
|A_3| &< |A_2| + r_3. \label{eq:1423in4} \tag{E4}
\end{align*}

There is a path-like partition of $K_n$ with target components $B(A_2,A_4)$, $B(A_4,A_1)$, and $B(A_1, A_3)$ of colours {blue}, {red}, and {blue} respectively if the following holds:
\begin{align*}
|A_1|+|A_2| &\leq |A_3|+|A_4| + r_1 + b_1+ b_2, \\
|A_3|+|A_4| &\leq |A_1|+|A_2| + r_4 + b_4+ b_3, \label{eq:2413in2} \tag{F2}\\
|A_2| &< |A_4| + b_2, \\
|A_3| &< |A_1| + b_3. \label{eq:2413in4} \tag{F4}
\end{align*}


Note that all the unlabeled inequalities hold as a consequence of (\ref{eq:order2}) and the positivity of $r_i$, $b_i$, and $g_i$.  Thus, to prove the theorem  it is sufficient to show that all the labeled inequalities corresponding to some particular letter A, B, C, D, E, or F hold.  
We split into two cases depending on whether (\ref{eq:1243in1}) holds or not.

\textbf{Case 1:}  Suppose that (\ref{eq:1243in1}) holds.

Note that the following cannot all be true at the same time:

\begin{align}
|A_3| + r_2 &> |A_4| + g_3, \label{eq:case1243} \\
|A_2| + b_4 &> |A_3| + r_2, \label{eq:case1432} \\
|A_4| + g_3 &> |A_2| + b_4. \label{eq:case1324} 
\end{align}
Indeed adding these three inequalities together gives $0>0$.  Thus the negation of (\ref{eq:case1243}), (\ref{eq:case1432}), or (\ref{eq:case1324}) must hold.

The negation of (\ref{eq:case1243}) implies (\ref{eq:1243in2}) which, together with our assumption that (\ref{eq:1243in1}) holds, implies that all the inequalities corresponding to the letter ``B'' hold.

The negation of (\ref{eq:case1432}) implies (\ref{eq:1432in2}) which implies that all the inequalities corresponding to the letter ``C'' hold.

The negation of (\ref{eq:case1324}), together with $|A_3|\leq r_3+b_3+g_3$ implies that (\ref{eq:1324in2}) holds.  The negation of (\ref{eq:case1324}), together with $g_3>0$ implies that (\ref{eq:1324in4}) holds. Therefore, all the inequalities corresponding to the letter ``D'' hold.  

\

\textbf{Case 2:}  Suppose that (\ref{eq:1243in1}) does not hold.  If (\ref{eq:1432in2}) holds, then all the inequalities labeled ``C'' hold, so we assume that the negation of (\ref{eq:1432in2}) holds. We consider three subcases depending on which of (\ref{eq:1423in4}) and (\ref{eq:2413in4}) hold.

\emph{Subcase 1:}  Suppose that (\ref{eq:1423in4}) holds.  If (\ref{eq:1423in2}) holds, then all the inequalities labeled ``E'' hold, so we assume that the negation of (\ref{eq:1423in2}) holds.  Adding the negations of (\ref{eq:1243in1}), (\ref{eq:1432in2}), and (\ref{eq:1423in2}) together, and using $|A_4| \leq r_4+b_4+g_4$ gives the following:
\begin{equation*} \label{eq:caseE}
|A_4|  > |A_1|+|A_2|+|A_3| + g_1+r_2+r_3.
\end{equation*}
This is stronger than (\ref{eq:4star}) which implies that all the inequalities corresponding to the letter ``A'' hold.

\

\emph{Subcase 2:}  Suppose that (\ref{eq:2413in4}) holds.  If (\ref{eq:2413in2}) holds, then all the inequalities labeled ``F'' hold, so we assume that the negation of (\ref{eq:2413in2}) holds.  Adding the negations of (\ref{eq:1243in1}), (\ref{eq:1432in2}), and (\ref{eq:2413in2}) together, and using $|A_4|\leq r_4+b_4+g_4$ gives the following:
\begin{equation*} \label{eq:caseF}
|A_4|  > |A_1|+|A_2|+|A_3| + g_1+r_2+b_3.
\end{equation*}
This is stronger than (\ref{eq:4star}) which implies that all the inequalities corresponding to the letter ``A'' hold.

\

\emph{Subcase 3:}  Suppose that neither (\ref{eq:1423in4}) or (\ref{eq:2413in4}) hold.  Adding the negations of (\ref{eq:1243in1}), (\ref{eq:1432in2}), (\ref{eq:1423in4}), and (\ref{eq:2413in4}) together, and using $|A_4|\leq r_4+b_4+g_4$ and $|A_3|\leq r_3+b_3+g_3$  gives the following:
\begin{equation*} \label{eq:caseNEF}
|A_4| + g_3 > |A_1|+|A_2|+|A_3| + g_1+r_2+g_4.
\end{equation*}
This is stronger than (\ref{eq:4star}) which implies that all the inequalities corresponding to the letter ``A'' hold.
\end{proof}

\section{Discussion}
Much of the research on partitioning coloured graphs has focused around Conjecture~\ref{Erdos}.  Given the disproof of this conjecture, we will spend the remainder of this paper discussing possible directions for further work.

Although we only constructed counterexamples to Conjecture~\ref{Erdos} for particular $n$ in Section 2 of this paper, it is easy to generalize our construction to work for all $n\geq N_r$, where $N_r$ is a number depending on $r$.  To see this, one only needs to replace the assumption of ``$m$ is an integer"  with ``$m$ is a real number" in Section 2, and replace expressions where $m$ appears with suitably chosen integral parts.  Doing this and choosing $m$ appropriately will produce $r$-colourings of $K_n$ which cannot be partitioned into $r$ monochromatic cycles for all sufficiently large $n$.

A weakening of Conjecture~\ref{Erdos} is the following approximate version.
\begin{conjecture}\label{approximate}
For each $r$ there is a constant $c_r$, such that in every $r$-edge coloured complete graph $K_n$, there are $r$ vertex-disjoint monochromatic cycles covering $n - c_r$ vertices in $K_n$.  
\end{conjecture}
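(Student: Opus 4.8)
The plan is to attack the conjecture by upgrading the approximate results already available — for $r=3$, Theorem~\ref{threecycles}, which leaves $o(n)$ vertices uncovered — to a bound with only a \emph{bounded} leftover, via an absorption argument combined with the path-partition machinery of this paper. The rough scheme is: (1) reserve a small ``absorbing'' gadget $Z$ of $O_r(1)$ vertices inside a robust colour class; (2) on $K_n\setminus Z$, produce $r$ vertex-disjoint monochromatic cycles (or near-cycles) covering all but $o(n)$ vertices, using the structure extracted in the proof of Theorem~\ref{threecycles}; (3) absorb the $o(n)$ uncovered vertices, together with most of $Z$, back into the cycles, leaving only a constant-sized remainder.

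For the $r=3$ case one can be concrete using the dichotomy of Theorems~\ref{not4partitecase} and~\ref{4partitecase}. If the colouring is not $4$-partite, Theorem~\ref{not4partitecase} partitions $K_n$ into three monochromatic paths of distinct colours; I would then try to close each path into a cycle of the same colour, trimming a bounded number of end-vertices when the endpoints cannot be joined in that colour, and dumping the trimmed vertices into the constant leftover (or re-threading them through the absorber). If the colouring \emph{is} $4$-partite, with classes $A_1,\dots,A_4$, the between-class edges form rigid complete bipartite graphs, so one can run long monochromatic cycles between pairs of classes — a cyclic version of the path constructions in Claims~\ref{star} and~\ref{path} — and clean up the residue inside each $A_i$ by induction on $n$, exactly as in the proof of Theorem~\ref{4partitecase}.

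I expect the principal obstacle to be the interaction between path-to-cycle conversion and the genuine, irreducible deficiency exhibited by the blow-up colourings $J_r^m$ of Section~2. In those examples there really is a vertex that no cover by $r$ monochromatic cycles can reach, so any argument must tolerate an honest constant deficit rather than merely $o(n)$ slack; this means one cannot avoid proving a stability form of Theorem~\ref{counterexampletheorem}, namely that a colouring which is far from all the extremal ``$J_r^m$-like'' colourings admits a cover of \emph{all} $n$ vertices. A second difficulty is constructing absorbers uniformly: near the extremal colourings the colour class in which one wants to build the absorber is smallest and most constrained, and one must check that a bounded absorber still exists there — this is really the step that decides whether the constant $c_r$ claimed by the conjecture can be made to depend only on $r$. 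For general $r$ one additionally needs an $o(n)$-leftover analogue of Theorem~\ref{threecycles}, which is not currently available in the literature, so step (2) of the scheme is not yet on firm footing beyond $r=3$.
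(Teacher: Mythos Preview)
The statement you are addressing is Conjecture~\ref{approximate}, which the paper explicitly presents as \emph{open}: immediately after stating it the authors write ``This conjecture is open for $r\geq 3$.'' There is no proof in the paper to compare your proposal against.

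What you have written is not a proof but a research outline, and you are candid about this: you note that step~(2) relies on an $o(n)$-leftover result that does not exist for $r\geq 4$, that step~(3) requires building absorbers whose existence you have not established, and that handling the extremal $J_r^m$-type colourings would demand a stability theorem you have not proved. These are not minor technicalities; each is a substantial obstacle, and the absorber construction in particular has no support anywhere in the paper's methods. Even for $r=3$, the step ``close each path into a cycle of the same colour, trimming a bounded number of end-vertices'' is where the real difficulty lies --- Theorem~\ref{threecycles} already achieves $o(n)$ leftover using regularity, and improving $o(n)$ to $O(1)$ is precisely the content of the conjecture, not something that follows from having path partitions. Your sketch does not supply a mechanism for this improvement.

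In short: the paper contains no proof of this statement, and your proposal, while a reasonable description of how one might \emph{begin} thinking about the problem, does not constitute a proof either.
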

This conjecture is open for $r\geq 3$.  For $r=3$, Theorem~\ref{threecycles} shows that a version of Conjecture~\ref{approximate} is true with $c_r$ replaced with a function $o_r(n)$ satisfying $\frac{o_r(n)}{n}\to 0$ as $n\to \infty$.

Another way to weaken Conjecture~\ref{Erdos} is to remove the constraint that the cycles covering $K_n$ are disjoint.
\begin{conjecture}\label{covering}
Suppose that the edges of $K_n$ are coloured with $r$ colours.  There are $r$ (not necessarily disjoint) monochromatic cycles covering all the vertices in $K_n$.  
\end{conjecture}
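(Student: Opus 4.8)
The plan is to attack Conjecture~\ref{covering} by building on the path-partition results of this paper, with the first open case $r=3$ as the realistic target. For $r=2$ the statement is already known: Bessy and Thomass\'e~\cite{Thomasse} partition every $2$-edge coloured $K_n$ into two monochromatic cycles of different colours, and a partition is in particular a cover (Gy\'arf\'as~\cite{Gyarfas2} had earlier shown the covering version directly).

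For $r=3$, I would start from Theorem~\ref{threepaths}, which partitions $K_n$ into three monochromatic paths $P_1,P_2,P_3$, and try to replace each $P_i$ by a monochromatic cycle covering $V(P_i)$. Since single vertices and single edges count as cycles, the only obstruction is a path $P=v_1v_2\cdots v_k$ of colour $c$ with $k\geq 3$. If the edge $v_1v_k$ has colour $c$, then $P$ together with this edge is the required cycle. Otherwise one looks for a vertex $w$ --- \emph{possibly one already covered by another of the three cycles}, which is exactly where the relaxation to non-disjoint cycles is used --- with $v_1w$ and $wv_k$ both of colour $c$; then $v_1v_2\cdots v_kw$ is a monochromatic cycle covering $V(P)$. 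So the task reduces to arranging the path partition so that all three closures succeed simultaneously.

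To do this I would prove a strengthening of Theorem~\ref{threepaths} that controls the endpoints of the three paths, in the same spirit as the refinements of Lemma~\ref{path-bipartitecoloured} used in Section~3.1 and using the split/$4$-partite dichotomy of Theorems~\ref{not4partitecase} and~\ref{4partitecase}: one wants each monochromatic path of colour $c$ to have both endpoints in a common colour-$c$ component, and ideally a common colour-$c$ neighbour there, chosen so that the three closures do not interfere. The hard part --- and the reason this is only a plan --- is that a monochromatic path can genuinely resist becoming a monochromatic cycle: its endpoints may see the rest of $K_n$ entirely in the other two colours, so neither closing the path directly nor attaching an external vertex works. Such configurations would have to be re-routed (trading the stubborn path for cheaper monochromatic pieces absorbed into the other two cycles) or eliminated by a case analysis of the few bad endpoint configurations that the $4$-partite structure permits. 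For general $r$ there is no known lossless reduction --- the naive induction that deletes a monochromatic cycle in colour $r$ loses a logarithmic factor, which is why Conjecture~\ref{covering} is open for every $r\geq 3$ --- so settling $r=3$ by this route would already be the main goal.
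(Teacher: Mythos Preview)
This statement is a \emph{conjecture} in the paper, not a theorem; the paper offers no proof of it and explicitly presents it as open for all $r\geq 3$. So there is no ``paper's own proof'' to compare your attempt against. Your proposal is honest about this --- you call it a plan and identify the obstruction yourself --- so there is no error to flag, only the observation that the comparison you were asked for is vacuous.

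That said, your plan is a reasonable line of attack and is broadly consistent with the remarks the paper does make after stating the conjecture. The paper notes that for $r=3$, Theorem~\ref{not4partitecase} already gives three (not necessarily disjoint) monochromatic \emph{paths} of different colours covering $K_n$: if the colouring is not $4$-partite this is immediate, and if it is $4$-partite one can take overlapping Hamiltonian paths in the three bipartite pieces $B(A_i,A_j)$. Your idea of closing each path into a cycle by borrowing a vertex from elsewhere is the natural next step, and you correctly isolate the genuine difficulty: a monochromatic path whose two endpoints have no common neighbour in that colour cannot be closed this way, and nothing in Theorems~\ref{threepaths} or~\ref{not4partitecase} currently controls the endpoints well enough to rule this out. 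Resolving that obstruction is exactly the content of the open problem.
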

A weaker version of this conjecture where ``cycles" is replaced with ``paths" has appeared in~\cite{Gyarfas}.  Our method of finding counterexamples to Conjecture~\ref{Erdos} relied on first finding graphs which cannot be partitionined into $r$ monochromatic paths of different colours.  For $r=3$, using Theorem~\ref{not4partitecase}, it is easy to show that every 3-coloured complete graph can be covered by three (not necessarily disjoint) paths of different colours.  Therefore for $r=3$, it is unlikely that something similar to our constructions in Section~2 can produce counterexamples to Conjecture~\ref{covering}.  It is even possible that, for all $r$, one can ask for the cycles in Conjecture~\ref{covering} to have different colours.

As mentioned in the introduction, it is interesting to consider partitions of an edge coloured graph $G$ other than the complete graph.  Theorems~\ref{GyarfasLehel} and~\ref{bipartitepartition} are results in this direction when when $G$ is a balanced complete bipartite graph.  We make the following conjecture which would generalise Corollary~\ref{bipartitepartitionsimple}.
\begin{conjecture}\label{bipartiteconjecture}
Suppose that the edges of $K_{n,n}$ are coloured with $r$ colours.  There is a vertex-partition of $K_{n,n}$ into $2r-1$  monochromatic paths.  
\end{conjecture}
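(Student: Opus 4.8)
The plan is to prove the conjecture by induction on $r$. The base case $r=1$ holds because a balanced complete bipartite graph has a Hamilton path, and the case $r=2$ is exactly Corollary~\ref{bipartitepartitionsimple}. For the inductive step the key would be a \emph{peeling lemma}: for every $r$-edge coloured $K_{n,n}$ and every colour $c$, there is a vertex-partition of $K_{n,n}$ into two monochromatic paths of colour $c$ together with one balanced complete bipartite graph $B(S,T)$ (with $S\subseteq X$, $T\subseteq Y$, $|S|=|T|$) none of whose edges have colour $c$. Granting this, one lumps colours $2,\dots,r$ together, applies the peeling lemma with $c=1$, and then applies the induction hypothesis to the genuinely $(r-1)$-coloured graph $B(S,T)$; this uses $2+(2(r-1)-1)=2r-1$ monochromatic paths in total.

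To prove the peeling lemma one would mirror the case analysis behind Theorem~\ref{bipartitepartition}. If the colour-$c$ class of $K_{n,n}$ is connected, apply Lemma~\ref{path-bipartiteconnected} to it. In case (i) we are immediately done, using one colour-$c$ path and taking $B(S,T)$ empty. In case (ii) we obtain a colour-$c$ path $P$ and three sets $A,B_1,B_2$ with $|A|=|B_1|+|B_2|$ and no colour-$c$ edge joining two of them; as in the proof of Theorem~\ref{bipartitepartition}, a parity argument (using that $P$ alternates between $X$ and $Y$) gives $|A\cap X|=|B_1\cap Y|+|B_2\cap Y|$ and $|A\cap Y|=|B_1\cap X|+|B_2\cap X|$, so the non-colour-$c$ edges organise $A\cup B_1\cup B_2$ into two \emph{balanced} complete bipartite graphs on $H=(A\cap X)\cup(B_1\cap Y)\cup(B_2\cap Y)$ and $K=(A\cap Y)\cup(B_1\cap X)\cup(B_2\cap X)$. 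If the colour-$c$ class is disconnected, Lemma~\ref{path-bipartite} applied to it again produces a colour-$c$ path together with balanced complete bipartite remainders; and if instead some vertex is monochromatic or the residual colouring is split, this is handled as in the proof of Theorem~\ref{bipartitepartition}, with Lemma~\ref{splitthreepaths} supplying, for a split piece, two colour-$c$ paths plus a single balanced complete bipartite graph in the remaining colours.

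The main obstacle is the bookkeeping: the connected case above naturally leaves \emph{two} balanced complete bipartite remainders $H$ and $K$ (together with a single colour-$c$ path), and applying the induction hypothesis to both would cost $1+2(2(r-1)-1)=4r-5$ paths, which exceeds $2r-1$ once $r\geq 3$. So the crux is to merge $H$ and $K$ into one balanced complete bipartite graph avoiding colour $c$ — which is genuinely delicate, since the edges of $K_{n,n}$ within $B_1$ (or within $B_2$) joining $H$ to $K$ need not avoid colour $c$ — or, failing a clean merge, to redistribute the colour-$c$ path so that the two remainders get linked. The only freedom available is the choice of $P$ in Lemma~\ref{path-bipartiteconnected}, i.e.\ the balance between $|H\cap X|$, $|H\cap Y|$ and so on. If this cannot be pushed through directly, the natural fallback is to strengthen the inductive statement, proving for a wider class of $r$-coloured bipartite graphs (for instance disjoint unions of a bounded number of balanced complete bipartite graphs) a partition bound that degrades gracefully, so that the two remainders are absorbed within one application of the hypothesis. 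Making this robust is, presumably, exactly why the statement is only conjectured.
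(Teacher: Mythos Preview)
The statement you are attempting to prove is presented in the paper as an open \emph{conjecture} in the Discussion section; the paper gives no proof of it. So there is no ``paper's own proof'' to compare your proposal against. The paper only remarks that the bound $2r-1$ would be best possible, sketches a construction showing $2r-2$ paths do not always suffice, and cites Haxell's $O((r\log r)^2)$ upper bound as the best known.

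Your proposal is not a proof either, and to your credit you say so explicitly. The entire argument rests on the ``peeling lemma'' (two colour-$c$ paths plus a single balanced complete bipartite remainder avoiding colour $c$), but you do not establish it: in the connected case you end up with \emph{two} balanced remainders $H$ and $K$, and you correctly observe that recursing on both gives $4r-5$ paths, which is too many for $r\ge 3$. The disconnected case has a similar issue that you gloss over: applying Lemma~\ref{path-bipartite} to the colour-$c$ class yields sets $A,B$ with $|A|=|B|$ and no colour-$c$ edges between them, but $A$ and $B$ need not respect the bipartition $X,Y$ of $K_{n,n}$, so the non-colour-$c$ edges between $A$ and $B$ do not form a single balanced complete bipartite graph either---they again split into two pieces $B(A\cap X,\,B\cap Y)$ and $B(A\cap Y,\,B\cap X)$. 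Thus the peeling lemma as stated is unproven in both branches of your case analysis, and this is the genuine gap. Your closing sentence is accurate: resolving this merge-or-redistribute problem is precisely what stands between the outline and a proof, and the paper leaves it open.
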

This conjecture would be optimal, since for all $r$, there exist $r$-coloured balanced complete bipartite graphs which cannot be partitioned into $2r-2$ monochromatic paths.  
We sketch one such construction here.  Let $X$ and $Y$ be the classes of the bipartition of a balanced complete bipartite graph.  We partition $X$ into $X_1,\dots, X_r$ and $Y$ into $Y_1,\dots, Y_r$ where $|X_i|=10^i+i$ and $|Y_i|=10^i+r-i$.  The edges between $X_i$ and $Y_j$ are coloured with colour $i+j\pmod{r}$.  It is possible to show that this graph cannot be partitioned into $2r-2$ monochromatic paths.
In~\cite{Haxell}, Haxell showed that every $r$-edge coloured balanced complete bipartite graph can be partitioned into $O((r\log{r})^2)$ monochromatic cycles, which is the best known upper bound to how many paths are needed in Conjecture~\ref{bipartiteconjecture}.



\bigskip\noindent
\textbf{Acknowledgment}

\smallskip\noindent
The author would like to thank his supervisors Jan van den Heuvel and Jozef Skokan for their advice and discussions.

\bibliography{pathpartition}

\begin{thebibliography}{10}

\bibitem{Allen}
P.~Allen.
\newblock Covering two-edge-coloured complete graphs with two disjoint
  monochromatic cycles.
\newblock {\em Combin. Probab. Comput.}, 17(4):471--486, 2008.

\bibitem{Ayel}
J.~Ayel.
\newblock Sur l'existence de deux cycles suppl\'ementaires unicolores,
  disjoints et de couleurs diff\'erentes dans un graphe complet bicolore.
\newblock {\em Th\'ese de l'universit\'e de Grenoble}, 1979.

\bibitem{Balogh}
J.~Balogh, J.~Bar\'at, D.~Gerbner, A.~Gy\'arf\'as, and G.~S\'ark\"ozy.
\newblock Partitioning edge-2-colored graphs by monochromatic paths and cycles.
\newblock {\em preprint}, 2012.

\bibitem{Thomasse}
S.~Bessy and S.~Thomass\'e.
\newblock Partitioning a graph into a cycle and an anticycle, a proof of
  {L}ehel's conjecture.
\newblock {\em J. Combin. Theory Ser. B}, 100(2):176--180, 2010.

\bibitem{Diestel}
R.~Diestel.
\newblock {\em Graph Theory}.
\newblock Springer-Verlag, 2000.

\bibitem{Erdos}
P.~Erd\H{o}s, A.~Gy\'arf\'as, and L.~Pyber.
\newblock Vertex coverings by monochromatic cycles and trees.
\newblock {\em J. Combin. Theory Ser. B}, 51(1):90--95, 1991.

\bibitem{Gerencser}
L.~Gerencs\'er and A.~Gy\'arf\'as.
\newblock On {R}amsey-type problems.
\newblock {\em Ann. Univ. Sci. Budapest. E\"otv\"os Sect. Math}, 10:167--170,
  1967.

\bibitem{Gyarfas2}
A.~Gy\'arf\'as.
\newblock Vertex coverings by monochromatic paths and cycles.
\newblock {\em J. Graph Theory}, 7:131--135, 1983.

\bibitem{Gyarfas}
A.~Gy\'arf\'as.
\newblock Covering complete graphs by monochromatic paths.
\newblock In {\em Irregularities of Partitions, Algorithms and Combinatorics},
  volume~8, pages 89--91. Springer-Verlag, 1989.

\bibitem{Lehel}
A.~Gy\'arf\'as and J.~Lehel.
\newblock A {R}amsey-type problem in directed and bipartite graphs.
\newblock {\em Pereodica Math. Hung.}, 3:299--304, 1973.

\bibitem{Szemeredi3}
A.~Gy\'arf\'as, M.~Ruszink\'o, G.~S\'ark\"ozy, and E.~Szemer\'edi.
\newblock An improved bound for the monochromatic cycle partition number.
\newblock {\em J. Combin. Theory Ser. B}, 96(6):855--873, 2006.

\bibitem{Szemeredi1}
A.~Gy\'arf\'as, M.~Ruszink\'o, G.~S\'ark\"ozy, and E.~Szemer\'edi.
\newblock Partitioning 3-colored complete graphs into three monochromatic
  cycles.
\newblock {\em Electron. J. Combin.}, 18(1), 2011.

\bibitem{Haxell}
P.~Haxell.
\newblock Partitioning complete bipartite graphs by monochromatic cycle.
\newblock {\em J. Combin. Theory Ser. B}, 69:210--218, 1997.

\bibitem{Szemeredi2}
T.~\L{uczak}, V.~R\"odl, and E.~Szemer\'edi.
\newblock Partitioning two-colored complete graphs into two monochromatic
  cycles.
\newblock {\em Combin. Probab. Comput.}, 7:423--436, 1998.

\bibitem{Sarkozy}
G.~S\'ark\"ozy.
\newblock Monochromatic cycle partitions of edge-colored graphs.
\newblock {\em J. Graph Theory}, 66:57--64, 2011.

\end{thebibliography}
\bibliographystyle{abbrv}
\end{document}